\newcommand{\lrb}[1]{\left(#1\right)}
\title{Eisenstein--type series associated to partition ranks}
\author[K. Bringmann]{Kathrin Bringmann}
\author[B. Pandey]{Badri Vishal Pandey}
\address{Department of Mathematics and Computer Science\\Division of Mathematics\\University of Cologne\\ Weyertal 86-90 \\ 50931 Cologne \\Germany}
\email{kbringma@math.uni-koeln.de}
\email{bpandey@uni-koeln.de, badrivishal9451@gmail.com}
\author{Jan-Willem M. van Ittersum}
\address{Department of Mathematics and Computer Science, University of Cologne,
	Weyertal 86-90, 50931 Cologne, Germany}
\curraddr{Korteweg--de Vries Institute for Mathematics, University of Amsterdam, Postbus 94248, 1090 GE  Amsterdam, The Netherlands}
\email{j.w.m.vanittersum@uva.nl}
\DeclareMathOperator{\sgn}{sgn}
\newcommand{\N}{\mathbb{N}}
\newcommand{\Z}{\mathbb{Z}}
\renewcommand{\c}{\mathbb{C}}
\newcommand{\h}{\mathbb{H}}
\renewcommand{\t}{\tau}
\newcommand{\z}{\zeta}
\theoremstyle{plain}
\newtheorem{thm}{Theorem}[section]
\newtheorem{lem}[thm]{Lemma}
\newtheorem{cor}[thm]{Corollary}
\newtheorem{prop}[thm]{Proposition}
\newtheorem{rem}[thm]{Remark}
\newtheorem*{rem*}{Remark}
\theoremstyle{definition}
\newcommand{\Pmod}[1]{\ \, ( \mathrm{mod} \, #1 )}
\numberwithin{equation}{section}
\renewcommand{\pmod}[1]{\ \left( \mathrm{mod} \, #1 \right)}
\renewcommand{\=}{\: =\: }
\subjclass[2020]{11F03, 11F11, 11F37, 11F50, 11P82}
\keywords{completions, holomorphic anomaly equations, (mock) Jacobi forms, partition traces, ranks.}
\begin{document}
\begin{abstract} 
In this paper, we introduce a class of functions that behave like classical Eisenstein series in many ways, but with a key distinction: only their non-holomorphic completions transform like (quasi)modular forms. We show how the partition rank generating function can be expressed in terms of partition traces of these functions. A key feature of our construction is that the completions satisfy a holomorphic anomaly equation--a phenomenon typically seen in the context of quantum field theory and string theory. We also show that the Fourier coefficients of these Eisenstein-type series are integral.
\end{abstract}
\maketitle

\section{Introduction and statement of results}
In this paper, we introduce a new class of Eisenstein-type series.\hspace{.5em}These series share several structural features with classical Eisenstein series, but differ in a crucial respect: only their non-holomorphic completions transform as (quasi)modular forms. To motivate their definition, we first explore their connection to the rank generating function, before examining their further properties in detail.

\subsection{Background}
A {\it partition} of~$n\in\N_0$ is a weakly decreasing sequence of positive integers that sum to $n$. We denote by $p(n)$ the number of partitions of $n$.
Recall the famous Ramanujan congruences
\begin{equation*}
	p(5n+4) \equiv 0\Pmod5,\quad p(7n+5) \equiv 0\Pmod7,\quad p(11n+6) \equiv 0\Pmod{11}.
\end{equation*}
To explain the first two, Dyson \cite{Dys} introduced the {\it rank} of a partition $\lambda$, which is defined as
\[
	\mathrm{rank}(\lambda) := \text{largest part of }\lambda - \text{number of parts of }\lambda.
\]
Dyson conjectured that reducing the rank (mod $5$) (resp.\ $7$) divides the partitions of $5n+4$ (resp.\ $7n+5$) into $5$ (resp.\ $7$) sets of equal size. This conjecture was proven by Atkin and Swinnerton-Dyer \cite{AS}. In the same paper, Dyson also conjectured the existence of another statistic, which he called the ``crank'' and which should explain all three partition congruences. Garvan \cite{Ga} found a crank for vector partitions and Andrews--Garvan \cite{AG88} defined a crank for ordinary partitions. Letting $o(\lambda)$ denote the number of ones in a partition~$\lambda$, and $\mu(\lambda)$ the number of parts strictly larger than $o(\lambda)$, the {\it crank} is defined as
\[
\mathrm{crank}(\lambda) :=
\begin{cases}
	\text{largest part of }\lambda & \text{if }o(\lambda)=0,\\
	\mu(\lambda)-o(\lambda) & \text{if }o(\lambda)>0.
\end{cases}
\]

Let $N(m,n)$ denote the number of partitions of $n$ with rank $m$. Its generating function is
\begin{equation}\label{eq:rank}	
R(\zeta;q):= 
 \sum_{\substack{n\ge0\\m\in\Z}} N(m,n) \z^m q^n = \sum_{n\ge0} \frac{q^{n^2}}{(\z q)_n\lrb{\z^{-1}q}_n}, 
\end{equation}
(see \cite{AS}) where $(a)_n=(a;q)_n:=\prod_{j=0}^{n-1} (1-aq^j)$ for $n\in\N_0\cup\{\infty\}$. Let $M(m,n)$ denote the number of partitions of $n$ with crank $m$, except for\footnote{The correct combinatorial values for the anomalous case $n=1$ are $1$ if $m=0$ and $0$ for $m\neq 0$.} $n = 1$ where $M(-1,1) = -M(0,1) = M(1,1) := 1$ as given by the following generating function \cite{AG88}
\begin{equation}\label{eq:crank}
C(\zeta;q):= \sum_{\substack{n\geq 0\\m\in \mathbb{Z}}} M(m,n) \zeta^m q^n = \frac{(q)_\infty}{(\z q)_\infty\lrb{\z^{-1}q}_\infty}.
\end{equation}
Modularity properties of the rank and of the crank generating function differ significantly: the crank generating function is basically a meromorphic Jacobi form, whereas the rank generating function is a ``mock Jacobi form'' \cite{BGM}: it only transforms like a Jacobi form after adding a non-holomorphic term (see Subsection~\ref{S:Completion} for the precise transformations of the rank generating function). We next consider the {\it crank moments} \cite{AG}
\[
	C_{k}(q) := \sum_{\substack{n\geq 0\\m\in \Z}} m^{k} M(m,n) q^n.
\]
These moments can be written in terms of quasimodular forms. In \cite{Traces}, the authors expressed these as a so-called partition Eisenstein trace. Here, for a sequence of functions $h=\{h_k\}_{k\in\N}$, define, for $n\in\N$, the {\it$n$-th partition trace} with respect to $h$ and a function~$\phi$ on partitions as 
\begin{equation*}
	\mathrm{Tr}_n(\phi,h;\t):=\sum_{\lambda\vdash n} \phi(\lambda)h_\lambda(\t),
\end{equation*}
where the sum ranges over all partitions of $n$ and for $\lambda=(1^{m_1}, 2^{m_2}, \ldots, n^{m_n})\vdash n$, we set
\begin{align*}
	h_\lambda(\t) := \prod_{j=1}^n h_j^{m_j}(\t).
\end{align*}
{\it Partition Eisenstein traces} are the partition traces with respect to the sequence of Eisenstein series $G=\{G_k\}_{k\in\N}$. Here, $G_{k}$ is the {\it Eisenstein series}\footnote{In \cite{Traces} the authors wrote $G_{k}$ for what here is $2G_{k}$ and $\mathrm{Tr}_k(\phi;\tau)$ for what is $\mathrm{Tr}_{2k}(\phi,G;\tau)$ here.} {\it of weight $k\in2\N$}, given by
	\begin{equation*}
		G_{k}(\tau) := -\frac{B_{k}}{2k} + \sum_{n,m\geq 1} m^{k-1} q^{nm} \qquad (q:=e^{2\pi i \tau},\, \tau\in\h:=\{w\in\mathbb C:\operatorname{Im}(w)>0\}),
	\end{equation*}
with $B_k$ the $k$-th Bernoulli number. By convention, $G_{k}:=0$ for $k$ odd. Moreover, define
	\begin{equation}\label{eq:phi-lambda}
		\phi(\lambda):=\prod_{j=1}^k \frac{2^{m_j}}{m_j!j!^{m_j}}.
	\end{equation}
The following result was obtained in {\cite[Theorem~1.2]{Traces}}.
\begin{thm}\label{thm:traces}
	We have
	\begin{align*}
		\sum_{k\geq 0} C_{k}(q) \frac{z^{k}}{k!} &= \frac{2\sinh\!\left(\frac{z}{2}\right)}{z(q)_\infty} \sum_{k\geq 0} \mathrm{Tr}_k(\phi,G;\t) z^{k}.
	\end{align*}
\end{thm}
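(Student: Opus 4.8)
The plan is to show that the left-hand side is nothing but the crank generating function evaluated at $\z=e^z$, and then to match the right-hand side against it. For the left-hand side, interchanging summation and inserting the exponential series gives the identity of formal power series in $z$ (with coefficients in the ring of $q$-series)
\[
\sum_{k\geq 0}C_{k}(q)\frac{z^{k}}{k!} = \sum_{n\geq 0,\,m\in\Z}M(m,n)q^n\sum_{k\geq 0}\frac{(mz)^k}{k!} = \sum_{n\geq 0,\,m\in\Z}M(m,n)e^{mz}q^n = C(e^z;q).
\]
By the product formula \eqref{eq:crank} this equals $\frac{(q)_\infty}{(e^zq)_\infty(e^{-z}q)_\infty}$; note that the anomalous values at $n=1$ are exactly those for which \eqref{eq:crank} holds, so no separate treatment is required. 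It therefore suffices to prove
\[
\frac{(q)_\infty}{(e^z q)_\infty(e^{-z}q)_\infty} = \frac{2\sinh\!\lrb{\frac z2}}{z(q)_\infty}\sum_{k\geq 0}\mathrm{Tr}_k(\phi,G;\t)z^k.
\]

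Next I would compute the generating function of the partition Eisenstein traces. Writing a partition $\lambda$ as its multiplicity vector $(m_1,m_2,\ldots)$ with $\sum_j jm_j=k$, the weight $\phi(\lambda)=\prod_j\frac{2^{m_j}}{m_j!\,j!^{m_j}}$ and $G_\lambda=\prod_jG_j^{m_j}$ are both multiplicative over parts, so the double sum factors as an Euler product in which each factor is an exponential series:
\[
\sum_{k\geq 0}\mathrm{Tr}_k(\phi,G;\t)z^k = \prod_{j\geq 1}\sum_{m_j\geq 0}\frac{1}{m_j!}\lrb{\frac{2G_j z^j}{j!}}^{m_j} = \exp\lrb{\sum_{j\geq 1}\frac{2G_j z^j}{j!}}.
\]
Taking logarithms, the desired identity reduces to
\[
\sum_{j\geq 1}\frac{2G_j z^j}{j!} = \log\frac{z}{2\sinh\!\lrb{\frac z2}} + 2\log(q)_\infty - \log(e^z q)_\infty - \log(e^{-z}q)_\infty.
\]

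Finally I would verify this logarithmic identity by separating the $q$-dependent and constant contributions. Expanding each logarithm of an infinite product via $\log(1-xq^n)=-\sum_{m\geq 1}x^mq^{nm}/m$ and using $e^{zm}+e^{-zm}-2=\sum_{k\geq 2,\,\mathrm{even}}2z^km^k/k!$, the combination $2\log(q)_\infty-\log(e^zq)_\infty-\log(e^{-z}q)_\infty$ collapses to $\sum_{k}\frac{2z^k}{k!}\sum_{n,m\geq 1}m^{k-1}q^{nm}$. Since $\sum_{n,m\geq 1}m^{k-1}q^{nm}=G_k+\frac{B_k}{2k}$ by the definition of $G_k$, this equals $\sum_{j\geq 1}\frac{2G_j z^j}{j!}+\sum_{k\geq 2,\,\mathrm{even}}\frac{B_kz^k}{k\,k!}$, and the remaining Bernoulli series is cancelled exactly by the prefactor through the classical expansion $\log\frac{z}{2\sinh(z/2)}=-\sum_{k\geq 1}\frac{B_{2k}}{2k\,(2k)!}z^{2k}$, which follows from $\frac12\coth\!\lrb{\frac z2}=\frac1z+\sum_{k\geq 1}\frac{B_{2k}}{(2k)!}z^{2k-1}$ upon integration and fixing the constant at $z=0$. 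I expect the only genuine obstacle to be bookkeeping: keeping the constant terms $-\frac{B_k}{2k}$ of the Eisenstein series aligned with the $\frac{2\sinh(z/2)}{z}$ prefactor, since it is precisely this prefactor that absorbs the Bernoulli constants and forces the two sides to agree. Once the Bernoulli expansion is in hand, matching coefficients of $z^k$ completes the proof.
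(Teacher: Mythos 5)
Your argument is correct and is essentially the proof the paper has in mind: the paper quotes this theorem from the literature, but the ingredients it assembles for it — Lemma~\ref{lem:cranck} (cited from Zagier) together with the P\'olya cycle index identity of Lemma~\ref{lem:cycle-index} and the substitution $z\mapsto \frac{z}{2\pi i}$ — combine into exactly your computation, and this is also how the paper proves the rank analogue, Theorem~\ref{thm:main}. The only difference is that you verify the logarithmic identity behind Lemma~\ref{lem:cranck} directly (correctly handling the Bernoulli constants against the $\frac{2\sinh(z/2)}{z}$ prefactor, which the paper isolates as Lemma~\ref{lem:ExpB}) rather than citing it.
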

The Eisenstein series $G_k$ play a key role in the theory of modular forms and satisfy many interesting properties. They have a beautiful connection with Bernoulli numbers in that, for $k\ge2$, we have
\begin{align*}
	\lim_{\t\to i\infty} G_k(\tau) = -\frac{B_k}{2k}.
\end{align*}
For $k>2$, $G_k$ is a modular form of weight $k$ on $\mathrm{SL}_2(\Z)$ and $G_2$ is quasimodular (see Subsection~\ref{subsec:modularandquasi} for the definition). More precisely, we have for $\left(\begin{smallmatrix}
	a&b\\ c&d
\end{smallmatrix}\right)\in\mathrm{SL}_2(\Z)$,
\begin{equation}\label{eq:Gk}
				G_k\left(\frac{a\tau+b}{c\tau+d}\right) = \begin{cases}
		(c\tau+d)^{k} G_{k}(\tau)&\text{if $k\neq2$},\\
		(c\tau+d)^2G_2(\tau) + \frac{ic}{4\pi}(c\tau+d) & \text{if $k=2$}.
	\end{cases}
		\end{equation}
			 Another key property of the Eisenstein series is that the corresponding algebra $\mathbb{Q}[G_2,G_4,G_6,\ldots]$ of quasimodular forms is closed under the action of $D:=q\frac{\partial}{\partial q}$.

\subsection{Main results} A result like Theorem~\ref{thm:traces} involving the {\it rank moments}
\begin{equation}\label{eq:rankmoments}
	R_{k}(q) := \sum_{\substack{n\geq 0\\m\in \Z}} m^{k} N(m,n) q^n
\end{equation}
may give rise to an interesting generalization of the Eisenstein series. In this paper, we show that this is indeed the case. We obtain a surprisingly simple definition of Eisenstein-type series\footnote{We emphasize that these functions are not constructed by averaging, as is done for classical Eisenstein series. They are called Eisenstein-type because they satisfy properties analogous to those of classical Eisenstein series.} $g_\ell$ in \eqref{eq:gl} which seem not to be studied in the literature before. 
As mentioned above, the rank moments are related to mock modular forms. Hence, they do not admit an expansion as a trace of modular Eisenstein series like crank moments. As our first result, we write the rank moments as traces of the Eisenstein--type series $f_k$ defined in Subsection~\ref{Modularityandcompletion}. The aim of this paper is to understand these functions $f_k$. We call a real-analytic function $f^*(\tau,\overline\tau)$ a {\it quasi-completion of $f(\tau)$}, if $f^*(\tau,\overline\tau)$ transforms like a  quasimodular form (see \eqref{eq:qmftrans2}) and if\footnote{Here and throughout, we consider $\tau$ and $\overline\tau$ as independent variables.} $\lim_{\overline\tau\to-i\infty}$ $f^*(\tau,\overline\tau)=f(\tau)$. If a quasi-completion~$f^*(\tau,\overline\tau)$ transforms as a modular form, then we call $f^*(\tau,\overline\tau)$ a \emph{completion}. If it is clear from the context, then we also just write $f^*(\tau)$ instead of $f^*(\tau,\overline\tau)$.
Our first result is the following theorem.
\begin{thm}\label{thm:main}
	There exists a family of functions $f=\{f_{k}\}_{k\in\N}$
	such that
	\begin{align}\label{eq:R_k-f_k}
		\sum_{k\ge 0} R_{k}(q) \frac{z^{k}}{k!} &= \frac{2\sinh\!\lrb{\frac z2}}{z(q)_\infty}\sum_{k\ge0} \mathrm{Tr}_k(\phi,f;\t) z^{k},
	\end{align}
	where $\phi$ is defined in \eqref{eq:phi-lambda}
	and $f_{k}$ has the following properties:
	\begin{enumerate}[leftmargin=*, label={\normalfont(\arabic*)}]
		\item\label{it:i} For $k\ge2$, we have
		\begin{align*}
			\lim_{\tau\to i\infty} f_{k}(\tau) = -\frac{B_{k}}{2k}.
		\end{align*}
		\item\label{it:ii} The function $f_{k}$ has a quasi-completion\footnote{We consider quasi-completions, because they are more analogous to the functions in Theorem~\ref{thm:traces}. In the sequel, we also study the completions $\widehat{f_k}$ of $f_k$; see \eqref{Rs} and \eqref{eq:F-hat} for their definitions.} $f^*_{k}$ which satisfies, for $\left(\begin{smallmatrix}
			a&b\\ c&d
		\end{smallmatrix}\right)\in\mathrm{SL}_2(\Z)$,
		\begin{equation*}
				f^*_{k}\left(\frac{a\tau+b}{c\tau+d}\right) = \begin{cases}
		(c\tau+d)^{k} f^*_{k}(\tau)&\text{if $k\neq2$},\\
		(c\tau+d)^2f^*_2(\tau) + \frac{3ic}{4\pi}(c\tau+d)&\text{if $k=2$}.
	\end{cases}
		\end{equation*}
		\item\label{it:iii} The algebra $\mathcal{F}:=\mathbb{Q}[f_2,f_4,\ldots, G_2, G_4,\ldots]$ is closed under the action of $D$.
	\end{enumerate}
\end{thm}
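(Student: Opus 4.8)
The plan is to first reduce the generating identity \eqref{eq:R_k-f_k} to an explicit \emph{definition} of the $f_k$, and then verify the three listed properties against that definition. The key observation is that, since $\phi$ is multiplicative over the parts of a partition, the partition-trace generating series factors as an exponential:
\begin{equation*}
\sum_{k\ge0}\mathrm{Tr}_k(\phi,f;\t)\,z^{k}=\prod_{j\ge1}\exp\!\lrb{\frac{2f_j(\t)}{j!}z^{j}}=\exp\!\lrb{\sum_{j\ge1}\frac{2f_j(\t)}{j!}z^{j}}.
\end{equation*}
Setting $\z=e^{z}$ turns the left-hand side of \eqref{eq:R_k-f_k} into $R(e^{z};q)=\sum_{k\ge0}R_k(q)\tfrac{z^k}{k!}$, so \eqref{eq:R_k-f_k} is \emph{equivalent} to
\begin{equation*}
\sum_{j\ge1}\frac{2f_j(\t)}{j!}z^{j}=\Log\!\lrb{\frac{z(q)_\infty}{2\sinh(z/2)}R(e^{z};q)}=:\Log F(z;\t).
\end{equation*}
Since $\tfrac{z}{2\sinh(z/2)}\to1$ and $(q)_\infty R(1;q)=1$ as $z\to0$, we have $F(0;\t)=1$, so the logarithm is a well-defined power series in $z$ and this relation defines the $f_k$ uniquely. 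Thus \eqref{eq:R_k-f_k} holds by construction, and it remains to establish \ref{it:i}--\ref{it:iii}.

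The next step is to bring $F$ into closed form via the classical Appell--Lerch expansion $R(\z;q)=\tfrac{1-\z}{(q)_\infty}\sum_{n\in\Z}\tfrac{(-1)^n q^{n(3n+1)/2}}{1-\z q^{n}}$ of the rank generating function. Substituting $\z=e^{z}$ and using $1-e^{z}=-e^{z/2}\cdot 2\sinh(z/2)$ collapses the prefactors to
\begin{equation*}
F(z;\t)=-z\,e^{z/2}\sum_{n\in\Z}\frac{(-1)^n q^{n(3n+1)/2}}{1-e^{z}q^{n}}.
\end{equation*}
To prove \ref{it:i} I let $\t\to i\infty$, so $q\to0$ and only the $n=0$ term survives, giving $F(z;\t)\to \tfrac{z/2}{\sinh(z/2)}$. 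Hence $\lim_{\t\to i\infty}\Log F=-\sum_{k\ge1}\tfrac{B_{2k}}{2k\,(2k)!}z^{2k}$, and comparing $z^{k}$-coefficients yields $\lim_{\t\to i\infty}f_k=-\tfrac{B_k}{2k}$, the odd coefficients vanishing in accordance with $B_k=0$ for odd $k\ge3$ (this is exactly why the statement excludes $k=1$).

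For \ref{it:ii} I would transport the modularity of the rank function through the same normalization. Let $\widehat R$ denote the non-holomorphic completion recalled in Subsection~\ref{S:Completion}, which transforms like a Jacobi form, and set $\widehat F:=\tfrac{z(q)_\infty}{2\sinh(z/2)}\widehat R(e^{z};q)$ and $\sum_{j\ge1}\tfrac{2f^*_j}{j!}z^{j}:=\Log\widehat F$. Because the elliptic variable $z$ is inert and $(q)_\infty$ is (quasi)modular, the Jacobi transformation of $\widehat R$ forces each Taylor coefficient $f^*_k$ to transform as a (quasi)modular form of weight $k$, and the holomorphic limit recovers $f_k$, so $f^*_k$ is the desired quasi-completion. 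The delicate point---and the main obstacle---is tracking the anomaly through the $\Log$ and the $z$-expansion, in particular pinning down the constant $\tfrac{3ic}{4\pi}$ in weight $2$. I expect $f^*_2$ to equal $3G_2$ plus a (non-holomorphic) weight-$2$ modular form coming from the mock part of $\widehat R$, so that the anomaly $\tfrac{ic}{4\pi}$ of $G_2$ in \eqref{eq:Gk} appears with multiplicity three; verifying this requires expanding the completing (Eichler-integral/error-function) term of $\widehat R$ to second order in $z$ and isolating its $1/\operatorname{Im}(\t)$-contribution.

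Finally, for \ref{it:iii} the plan is to produce differential recursions. Applying $D$ to the defining relation gives
\begin{equation*}
\sum_{j\ge1}\frac{2(Df_j)}{j!}z^{j}=\frac{DF}{F}=D\Log(q)_\infty+D\Log R(e^{z};q),
\end{equation*}
where the first summand equals $-G_2-\tfrac1{24}\in\mathbb{Q}[G_2,G_4,\ldots]$ and is $z$-independent. Thus closure reduces to showing that the $z$-coefficients of $DR(e^{z};q)/R(e^{z};q)$ lie in $\mathcal F$. I would obtain this from the holomorphic anomaly equation satisfied by the completions $f^*_k$: the operator $\partial_{\ol\t}$ applied to $\widehat F$ is controlled by lower-weight data, and combining this with the standard commutation relations between $D$, the weight-raising operator, and $\partial_{\ol\t}$, together with Ramanujan's identities for $DG_k$, expresses each $Df_k$ as a polynomial in $f_2,f_4,\ldots,G_2,G_4,\ldots$. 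Establishing this anomaly equation and the resulting recursion is the technical heart of \ref{it:iii}, and I expect it, alongside the weight-$2$ anomaly computation in \ref{it:ii}, to be the most demanding part of the proof.
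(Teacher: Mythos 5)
Your reduction of \eqref{eq:R_k-f_k} to the definition $\sum_{j\ge1}\tfrac{2f_j(\tau)}{j!}z^j=\Log F(z;\tau)$, and your proof of property (1) via the Lerch sum (only the $n=0$ term survives as $q\to0$, then compare with the Bernoulli generating function as in Lemma~\ref{lem:ExpB}), coincide with the paper's argument and are fine. Property (2) is incomplete rather than wrong: you treat the completed rank function as a function of $z$ alone, but the Jacobi completion is real-analytic in $z$ (its error-function terms depend on $y/v$), so ``each Taylor coefficient $f_k^*$'' is undefined until you say how $\overline z$ is handled. The paper expands in both $z$ and $\overline z$ and keeps the $\overline z$-constant terms (the $f^*_{k,\ell}$ with $\ell=0$ in \eqref{eq:defRhat}), and the constant $\frac{3ic}{4\pi}$ does not come from the error-function terms at all but from the explicit factor $e^{12\pi^2G_2(\tau)z^2}$ in the definition \eqref{E:Rt} of $R^\#$ together with \eqref{eq:Gk}; you guess the ``multiplicity three'' correctly but supply no mechanism for it.

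The genuine gap is in property (3). You propose to deduce $D(f_k)\in\mathcal F$ from a holomorphic anomaly equation for the completions plus commutation relations, but this cannot close as stated: knowing the lowering-operator image and the weight of $\widehat f_k$ determines $D(f_k)$ at best up to a holomorphic modular form (the paper's remark after the theorem makes exactly this point--adding cusp forms preserves properties (1)--(3)), the computation of $L(\widehat f_k)$ is itself a nontrivial step whose output lies in $\sqrt{v}\,|\eta|^2\widehat{\mathcal F}$ rather than in $\widehat{\mathcal F}$ (part (3) of Theorem~\ref{thm:invariant-space}), so $\widehat{\mathcal F}$ is not closed under $L$ and the bootstrap you envision does not start. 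The missing input is the Atkin--Garvan rank-crank PDE (Lemma~\ref{thm:rank-crankPDE}): applying $H+6G_2$ to $\frac1{2\pi iz}\exp\bigl(2\sum_k f_k(\tau)\tfrac{(2\pi iz)^k}{k!}\bigr)$ and equating the result with $2\bigl(\tfrac{\zeta^{1/2}(q)_\infty C(\zeta;q)}{1-\zeta}\bigr)^3$, which by Lemma~\ref{lem:cranck} is an exponential in the $G_k$, produces the exact identity $D(f_k)=\tfrac{k!}{6}\mathrm{Tr}_{k+2}(\phi,3G-f;\tau)-\tfrac{k-1}{6(k+1)}f_{k+2}-\tfrac13\sum_{a=1}^{k-1}\binom{k}{a}f_{a+1}f_{k-a+1}$, and it is this formula that proves closure under $D$. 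Without the rank-crank PDE (or an equivalent identity tying the rank to the crank), your claim that the $z$-coefficients of $D R(e^z;q)/R(e^z;q)$ lie in $\mathcal F$ is unsupported.
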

\nopagebreak
\begin{rem*}\hspace{0cm}
	\begin{enumerate}[leftmargin=*, label={\rm(\arabic*)}]
	\item As for the Eisenstein series, we have $f_k=0$ if $k$ is odd; see Remark~\ref{rem:f_odd}.
	\item The functions $f_k$ and $f_k^*$ are of intrinsic interest, as they are a step up in complexity compared to quasimodular forms.
	\item The transformation of $f_k^*$ agrees with that of $G_k$ up to a factor of $3$ in front of $c\tau+d$. This is explained by the (mock) Jacobi forms underlying the rank and crank statistics: namely, for the rank the corresponding index is $-\frac{3}{2}$ and for the crank it is $-\frac{1}{2}$.
	\end{enumerate}
\end{rem*}

Even though the $f_k$ are uniquely determined in Theorem~\ref{thm:main}, properties (1), (2), and (3) itself do not determine them uniquely. For example, after adding a cusp form of weight $k$ to each $f_k$, the resulting functions still satisfy these properties. To describe the $f_k$ uniquely without involving \eqref{eq:R_k-f_k}, we give two recursive definitions for them involving divisor-like sums.
For this, we define $g_\ell\in \mathcal{F}$ by $g_0:=1$, $g_\ell:=0$ for $\ell\in\N$ odd, and, for $\ell\in\N$ even, 
\begin{equation}\label{eq:gl}
g_{\ell}(\tau) := \left(1-2^{\ell-1}\right)\frac{ B_{\ell}}{2\ell} + \sum_{\substack{2n-1\geq 3m\geq 3}} (2n-3m)^{\ell-1}  q^{nm}-\sum_{\substack{n-1\geq 6m\geq 6}}(n-6m)^{\ell-1}  q^{nm}.
\end{equation}
For $r\in\N$, the $r$-th Fourier coefficient of $g_\ell$ is a polynomial in some of the divisors of $r$: namely those positive divisors $n,m$ satisfying the inequality $2n-1\geq3m$ or $n-1\geq6m$. Hence, the $g_k$ are in spirit of the ``mock Eisenstein series'' studied by Zagier \cite[p.15]{Zag09} and Mertens--Ono--Rolen \cite[equation~(1.4)]{MOR21}. Moreover, compare $g_\ell$ with 
\[
F_\ell^{[2]}(\tau) := \sum_{\substack{n>m\ge1 \\n-m \text{ odd }}}(-1)^n m^{\ell-1} q^{\frac{nm}{2}}
\]
which can be expressed as the Rankin--Cohen bracket of a mock modular form and $\eta^3$ as explained in \cite[Example~4 on p.~44]{DMZ}. This function goes back to \cite{EOYT, ET}, where the authors studied characters of the $\mathcal{N} = 4$ superconformal algebra in two dimensions and the elliptic genus of K3 surfaces.
\begin{thm}\label{T:Recursion}
	Let $n\in \N$.
	\begin{enumerate}[leftmargin=*, label={\normalfont(\arabic*)}]
	\item We have 
	\begin{align*}
		f_n(\tau) &= \frac n{2^{n-1}}g_n(\tau) - \sum_{\ell=2}^{n-2} \frac{\ell}{2^{\ell-2}}\binom{n-1}{\ell}f_{n-\ell}(\tau) g_{\ell}(\tau).
	\end{align*}
	\item We have
	\begin{align*}
	\	f_n(\tau) &= \sum_{\ell=2}^n \frac{(n-1)!\ell}{(\ell-1)!2^{\ell-1}} g_{\ell}(\tau) \mathrm{Tr}_{n-\ell}(\psi,f;\t),
	\end{align*}
	with $f:=\{ f_{k}(\tau)\}_{k\in\N}$ and $\psi(\lambda):= (-1)^{\sum_{j=1}^k m_j} \phi(\lambda)$.
	\end{enumerate}
\end{thm}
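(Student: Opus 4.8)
The plan is to package both recursions into a single exponential generating-function identity and then to prove that one identity; throughout, $\t$ is fixed and all series are formal in $z$.

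\emph{Step 1: an exponential formula for the traces.} Since $\phi(\lambda)=\prod_j\frac{2^{m_j}}{m_j!\,j!^{m_j}}$ factors over the parts of $\lambda$, summing over each multiplicity $m_j\ge0$ produces a single exponential, so
\begin{equation*}
\Phi(z):=\sum_{n\ge0}\mathrm{Tr}_n(\phi,f;\t)\,z^{n}=\prod_{j\ge1}\exp\!\Bigl(\tfrac{2 f_j(\t)}{j!}\,z^{j}\Bigr)=\exp\bigl(2F(z)\bigr),\qquad F(z):=\sum_{j\ge1}\frac{f_j(\t)}{j!}\,z^{j}.
\end{equation*}
As $\psi(\lambda)=(-1)^{\sum_j m_j}\phi(\lambda)$ merely inserts a factor $(-1)^{m_j}$ into the $j$-th exponential, the identical computation gives $\sum_{n\ge0}\mathrm{Tr}_n(\psi,f;\t)\,z^{n}=\exp(-2F(z))=\Phi(z)^{-1}$. (Here $f_1=g_1=0$, so the shifted sums below over $n\ge1$ or $\ell\ge2$ cause no boundary trouble.)

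\emph{Step 2: reduction to one closed form.} Set $K(z):=\sum_{\ell\ge2}\frac{g_\ell(\t)}{2^{\ell-2}(\ell-1)!}\,z^{\ell}$ and $H(z):=\tfrac12 K'(z)=\sum_{\ell\ge2}\frac{\ell\,g_\ell(\t)}{2^{\ell-1}(\ell-1)!}\,z^{\ell-1}$. Multiplying recursion~(2) by $\frac{z^{n-1}}{(n-1)!}$ and summing over $n$, the left side becomes $F'(z)$, while on the right the substitution $m=n-\ell$ splits the sum as a product, giving $H(z)\Phi(z)^{-1}$ by Step~1; hence (2) is equivalent to $F'\Phi=H$. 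Applying the same operation to recursion~(1) (after moving the convolution to the left) turns $\frac{1}{(n-1)!}\binom{n-1}{\ell}$ into $\frac{1}{\ell!(n-1-\ell)!}$ and factors the convolution as $K(z)F'(z)$, so (1) is equivalent to $F'(1+K)=H$. Because $\Phi=e^{2F}$ gives $\Phi'=2F'\Phi$, each of these is in turn equivalent to the single identity
\begin{equation}\label{eq:star}
\Phi(z)=1+K(z),\qquad\text{i.e.}\qquad \mathrm{Tr}_n(\phi,f;\t)=\frac{g_n(\t)}{2^{n-2}(n-1)!}\quad(n\ge2).
\end{equation}
Indeed $F'\Phi=\tfrac12\Phi'$, so (2) reads $\Phi'=2H=K'$, which with $\Phi(0)=1=1+K(0)$ yields \eqref{eq:star}; and (1) reads $F'=\tfrac{K'}{2(1+K)}=\tfrac12(\log(1+K))'$, whence $2F=\log(1+K)$ and $\Phi=e^{2F}=1+K$. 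Conversely, \eqref{eq:star} returns both recursions by reversing these steps and extracting the coefficient of $z^{n-1}$.

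\emph{Step 3: proof of \eqref{eq:star}.} This is the crux. Summing $R_k(q)=\sum_{m,n}m^{k}N(m,n)q^{n}$ against $z^k/k!$ gives the rank-moment generating function in closed form via \eqref{eq:rank}, namely $\sum_{k\ge0}R_k(q)\frac{z^k}{k!}=R(e^{z};q)=\sum_{n\ge0}\frac{q^{n^2}}{(e^{z}q)_n(e^{-z}q)_n}$. Combined with Theorem~\ref{thm:main}, identity \eqref{eq:star} is therefore equivalent to
\begin{equation*}
1+\sum_{\ell\ge2}\frac{g_\ell(\t)}{2^{\ell-2}(\ell-1)!}\,z^{\ell}=\frac{z\,(q)_\infty}{2\sinh\!\lrb{\frac z2}}\,R(e^{z};q).
\end{equation*}
I would establish this by expanding the right-hand side in $z$: the prefactor $\frac{z}{2\sinh(z/2)}$ contributes, at $q^0$, exactly the Bernoulli constants $(1-2^{\ell-1})\frac{B_\ell}{2\ell}$ of \eqref{eq:gl} (one checks $\frac{z}{2\sinh(z/2)}=1-\frac{z^2}{24}+\cdots$ against the $\ell=2$ term), while the $q$-dependent part of $R(e^{z};q)$ must reproduce the two divisor sums in \eqref{eq:gl}. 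I expect this identification of the rank moments with the explicit series $g_\ell$—rather than the formal bookkeeping of Steps~1--2—to be the main obstacle; it is the rank analogue of the computation underlying Theorem~\ref{thm:traces}. If instead $f_k$ is defined in Subsection~\ref{Modularityandcompletion} directly through \eqref{eq:star} or an equivalent generating function in the $g_\ell$, then \eqref{eq:star} holds by construction and Steps~1--2 alone prove both recursions.
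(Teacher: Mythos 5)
Your Steps 1 and 2 are correct and are essentially the same bookkeeping the paper performs: the reduction of both recursions to the single generating-function identity $\Phi(z)=1+K(z)$ is exactly the paper's Lemma~\ref{lem:mathcalF} (which states $\frac{\pi z(q)_\infty}{\sin(\pi z)}R(\zeta;q)=1+4\sum_{k\ge1}g_k(\tau)\frac{(\pi iz)^k}{(k-1)!}$, i.e.\ $e^{2F}=1+K$ after the cycle-index expansion), and the paper likewise obtains (1) and (2) by differentiating \eqref{eq:f}, substituting $1+K$ respectively $e^{-2F}$ for the exponential, and extracting coefficients. So the architecture of your argument matches the paper's.

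The genuine gap is Step 3, which you yourself flag as ``the main obstacle'' and then only sketch. The identity $\Phi=1+K$ is not a formality: it asserts that the $q$-expansion of $\frac{z(q)_\infty}{2\sinh(z/2)}R(e^z;q)$ produces exactly the two restricted divisor sums in \eqref{eq:gl}, and this is where all the mathematical content of the theorem lives. The paper proves it in two stages: Lemma~\ref{lem:R} starts from the Atkin--Swinnerton-Dyer Lerch-type formula \eqref{eq:formula_for_Rk} for $(q)_\infty R_k(q)$, splits the sum according to the parity of $n$, performs several changes of variables, and applies the binomial identity $(x+1)^k-(x-1)^k=2\sum_{2\mid\ell}\binom{k}{\ell-1}x^{\ell-1}$ to land on the specific inequalities $2n-1\ge3m$ and $n-1\ge6m$ defining $g_\ell$; Lemma~\ref{lem:mathcalF} then converts this into the generating-function statement via a Bernoulli-polynomial computation (using \eqref{eq:BernoulliTranslation}, \eqref{eq:BernoulliDerivative}, and $B_m(\tfrac12)=0$ for $m$ odd) that collapses a triple sum to the single term $n=\ell+1$. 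Your verification of the $q^0$ coefficient via $\frac{z/2}{\sinh(z/2)}=\sum_n B_n(\tfrac12)\frac{z^n}{n!}$ is correct but only handles the constant terms; none of the divisor-sum identification is carried out. Your closing fallback does not apply either: $f_k$ is defined by \eqref{eq:f} through the rank generating function, while $g_\ell$ is defined independently by the explicit formula \eqref{eq:gl}, so the identity cannot hold ``by construction''---it must be proved, and that proof is missing.
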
 
As a direct corollary, we see that the algebra $\mathcal{G}:=\mathbb{Q}[g_2,g_4,\ldots, G_2, G_4,\ldots]$ is closed under~$D$, and that the elements of $\mathcal{G}$ admit a quasi-completion:
\begin{cor}
We have $\mathcal{G} = \mathcal{F}$, where the algebra $\mathcal{F}$ is given by Theorem~\ref{thm:main}\ \ref{it:iii}.
\end{cor}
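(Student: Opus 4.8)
The plan is to prove the two inclusions $\mathcal{F}\subseteq\mathcal{G}$ and $\mathcal{G}\subseteq\mathcal{F}$ separately, in each case by strong induction on the index using the triangular recursion of Theorem~\ref{T:Recursion}\,(1). Since both algebras share the Eisenstein series $G_2,G_4,\dots$ as generators, it suffices to show that every $f_k$ lies in $\mathcal{G}$ and that every $g_k$ lies in $\mathcal{F}$. As $f_k=g_k=0$ for odd $k$, only even indices need to be treated.

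For $\mathcal{F}\subseteq\mathcal{G}$, I would induct on even $n$. The base case is $n=2$: the sum in Theorem~\ref{T:Recursion}\,(1) is empty, so $f_2=g_2\in\mathcal{G}$. For the inductive step, the recursion
\[
f_n = \frac{n}{2^{n-1}}g_n - \sum_{\ell=2}^{n-2}\frac{\ell}{2^{\ell-2}}\binom{n-1}{\ell}f_{n-\ell}\,g_\ell
\]
expresses $f_n$ as a $\mathbb{Q}$-polynomial in $g_n$, in the $g_\ell$ with $\ell<n$, and in the $f_{n-\ell}$ with $n-\ell\le n-2<n$. By the induction hypothesis each $f_{n-\ell}\in\mathcal{G}$, while the $g$'s are generators of $\mathcal{G}$, so $f_n\in\mathcal{G}$.

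The reverse inclusion $\mathcal{G}\subseteq\mathcal{F}$ uses the same recursion, now solved for $g_n$. The key observation is that the leading coefficient $\tfrac{n}{2^{n-1}}$ is nonzero, so Theorem~\ref{T:Recursion}\,(1) can be inverted to give
\[
g_n = \frac{2^{n-1}}{n}\Bigl(f_n + \sum_{\ell=2}^{n-2}\frac{\ell}{2^{\ell-2}}\binom{n-1}{\ell}f_{n-\ell}\,g_\ell\Bigr).
\]
Again by strong induction on even $n$ (base case $g_2=f_2\in\mathcal{F}$), the right-hand side is a $\mathbb{Q}$-polynomial in $f_n$, in the $f_{n-\ell}$, and in the $g_\ell$ with $\ell<n$, all of which lie in $\mathcal{F}$ (the latter by induction); hence $g_n\in\mathcal{F}$. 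Combining the two inclusions yields $\mathcal{F}=\mathcal{G}$.

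There is no serious obstacle here: the entire content lies in the triangularity and invertibility of the recursion in Theorem~\ref{T:Recursion}\,(1), which furnishes a two-way change of generators between the $f_k$ and the $g_k$ (the alternative recursion in part~(2) is not needed). The asserted closure of $\mathcal{G}$ under $D$ and the existence of quasi-completions for elements of $\mathcal{G}$ are then immediate consequences of the equality $\mathcal{G}=\mathcal{F}$ together with Theorem~\ref{thm:main}\,\ref{it:iii} and \ref{it:ii}.
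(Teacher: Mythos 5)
Your proof is correct and follows exactly the route the paper intends: the paper states the corollary as a direct consequence of Theorem~\ref{T:Recursion}\,(1), and the two-way induction via the triangular recursion with nonzero leading coefficient $\tfrac{n}{2^{n-1}}$ is precisely the implicit argument. The base case $f_2=g_2$ and both inclusions check out.
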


The recursive formulas in Theorem $\ref{T:Recursion}$ suggest that the Fourier coefficients of $f_k$ may have large denominators. However, we show in the following theorem that, with the exception of the constant term, all Fourier coefficients of~$f_k$ are integers, similar as for~$ G_k$.

\begin{thm}\label{thm:integrality}
For $k\geq 2$, the Fourier coefficients of $f_k+\frac{B_k}{2k}$ are integers. 
\end{thm}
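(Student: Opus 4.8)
The plan is to bypass the recursions of Theorem~\ref{T:Recursion} and instead read off the Fourier coefficients of $f_k$ directly from the rank moments $R_k$, which are manifestly integral. Since $\phi(\lambda)=\prod_j \tfrac{2^{m_j}}{m_j!\,j!^{m_j}}$ factors over the parts of $\lambda$, summing over all partitions turns the trace generating function into an exponential,
\[
\sum_{k\ge 0}\mathrm{Tr}_k(\phi,f;\t)\,z^k \= \exp\!\lrb{2\sum_{j\ge 1}\frac{f_j(\t)}{j!}\,z^j},
\]
while $\sum_{k\ge0}R_k(q)\tfrac{z^k}{k!}=R(e^z;q)$ by \eqref{eq:rank} and \eqref{eq:rankmoments}. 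Substituting both into \eqref{eq:R_k-f_k}, multiplying by $(q)_\infty$, and taking logarithms, the factor $\tfrac{2\sinh(z/2)}{z}$ contributes only $q$-independent terms; comparing the $q$-dependent parts (and using property~\ref{it:i} to identify the constant terms $f_j|_{q^0}=-\tfrac{B_j}{2j}$, together with $(q)_\infty R(e^z;q)|_{q^0}=1$) yields the identity
\begin{equation}\label{eq:plan-key}
\sum_{j\ge1}\frac{f_j(\t)+\tfrac{B_j}{2j}}{j!}\,z^j \= \tfrac12\,\Log\!\lrb{(q)_\infty R(e^z;q)}
\end{equation}
in $\q[[q]][[z]]$. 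Thus it suffices to prove that the exponential coefficients of the right-hand side lie in $\Z[[q]]$.

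The key input is a parity statement for $H(z;q):=(q)_\infty R(e^z;q)=\sum_{k\ge0}\tfrac{h_k}{k!}z^k$, where $h_k:=(q)_\infty R_k(q)$. I would show that $h_0=1$ and $h_k\in 2\Z[[q]]$ for all $k\ge1$. This is immediate from rank symmetry $N(m,n)=N(-m,n)$, visible in \eqref{eq:rank}: it forces $R_k=0$ for odd $k$, and for even $k\ge2$ gives $[q^n]R_k=2\sum_{m\ge1}m^kN(m,n)$ for $n\ge1$ and $0$ for $n=0$, so $R_k\in2\Z[[q]]$; multiplying by $(q)_\infty\in\Z[[q]]$ preserves this, and $h_0=(q)_\infty R_0=(q)_\infty/(q)_\infty=1$.

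Finally I would invoke the moment--cumulant formula: writing $\Log H=\sum_{j\ge1}\tfrac{L_j}{j!}z^j$, one has
\[
L_j \= \sum_{\pi}(-1)^{|\pi|-1}(|\pi|-1)!\prod_{B\in\pi}h_{|B|},
\]
summed over set partitions $\pi$ of $\{1,\dots,j\}$. Each coefficient $(-1)^{|\pi|-1}(|\pi|-1)!$ is an integer, and for $j\ge1$ every block satisfies $|B|\ge1$, so every summand carries at least one even factor $h_{|B|}$; hence $L_j\in2\Z[[q]]$. By \eqref{eq:plan-key} the $j$-th exponential coefficient of the left-hand side equals $f_j+\tfrac{B_j}{2j}=\tfrac12 L_j\in\Z[[q]]$, which is the assertion for all even $k=j\ge2$ (the case $k$ odd being trivial, since then $f_k=B_k=0$).

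The one genuine obstacle is controlling the logarithm in \eqref{eq:plan-key}: a naive term-by-term analysis of the recursion in Theorem~\ref{T:Recursion} is defeated by the powers of $2$ and the Bernoulli denominators, which do not cancel individually. Recasting the division by $2$ through cumulants sidesteps this entirely, reducing everything to the parity of the $h_k$; the remaining care is purely formal, ensuring all exponentials and logarithms are well defined in $\q[[q]][[z]]$ because $H\equiv 1 \pmod z$.
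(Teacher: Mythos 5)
Your proof is correct, and its skeleton coincides with the paper's: both start from the identity $\exp\bigl(2\sum_{k\ge1}(f_k+\frac{B_k}{2k})\frac{z^k}{k!}\bigr)=1+(q)_\infty\sum_{k\ge1}R_k(q)\frac{z^k}{k!}$ obtained from \eqref{eq:f}, \eqref{rmom}, and Lemma~\ref{lem:ExpB}, take logarithms, and feed in the evenness of $h_k:=(q)_\infty R_k(q)$ for $k\ge1$ (which you get from the rank symmetry $N(m,n)=N(-m,n)$ and the paper reads off from \eqref{eq:formula_for_Rk}; these are the same fact). The genuine difference is how the denominator $n$ in $\log(1+x)=\sum_{n\ge1}\frac{(-1)^{n+1}}{n}x^n$ is neutralized. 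The paper applies the operator $\Omega(z^k)=k!\,z^k$, groups the resulting multinomial sums by the partition $\lambda$ associated to $(k_1,\dots,k_n)$, and proves $n\mid\binom{|\lambda|}{\lambda_1,\dots,\lambda_n}\binom{n}{r_1(\lambda),r_2(\lambda),\dots}$ via a factorization of the multinomial coefficient and two applications of B\'ezout (Lemma~\ref{lem:div}). You instead keep the exponential normalization and invoke the moment--cumulant formula over set partitions, whose coefficients $(-1)^{|\pi|-1}(|\pi|-1)!$ are already integers because the $\frac1n$ has been absorbed into $(n-1)!$ times the (integral) number of set partitions with prescribed block sizes; then every monomial $\prod_{B\in\pi}h_{|B|}$ lies in $2\mathbb{Z}[\![q]\!]$ since every block is nonempty, and the conclusion is immediate. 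Your route is shorter and dispenses with Lemma~\ref{lem:div} entirely, at the cost of quoting (or proving) the set-partition form of the cumulant expansion; the paper's route is self-contained at the level of binomial arithmetic and incidentally exhibits the stronger termwise divisibility by $2^n$. One cosmetic remark: your derivation of the key identity by ``comparing $q$-dependent parts'' and using Theorem~\ref{thm:main}~(1) to fix the constant terms is exactly an inline re-derivation of Lemma~\ref{lem:ExpB}, so you could simply cite that lemma as the paper does.
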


Finally, in part (1) of the next theorem, we give an explicit formula for $D(f_k)$ and explain how the raising and lowering operators act on the completions~$\widehat{f}_k$ of the\footnote{See Subsection~\ref{subsec:modularandquasi} for the definition; in particular, $\widehat{f}_k$ and $f_k^*$ agree if $k\neq 2$.} $f_k$. 
More precisely, let $\widehat{\mathcal{F}}:=\mathbb{C}[\widehat f_2,\widehat f_4, \widehat f_6,\ldots,\widehat{G}_2,G_4,G_6]$ and define the {\it raising} and the {\it lowering operator} by 
\begin{equation*}
	\mathcal R_k := 2i\frac\partial{\partial\tau} + \frac kv, \qquad L:=-2iv^2\frac{\partial}{\partial\overline{\t}} \qquad\qquad (k\in \Z, \tau=u+iv).
\end{equation*}
In parts (2) and (3), we obtain a recursive expression for the action of the raising and lowering operators on $\widehat{f}_k$. 
In physics literature, the latter of these falls into the realm of ``holomorphic anomaly equations''.
In string theory, mirror symmetry, enumerative geometry, and topological field theory, a \emph{holomorphic anomaly} refers to the phenomenon where a generating function (or its coefficients), instead of being purely holomorphic, acquires a controlled dependence on both $\tau$ and $\overline{\tau}$. A holomorphic anomaly equation describes this controlled dependency through a recursive relation that expresses the anti-holomorphic variation in terms of functions of lower genus or weight. While these may themselves carry non-holomorphic dependence, the anomaly equation organizes this structure in a precise and constrained way (see for example \cite{Hori2003, HosonoSaitoTakahashi1999, Manschot2019, Oberdieck2022}).
We let $\delta_{\mathcal S}:=1$ if a statement~$\mathcal S$ holds and $\delta_{\mathcal{S}}:=0$ otherwise.
\begin{thm}\label{thm:invariant-space}
	We have the following.
	\begin{enumerate}[leftmargin=*,label=\rm(\arabic*)]
		\item\label{it:Di} For $k\ge2$, we have
		\begin{equation*}
			D(f_k(\tau))  =  \frac{k!}{6} \mathrm{Tr}_{k+2}(\phi,3G-f;\tau)-\frac{k-1}{6(k+1)}f_{k+2}(\tau)	-\frac{1}{3}\sum_{a=1}^{k-1} \binom{k}{a} f_{a+1}(\tau)f_{k-a+1}(\tau).
		\end{equation*}
		\item The algebra $\widehat{\mathcal{F}}$ is closed under the raising operator. In particular, for $k\ge2$ we have
		\begin{multline*}
			\hspace{.5cm}-\frac{1}{4\pi}\mathcal{R}_k\!\left(\widehat{f_k}(\tau)\right)  =  \frac{k!}{6} \mathrm{Tr}_{k+2}\left(\phi,3\widehat G-\widehat f;\tau\right)-\frac{k-1}{6(k+1)} \widehat f_{k+2}(\tau)\\
			-\frac{1}{3}\sum_{a=1}^{k-1} \binom{k}{a}  \widehat f_{a+1}(\tau)  \widehat f_{k-a+1}(\tau),
		\end{multline*}
		where $\widehat{G}=\{\widehat G_k\}_{k\geq 1}$ is defined by $\widehat G_k=G_k$ for $k\neq 2$ and $\widehat{G}_2$  is the completion of $G_2$, given in \eqref{eq:G2*}.
		\item We have $L(\widehat{\mathcal{F}})\subseteq \widehat{\mathcal{F}} \oplus \sqrt{v} |\eta|^2 \widehat{\mathcal{F}}$. In particular, for $k\ge2$ we have
		\begin{align*}
		L\!\left(\widehat f_k(\tau)\right)&= -\frac{3}{8\pi}\delta_{k=2} + \frac{\sqrt{3}k!}{4\sqrt2\pi}\sqrt{v} |\eta(\tau)|^2  \mathrm{Tr}_{k-2}\left(\psi,\widehat{f};\tau\right),
	\end{align*}
	where $\psi$ is defined in Theorem~\ref{T:Recursion} {\rm(2)}.
		\item If $\widehat f\in \widehat{\mathcal{F}}$ is of weight $k$, then $(D +\frac{2k}{3} f_2^*)(\widehat f)$ transforms modular of weight $k+2$.
	\end{enumerate}
\end{thm}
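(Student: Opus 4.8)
The plan is to derive all four parts from one engine: the completed rank generating function $\widehat R(\zeta;\tau)$, with $\zeta=e^{z}$, is a real-analytic Jacobi form of weight $\tfrac12$ and index $-\tfrac32$, and partition traces package into exponentials. Concretely,
\[
\sum_{k\ge0}\mathrm{Tr}_k(\phi,f;\tau)\,z^{k}=\exp\!\left(2\sum_{j\ge1}\frac{f_j(\tau)}{j!}z^{j}\right),
\]
and the completed counterpart $\widehat R(e^{z};q)=\frac{2\sinh(z/2)}{z(q)_\infty}e^{\widehat{\mathcal L}(z;\tau)}$ with $\widehat{\mathcal L}:=2\sum_{j\ge1}\frac{\widehat f_j}{j!}z^{j}$ is how I take the $\widehat f_k$ to be defined (via \eqref{Rs} and \eqref{eq:F-hat}). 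Setting $z=2\pi i w$ so that $\partial_w^{2}=-4\pi^{2}\partial_z^{2}$, the Eichler--Zagier heat operator $\partial_w^{2}-8\pi i m\,\partial_\tau$ at index $m=-\tfrac32$ becomes, after dividing by $2\pi i$, the normalized operator $D+\tfrac16\partial_z^{2}$. The single coefficient $\tfrac16$ (and hence the $\tfrac13$ in the quadratic terms, and the factor $3=-2m$ in front of $\widehat G$) is forced by the index being $-\tfrac32$.

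I would prove (2) first. The development-coefficient theory for the real-analytic Jacobi form $\widehat R$ gives a completed heat equation whose $z^{k}$-coefficient is covariant under the Maass raising operator; since the modular-covariant form of $D$ on a weight-$k$ object is $-\tfrac1{4\pi}\mathcal R_k=D-\tfrac{k}{4\pi v}$, the equation reads $-\tfrac1{4\pi}\mathcal R_k(\,\cdot\,)+\tfrac16(\partial_z^{2}\text{-contribution})=(\text{anomaly})$, the term $\tfrac{k}{4\pi v}$ being absorbed into the non-holomorphic anomaly. Substituting $\widehat R=\frac{2\sinh(z/2)}{z(q)_\infty}e^{\widehat{\mathcal L}}$ and dividing by $\widehat R$, I use $\partial_z^{2}\log e^{\widehat{\mathcal L}}=\widehat{\mathcal L}''+(\widehat{\mathcal L}')^{2}$: with the normalization $\widehat f_k=\tfrac{k!}{2}[z^{k}]\widehat{\mathcal L}$, the term $(\widehat{\mathcal L}')^{2}$ yields exactly $-\tfrac13\sum_{a=1}^{k-1}\binom{k}{a}\widehat f_{a+1}\widehat f_{k-a+1}$ (the boundary terms vanish since $\widehat f_1=0$), and $\widehat{\mathcal L}''$ contributes the $\widehat f_{k+2}$ part. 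The remaining logarithmic derivatives of the prefactor $\frac{2\sinh(z/2)}{z(q)_\infty}$ --- namely $D\log\frac1{(q)_\infty}$, giving $\widehat G_2$, and the even powers of $\tfrac{z}{2}\coth\tfrac{z}{2}=\sum_k\tfrac{B_{2k}}{(2k)!}z^{2k}$, giving the Bernoulli constants --- assemble into the completed Eisenstein trace $\tfrac{k!}{6}\mathrm{Tr}_{k+2}(\phi,3\widehat G-\widehat f)$, equivalently recognized through the crank generating function of Theorem~\ref{thm:traces}; collecting the single-part contribution of this trace fixes the coefficient $-\tfrac{k-1}{6(k+1)}$ of $\widehat f_{k+2}$. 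Closure of $\widehat{\mathcal F}$ under $\mathcal R$ is then immediate, since the right-hand side lies in $\mathbb{Q}[\widehat f_2,\widehat f_4,\ldots,\widehat G_2,G_4,G_6]$ and $\mathcal R$ is a derivation for which $\mathbb{C}[\widehat G_2,G_4,G_6]$ is classically closed. Part (1) is the holomorphic limit $\overline\tau\to-i\infty$ of (2): there $\widehat f_j\to f_j$, $\widehat G\to G$, $\tfrac{k}{4\pi v}\to0$ and $-\tfrac1{4\pi}\mathcal R_k\to D$, so the identity of (2) degenerates termwise into the stated formula for $D(f_k)$ (equivalently, it is the holomorphic part of the completed heat equation).

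For (3) I would instead apply the lowering operator $L=-2iv^{2}\partial_{\overline\tau}$, which annihilates all holomorphic factors, so that $L\widehat{\mathcal L}=L\widehat R/\widehat R$. The numerator $L\widehat R$ is the shadow of the rank: an explicit weight-$\tfrac32$ unary theta times $\sqrt v$, whose normalization carries $\sqrt{2|m|}=\sqrt3$. Using $1/\widehat R=\frac{z(q)_\infty}{2\sinh(z/2)}e^{-\widehat{\mathcal L}}$ together with the sign twist $e^{-\widehat{\mathcal L}}=\sum_k\mathrm{Tr}_k(\psi,\widehat f;\tau)z^{k}$ (valid because $\psi(\lambda)=(-1)^{\sum_j m_j}\phi(\lambda)$ replaces $z^{j}$ by $-z^{j}$), the product combines the $\sqrt v$ and anti-holomorphic theta with the holomorphic $\eta$ from $(q)_\infty$ into $\sqrt v\,|\eta|^{2}$. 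The vanishing order of the shadow in $z$ produces the shift to $\mathrm{Tr}_{k-2}(\psi,\widehat f)$, and matching the leading coefficient yields the constant $\tfrac{\sqrt3\,k!}{4\sqrt2\,\pi}$ and the isolated $-\tfrac{3}{8\pi}\delta_{k=2}$; this proves $L(\widehat{\mathcal F})\subseteq\widehat{\mathcal F}\oplus\sqrt v\,|\eta|^{2}\widehat{\mathcal F}$.

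Finally, (4) is formal once the $k=2$ completion relation from \eqref{eq:G2*} is recorded, $f_2^{*}=\widehat f_2-\tfrac{3}{8\pi v}$, whence $\tfrac{2k}{3}f_2^{*}=\tfrac{2k}{3}\widehat f_2-\tfrac{k}{4\pi v}$ and therefore $D+\tfrac{2k}{3}f_2^{*}=\big(D-\tfrac{k}{4\pi v}\big)+\tfrac{2k}{3}\widehat f_2=-\tfrac1{4\pi}\mathcal R_k+\tfrac{2k}{3}\widehat f_2$. For $\widehat f\in\widehat{\mathcal F}$ of weight $k$ the raising operator $\mathcal R_k$ produces a weight-$(k+2)$ real-analytic modular form, and multiplication by the weight-$2$ modular form $\widehat f_2$ preserves that weight, so $(D+\tfrac{2k}{3}f_2^{*})(\widehat f)$ transforms modularly of weight $k+2$. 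I expect the main obstacle to be the heart of (2) and (3): rigorously establishing the completed heat equation for $\widehat R$ and, above all, identifying its anomaly precisely --- splitting it into the Eisenstein trace $3\widehat G$ coming from the prefactor and the weight-$\tfrac32$ shadow governing $L$ --- while correctly handling the weight-$2$ anomaly that produces both the $\tfrac{3ic}{4\pi}$ transformation and the $\delta_{k=2}$ term.
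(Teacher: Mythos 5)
Your parts (3) and (4) are essentially sound. For (3) your sketch matches the paper's computation: $L$ kills the holomorphic factors, $L(R^*)$ is an explicit unary theta coming from $L(E(\cdot))$, the sign-twisted cycle-index identity $e^{-\widehat{\mathcal L}}=\sum_k \mathrm{Tr}_k(\psi,\widehat f;\tau)(2\pi iz)^k$ produces the $\psi$-trace, the antiholomorphic theta combines with $\eta$ into $\sqrt v\,|\eta|^2$, and the $\delta_{k=2}$ term comes from $L$ hitting the Gaussian $e^{-3\pi z^2/(2v)}$ in \eqref{eq:F-hat} (equivalently, from $\widehat f_2=f_2^*+\frac{3}{8\pi v}$, Lemma~\ref{lem:f^}). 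For (4), your identity $D+\frac{2k}{3}f_2^* = -\frac{1}{4\pi}\mathcal R_k+\frac{2k}{3}\widehat f_2$ is correct and gives a legitimate alternative to the paper's argument via the Serre derivative, where one instead writes $D+\frac{2k}{3}f_2^*-\vartheta_k = 2k\bigl(\frac{1}{3}f_2^*-G_2\bigr)$ and observes that the right-hand side transforms as a weight-$2$ non-holomorphic modular form. However, the heart of (1)/(2) has a genuine gap: you assert that the anomaly of your ``completed heat equation'' --- the term $\frac{k!}{6}\mathrm{Tr}_{k+2}(\phi,3\widehat G-\widehat f;\tau)$ --- is assembled from the logarithmic derivatives of the prefactor $\frac{2\sinh(z/2)}{z(q)_\infty}$. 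Those derivatives can only yield $G_2$ (from $D\log (q)_\infty^{-1}$, cf.\ Lemma~\ref{lem:AnyF}) together with Bernoulli constants from the $\coth$ expansion; they cannot produce $G_4, G_6,\ldots$, let alone the full multiplicative trace structure. The actual source is the Atkin--Garvan rank--crank PDE (Lemma~\ref{thm:rank-crankPDE}): $(H+6G_2)$ applied to the rank factor equals $2\bigl(\frac{\zeta^{1/2}(q)_\infty C(\zeta;q)}{1-\zeta}\bigr)^3$, and it is the \emph{cube of the crank factor}, expanded via Lemma~\ref{lem:cranck} as $\exp\bigl(6\sum_k G_k(\tau)\frac{(2\pi iz)^k}{k!}\bigr)$, that --- after dividing by $\exp\bigl(2\sum_k f_k(\tau)\frac{(2\pi iz)^k}{k!}\bigr)$ and applying Lemma~\ref{lem:cycle-index} with $x_k=\frac{2}{k!}(3G_k-f_k)$ --- produces the trace. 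No ``development-coefficient theory'' supplies this: for a mock Jacobi form the image under the heat operator is not determined a priori, and the fact that it is a holomorphic cube is precisely the deep input. You half-gesture at this (``equivalently recognized through the crank generating function'') and you list identifying the anomaly as the main obstacle, but that obstacle \emph{is} the theorem; as written, your mechanism would fail.

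A second missing piece for (2): to transfer the PDE to the completions you need that the non-holomorphic correction in $R^\circ$ is annihilated by $H$ --- the paper's Lemma~\ref{lem:H-on-non-hol-R}, proved by the explicit computation $2\pi x E'(x)+E''(x)=0$ applied to the $S(3z\pm\tau;3\tau)$ terms --- so that $f_k^*$ satisfies the \emph{same} differential equation with the same holomorphic right-hand side; Lemma~\ref{lem:f^} (in particular $3G-f^*=3\widehat G-\widehat f$) then converts $D(f_k^*)$ into $-\frac{1}{4\pi}\mathcal R_k(\widehat f_k)$. Your remaining bookkeeping in (2) --- $(\widehat{\mathcal L}')^2$ giving $-\frac{1}{3}\sum_a\binom{k}{a}\widehat f_{a+1}\widehat f_{k-a+1}$, the $\widehat f_{k+2}$-coefficient $-\frac{k-1}{6(k+1)}$, and the factor $\frac16$ forced by index $-\frac32$ --- is consistent with the paper's equation \eqref{D}. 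Finally, deducing (1) as the limit $\overline\tau\to-i\infty$ of (2) reverses the paper's order (it proves (1) directly and then repeats the computation for the completion); this is acceptable once one notes the interchange of $D$ with the limit, but it does not route around the two missing inputs above.
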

\begin{rem*}
We expect that the algebra $\mathcal{F}$ is freely generated by the $f_k$ for $k$ even and $G_2, G_4$, and $G_6$ (see also question~\eqref{q:2} in Section~\ref{sec:?}). In particular, this would imply that the subalgebra $\mathbb{Q}[f_2,f_4,\ldots]$ of $\mathcal F$ is not closed under applying~$D$, since Theorem~\ref{thm:invariant-space}~\ref{it:Di} involves the modular Eisenstein series.
\end{rem*}

\subsection{Structure of the paper}
The paper is organized as follows. In Section~\ref{sec:prelim}, we provide certain preliminaries
on quasimodular forms and crank moments, the completion of the rank generating function, the rank-crank PDE, P\'olya cycle index polynomials, and finally, divisibility properties of multinomials. In Section~\ref{sec:mock_Eisenstein} and Section~\ref{sec:recursion}, we prove our theorems. The last two sections are devoted to examples and open questions.

\section*{Acknowledgements}
We thank Caner Nazaroglu, Kilian Rausch, and Don Zagier for helpful comments on a previous version of the manuscript. The first and the second authors were funded by the European Research Council (ERC) under the European Union’s Horizon 2020 research and innovation programme (grant agreement No. 101001179) and the first and the third authors by the SFB/TRR 191 ``Symplectic Structure in Geometry, Algebra and Dynamics'', funded by the DFG (Projektnummer 281071066 TRR 191).

\section{Preliminaries}\label{sec:prelim}

\subsection{Modular forms and quasimodular forms}\label{subsec:modularandquasi}
The Eisenstein series $G_k$ are modular forms of weight $k$ for~$\mathrm{SL}_2(\Z)$ for $k\ge 4$ even. If $k=2$, then we need to add a non-holomorphic part to make $G_2$ modular. 
To be more precise,
\begin{equation}\label{eq:G2*}
	\widehat{G}_2(\tau):=G_2(\tau)+\frac1{8\pi v}
\end{equation}
transforms like a modular form of weight $2$. The holomorphic part $G_2$ can be recovered by
\begin{equation*}
	\lim_{\overline\tau\to-i\infty}\widehat{G}_2(\tau)=G_2(\tau).
\end{equation*}
In general  $f:\mathbb H\to\mathbb C$ is an {\it almost holomorphic modular form of weight $k\in\Z$ and depth $s\in \N_0$}, if the following conditions hold:
\begin{enumerate}[leftmargin=*]
	\item We have, for $\begin{psmallmatrix}a&b\\c&d\end{psmallmatrix}\in\operatorname{SL}_2(\Z)$, 
	\begin{equation*}
		f\left(\frac{a\tau+b}{c\tau+d}\right) = (c\tau+d)^k f(\tau).
	\end{equation*}
	\item We have,\footnote{The $f_j$ here should not be confused with the Eisenstein--type series $f_j$.} for some holomorphic functions $f_j:\mathbb H\to\mathbb C$ with $f_s\neq0$,
	\begin{equation*}
		f(\tau) = \sum_{j=0}^s \frac{f_j(\tau)}{v^j}.
	\end{equation*}
	\item The function~$f$ grows at most polynomially in $\frac1v$ as $v\to0$.
\end{enumerate}
By convention, the function zero is an almost holomorphic modular form of depth~$-\infty$.

The holomorphic part $f_0$ is called a {\it quasimodular form of weight~$k$ and depth~$s$}. Note that
\begin{equation*}
	f_0(\tau) = \lim_{\overline\tau\to-i\infty} f(\tau).
\end{equation*}
More concretely, for a quasimodular form~$g$ of weight $k$ and depth $s$ there exist holomorphic functions $g_j:\mathbb H\to\mathbb C$ for $j\in\{0,\ldots,s\}$ with $g_0=g$ such that, for all $\begin{psmallmatrix}a&b\\c&d\end{psmallmatrix}\in\operatorname{SL}_2(\Z)$,
\begin{align*}
(c\tau+d)^{-k}g\!\left(\frac{a\tau+b}{c\tau+d}\right) \= \sum_{j=0}^s g_j(\tau)\Bigl(\frac{c}{c\tau+d}\Bigr)^j.
\end{align*}
Similarly, we say an analytic function $g^*(\tau,\overline{\tau})$ \emph{transforms like a quasimodular form} if there exist real-analytic functions $g_j^*(\tau,\overline{\tau})$ so that, for all $\begin{psmallmatrix}a&b\\c&d\end{psmallmatrix}\in\operatorname{SL}_2(\Z)$,
\begin{align}\label{eq:qmftrans2}(c\tau+d)^{-k}g^*\!\left(\frac{a\tau+b}{c\tau+d},\frac{a\overline{\tau}+b}{c\overline{\tau}+d}\right) \= \sum_{j=0}^s g_j(\tau,\overline{\tau})\Bigl(\frac{c}{c\tau+d}\Bigr)^j.
\end{align}
The space of all quasimodular forms is a free algebra with generators $G_2, G_4$, and $G_6$. 
It follows from Ramanujan's differential equations, \cite[equations (1), (2)]{Sko}
\begin{align}\label{eq:Ramanujandiff}
	D(G_2) &= - 2 G_2^2 + \frac56G_4,\quad
	D(G_4) = -8G_2G_4 + \frac7{10}G_6,\quad
	D(G_6) = -12G_2G_6 + \frac{400}{7}G_4^2,
\end{align}
that this algebra is closed under differentiation. We also require the {\it Serre derivative} (see, e.g., \cite[p.~48]{Zag123})
\begin{equation}\label{eq:Serrederivative}
\vartheta_k := D + 2k G_2
\end{equation}
which acts on modular forms of weight $k$, preserving the algebra of modular forms.

The {\it Dedekind eta function} $$\eta(\tau):=q^{\frac{1}{24}} \prod_{n\ge1} \left(1-q^n\right)$$ is a modular form of weight $\frac12$. 
It is not hard to deduce the following lemma.
\begin{lem}\label{lem:AnyF} For any holomorphic function~$f$ on $\mathbb H$, we have
\begin{equation*}
	\eta D\!\left(\frac{f}{\eta}\right)=G_2f+ D(f).
\end{equation*}
\end{lem}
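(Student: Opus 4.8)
The plan is to reduce the statement to the single well-known identity $D(\eta)/\eta = -G_2$, after which the lemma becomes an immediate application of the Leibniz rule for $D=q\frac{\partial}{\partial q}$. First I would expand, by the quotient rule,
\[
\eta\, D\!\left(\frac{f}{\eta}\right) = \eta\cdot\frac{D(f)\,\eta - f\,D(\eta)}{\eta^2} = D(f) - f\,\frac{D(\eta)}{\eta},
\]
which is valid since $f$ is holomorphic and $\eta$ is nonvanishing on $\mathbb H$. Thus everything comes down to showing that the logarithmic derivative $D(\eta)/\eta$ equals $-G_2$.

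To compute the logarithmic derivative, I would use the product expansion $\eta(\tau)=q^{1/24}\prod_{n\ge1}(1-q^n)$ to write $\log\eta = \frac{1}{24}\log q + \sum_{n\ge1}\log(1-q^n)$, and then apply $D$ termwise:
\[
\frac{D(\eta)}{\eta} = D(\log\eta) = \frac{1}{24} - \sum_{n\ge1}\frac{n\,q^n}{1-q^n} = \frac{1}{24} - \sum_{n,m\ge1} m\, q^{nm},
\]
where in the last step I expanded each geometric series $\frac{q^n}{1-q^n}=\sum_{m\ge1}q^{nm}$ and relabeled the double sum by symmetry. Comparing with the definition of the weight-$2$ Eisenstein series, $G_2(\tau) = -\frac{B_2}{4} + \sum_{n,m\ge1} m\,q^{nm}$, and using $B_2=\frac16$ so that $-\frac{B_2}{4}=-\frac{1}{24}$, I obtain $D(\eta)/\eta = -G_2$. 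Substituting this back into the displayed identity above yields $\eta\, D(f/\eta) = D(f) + G_2 f$, as claimed.

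The computation is entirely routine and there is no substantial obstacle; the only point requiring care is the bookkeeping that matches the constant term $\frac{1}{24}$ coming from the prefactor $q^{1/24}$ against $-\frac{B_2}{4}$ in the normalization of $G_2$, so that the Eisenstein series appears with exactly the right constant. One should also note that the argument uses only the holomorphy and nonvanishing of $\eta$ together with its Fourier expansion, so the identity holds for \emph{any} holomorphic $f$ without further hypotheses, consistent with the statement of the lemma.
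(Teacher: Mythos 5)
Your proof is correct. The paper omits the proof entirely (it only remarks that the lemma is ``not hard to deduce''), and your argument --- reducing via the Leibniz/quotient rule to the standard identity $D(\eta)/\eta=-G_2$, verified from the product expansion and the normalization $-\tfrac{B_2}{4}=-\tfrac{1}{24}$ --- is precisely the intended routine computation.
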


\subsection{Crank moments}\label{sec:crank}
Set $\zeta:=e^{2\pi i z}$. By \eqref{eq:crank} and \cite[equation (7)]{Zag91}, the crank generating function can be expressed in terms of the Eisenstein series. Recall that $G_k=0$ if $k$ is odd.
\begin{lem}\label{lem:cranck}
	We have
	\begin{equation*}
		C(\zeta;q) = \frac{\sin(\pi z)}{\pi z(q)_\infty} \exp\!\left(2\sum_{k\ge2}G_{k}(\tau)\frac{(2\pi iz)^{k}}{k!}\right).
	\end{equation*}
\end{lem}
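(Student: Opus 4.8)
The plan is to reduce the identity to a power-series computation by taking logarithms of both sides and comparing the resulting expansions; their equality is forced by the product formula \eqref{eq:crank} together with the Euler product for the sine. First I would write, using $(a)_\infty=\prod_{n\ge1}(1-aq^n)$,
\begin{equation*}
C(\z;q)=\prod_{n\ge1}\frac{1-q^n}{(1-\z q^n)(1-\z^{-1}q^n)},
\end{equation*}
and take the principal logarithm. Expanding each factor via $\log(1-x)=-\sum_{m\ge1}x^m/m$ and using $\z^m+\z^{-m}=2\cos(2\pi m z)$ gives
\begin{equation*}
\log C(\z;q)=\sum_{n,m\ge1}\frac{q^{nm}}m\bigl(2\cos(2\pi m z)-1\bigr)=\log (q)_\infty+2\sum_{n,m\ge1}\frac{q^{nm}}m\cos(2\pi m z),
\end{equation*}
where in the last step I used $-\sum_{n,m\ge1}q^{nm}/m=\log (q)_\infty$. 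This is the target expression for $\log C$.

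Next I would expand the logarithm of the claimed right-hand side, namely
\begin{equation*}
\log\frac{\sin(\pi z)}{\pi z}-\log (q)_\infty+2\sum_{k\ge2}G_k(\t)\frac{(2\pi iz)^k}{k!},
\end{equation*}
by splitting each $G_k$ into its constant term $-B_k/(2k)$ and its $q$-series $\sum_{n,m\ge1}m^{k-1}q^{nm}$, and recalling that only even $k$ contribute since $G_k=0$ for $k$ odd. For the $q$-series part I swap the order of summation and use $\sum_{k\ge2,\,\mathrm{even}}(2\pi imz)^k/k!=\cosh(2\pi imz)-1=\cos(2\pi m z)-1$, which produces exactly $2\sum_{n,m\ge1}\frac{q^{nm}}m\cos(2\pi m z)+2\log (q)_\infty$. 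For the constant part I use the Euler product $\frac{\sin(\pi z)}{\pi z}=\prod_{n\ge1}(1-z^2/n^2)$ together with $\sum_{n\ge1}n^{-2k}=\frac{(-1)^{k+1}(2\pi)^{2k}B_{2k}}{2(2k)!}$ to identify $2\sum_{k\ge2}\bigl(-\tfrac{B_k}{2k}\bigr)\frac{(2\pi iz)^k}{k!}$ with $-\log\frac{\sin(\pi z)}{\pi z}$.

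Substituting these two evaluations back, the constant contribution $-\log\frac{\sin(\pi z)}{\pi z}$ cancels the leading $+\log\frac{\sin(\pi z)}{\pi z}$, while $2\log (q)_\infty$ combines with $-\log (q)_\infty$ to leave $+\log (q)_\infty$; what remains is exactly $\log (q)_\infty+2\sum_{n,m\ge1}\frac{q^{nm}}m\cos(2\pi m z)$, which agrees with the expression for $\log C(\z;q)$ found above. Exponentiating gives the lemma. The only delicate points are bookkeeping rather than conceptual: one must track the restriction to even $k$ (so that $\cosh$, and hence $\cos$, rather than the full exponential appears) and correctly match the two constant-term series through the evaluation $\sum_{n\ge1}n^{-2k}=\frac{(-1)^{k+1}(2\pi)^{2k}B_{2k}}{2(2k)!}$ and the sine product. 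This constant-term comparison, where the signs and a factor of $2$ must align, is the main place an error could arise. Convergence is not an issue, since all manipulations can be carried out at the level of formal power series in $q$ and $z$.
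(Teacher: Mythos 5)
Your proof is correct, and all the delicate constants do align: the $q$-series part of $2\sum_{k}G_k\frac{(2\pi iz)^k}{k!}$ (summed over even $k\ge 2$, since $G_k=0$ for $k$ odd) produces $2\sum_{n,m\ge1}\frac{q^{nm}}{m}\bigl(\cos(2\pi mz)-1\bigr)$, the constant part reproduces $-\log\frac{\sin(\pi z)}{\pi z}$ via $\zeta(2k)=\frac{(-1)^{k+1}(2\pi)^{2k}B_{2k}}{2(2k)!}$, and the totals on both sides reduce to $\log (q)_\infty+2\sum_{n,m\ge1}\frac{q^{nm}}{m}\cos(2\pi mz)$. The one difference from the paper is that the paper does not prove this lemma at all: it simply invokes \eqref{eq:crank} together with equation (7) of Zagier's paper \cite{Zag91}, which is exactly this product-to-Eisenstein-series identity. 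Your logarithmic expansion is the standard direct verification of that cited identity, so what you gain is a self-contained, elementary proof at the level of formal power series, at the cost of redoing a classical computation the authors chose to outsource. No gaps.
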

Taking $\tau \to i\infty$, we obtain the following lemma, which was also observed in \cite[Lemma~3.1]{AOS2024}.
\begin{lem}\label{lem:ExpB}
	We have
	\begin{equation*}
		\frac{\zeta^\frac12}{\zeta-1} = \frac1{2\pi iz} \exp\!\left(-\sum_{k\ge2}\frac{B_{k}}{k}\frac{(2\pi iz)^{k}}{k!}\right).
	\end{equation*}
\end{lem}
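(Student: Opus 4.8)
The plan is to deduce the identity directly from Lemma~\ref{lem:cranck} by letting $\t\to i\infty$, as the phrasing of the statement already suggests. Recall from~\eqref{eq:crank} that $C(\z;q)=\tfrac{(q)_\infty}{(\z q)_\infty(\z^{-1}q)_\infty}$ has constant term (in~$q$) equal to $M(0,0)=1$, so $\lim_{\t\to i\infty}C(\z;q)=1$. On the right-hand side of Lemma~\ref{lem:cranck} I would use that $(q)_\infty\to1$ and that, by the limit formula $\lim_{\t\to i\infty}G_k(\t)=-\tfrac{B_k}{2k}$ recorded just after Theorem~\ref{thm:traces}, each Eisenstein series tends to $-\tfrac{B_k}{2k}$ for $k\ge2$. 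Passing to the limit inside the exponential and the infinite product then gives
\begin{equation*}
	1 \= \frac{\sin(\pi z)}{\pi z}\exp\!\left(-\sum_{k\ge2}\frac{B_k}{k}\frac{(2\pi iz)^k}{k!}\right),
\end{equation*}
which rearranges to $\exp\!\bigl(-\sum_{k\ge2}\tfrac{B_k}{k}\tfrac{(2\pi iz)^k}{k!}\bigr)=\tfrac{\pi z}{\sin(\pi z)}$.

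It then remains to identify the left-hand side of the lemma. Writing $\z=e^{2\pi iz}$, factoring $e^{\pi iz}$ out of the denominator gives
\begin{equation*}
	\frac{\z^{\frac12}}{\z-1}=\frac{e^{\pi iz}}{e^{2\pi iz}-1}=\frac1{e^{\pi iz}-e^{-\pi iz}}=\frac1{2i\sin(\pi z)}.
\end{equation*}
Combining this with the previous display yields
\begin{equation*}
	\frac1{2\pi iz}\exp\!\left(-\sum_{k\ge2}\frac{B_k}{k}\frac{(2\pi iz)^k}{k!}\right)=\frac1{2\pi iz}\cdot\frac{\pi z}{\sin(\pi z)}=\frac1{2i\sin(\pi z)}=\frac{\z^{\frac12}}{\z-1},
\end{equation*}
which is the assertion.

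The only point requiring care is justifying the termwise limit $\t\to i\infty$ inside the exponential. Since for small $|z|$ both sides of Lemma~\ref{lem:cranck} are analytic in~$z$ with $q$-expansions whose coefficients are polynomials in~$z$, one may read the identity coefficient-by-coefficient in~$q$ and extract the constant term; as each $G_k$ has a finite limit, no convergence issue arises. I expect this interchange to be the only (minor) obstacle, everything else being a direct computation. As a self-contained alternative, one can avoid Lemma~\ref{lem:cranck} entirely: starting from $\tfrac{x}{e^x-1}=\sum_{k\ge0}B_k\tfrac{x^k}{k!}$, the series $g(x):=\sum_{k\ge2}\tfrac{B_k}{k}\tfrac{x^k}{k!}$ satisfies $xg'(x)=\tfrac x2\coth(\tfrac x2)-1$, so $g(x)=\log\tfrac{2\sinh(x/2)}{x}$ after fixing the constant by $g(0)=0$; setting $x=2\pi iz$ and using $\sinh(\pi iz)=i\sin(\pi z)$ recovers $\exp(-g(2\pi iz))=\tfrac{\pi z}{\sin(\pi z)}$ and hence the claim.
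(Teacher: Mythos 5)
Your proof is correct and takes essentially the same route as the paper, which simply states that the lemma follows by letting $\t\to i\infty$ in Lemma~\ref{lem:cranck}; your computation fills in exactly the details the authors leave implicit. The self-contained alternative via the Bernoulli generating function is also valid, but not needed.
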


Recall that the Bernoulli numbers $B_n$ are defined as the constant terms of the Bernoulli polynomials $B_n(x)$ of degree $n$, which 
satisfy
\begin{align}\label{eq:BernoulliPolGS}
\sum _{n\ge0} B_{n}(X){\frac {t^{n}}{n!}} &= {\frac {te^{Xt}}{e^{t}-1}}, \\
\label{eq:BernoulliTranslation}
B_{n}(X+Y)&=\sum _{k=0}^{n}{n \choose k}B_{n-k}(X)Y^{k}, \\
\label{eq:BernoulliDerivative}
B_n'(X) &= n B_{n-1}(X).
\end{align}

\subsection{Mock modularity of the rank generating function}\label{S:Completion}\hspace{0cm}
The rank generating function can be written as a Lerch sum 
\begin{equation}\label{eq:RankLerch} R(\zeta;q) = \frac{1-\zeta}{(q)_\infty} \sum_{n\in\Z} \frac{(-1)^nq^\frac{n(3n+1)}2}{1-\zeta q^n}.\end{equation}
Note that this closely resembles the following representation of the crank generating function 
\begin{equation*}
	C(\z;q) = \frac{1-\zeta}{(q)_\infty} \sum_{n\in\Z} \frac{(-1)^n q^{\frac{n(n+1)}{2}}}{1-\zeta q^n}.
\end{equation*}

In Subsection 4.1 of \cite{B2018}, the first author defined (using different notation)
\begin{equation}\label{E:Rt}
	\hspace{-.2cm}R^\#(z; \tau):=
	\left(\frac{R(\zeta; q)}{\zeta^{\frac12}-\zeta^{-\frac12}}q^{-\frac{1}{24}}+\frac12 q^{-\frac16}\sum_{\pm} \pm \zeta^{\mp1} S(3z\pm\tau;3\tau)\right) e^{12\pi^2G_2(\tau)z^2},
\end{equation}
where ($\tau = u+iv,z=x+iy, u,v,x,y\in \mathbb R$)
\begin{equation*}
S(z; \tau):=\sum_{n\in\Z+\frac12}\left(\sgn(n)-E\left(\left(n+\frac{y}{v}\right)\sqrt{2v}\right)\right)
(-1)^{n-\frac12} q^{-\frac{n^2}{2}} e^{-2\pi inz},
\end{equation*}
with $E(y):=2\int_0^y e^{-\pi t^2}dt$. She showed, building on work of Zwegers \cite{Zw}, the following transformation:
\begin{lem}\label{lem:Rt-mod-transform}
	For $\begin{psmallmatrix}a&b\\c&d\end{psmallmatrix}\in\operatorname{SL}_2(\Z)$, we have
\begin{equation*}\label{E:Rt-mod-transform}
	\eta\!\left(\frac{a\tau+b}{c\tau+d}\right) R^\#\!\left(\frac{z}{c\tau+d}; \frac{a\tau+b}{c\tau+d}\right) =  (c\tau+d)\eta(\tau) R^\#(z;\tau).
\end{equation*}
\end{lem}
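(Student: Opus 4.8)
The plan is to reduce the identity to the standard generators $T=\begin{psmallmatrix}1&1\\0&1\end{psmallmatrix}$ and $S=\begin{psmallmatrix}0&-1\\1&0\end{psmallmatrix}$ of $\operatorname{SL}_2(\Z)$. Indeed, the asserted relation is of cocycle type: writing it as $\eta(\gamma\t)\,R^\#(z/(c\t+d);\gamma\t)=(c\t+d)\,\eta(\t)R^\#(z;\t)$, the automorphy factor $c\t+d$ satisfies the cocycle relation under composition, while the rescaling $z\mapsto z/(c\t+d)$ composes accordingly; hence validity on $S$ and $T$ (note $-I=S^2$) propagates to all of $\operatorname{SL}_2(\Z)$. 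It therefore suffices to treat $\t\mapsto\t+1$ (with $z$ fixed) and $\t\mapsto-1/\t$, $z\mapsto z/\t$.

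The $T$-transformation is elementary and follows by comparing $q$-expansions. Since $q=e^{2\pi i\t}$ is invariant under $\t\mapsto\t+1$, the Lerch sum \eqref{eq:RankLerch} shows that $R(\z;q)$ is unchanged, while the prefactors $q^{-\frac1{24}}$, $q^{-\frac16}$ and the sum $\frac12 q^{-\frac16}\sum_\pm\pm\z^{\mp1}S(3z\pm\t;3\t)$ pick up explicit roots of unity. One checks that these combine with the factor $e^{\pi i/12}$ coming from $\eta(\t+1)=e^{\pi i/12}\eta(\t)$ and with the invariance $G_2(\t+1)=G_2(\t)$ of the exponential factor in \eqref{E:Rt} to yield the desired identity.

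The $S$-transformation is the crux. The guiding principle is that the bracketed expression in \eqref{E:Rt} is designed, following Zwegers \cite{Zw}, as the non-holomorphic completion of the mock part $R(\z;q)/(\z^{\frac12}-\z^{-\frac12})$, so that the obstruction to modularity is cancelled by the $S$-contributions. Concretely, I would: (i) rewrite $R(\z;q)$ via \eqref{eq:RankLerch} in terms of Appell--Lerch sums whose completions transform as weight-$\tfrac12$ Jacobi forms by Zwegers' theorem; (ii) identify $\frac12 q^{-\frac16}\sum_\pm\pm\z^{\mp1}S(3z\pm\t;3\t)$ with exactly the error-function terms needed to complete these sums at level $3$, so that the full bracket inherits a clean transformation under $\t\mapsto-1/\t$; and (iii) track all automorphy and rescaling factors. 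Zwegers' inversion formula for the completed Mordell integral emits the elliptic (index) exponential of the underlying index-$(-\tfrac32)$ Jacobi form, namely a factor $e^{-3\pi i z^2/\t}$.

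It remains to check that the exponential factor in \eqref{E:Rt} absorbs this index exponential together with the $G_2$-anomaly. Using \eqref{eq:Gk} with $(c,d)=(1,0)$ we have $G_2(-1/\t)=\t^2 G_2(\t)+\frac{i\t}{4\pi}$, so that $e^{12\pi^2 G_2(-1/\t)(z/\t)^2}=e^{12\pi^2 G_2(\t)z^2}\,e^{3\pi i z^2/\t}$; the additional factor $e^{3\pi i z^2/\t}$ cancels precisely the factor $e^{-3\pi i z^2/\t}$ produced in step (iii), leaving the clean weight-$1$ transformation with the single factor $c\t+d$ (the $\eta$- and $R^\#$-multipliers being inverse to one another by design). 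The main obstacle is steps (ii)--(iii): matching the level-$3$ $S$-terms to Zwegers' completion and carefully bookkeeping the multiplier system together with the various roots of unity and exponentials. As this is in essence the computation carried out in \cite[Subsection~4.1]{B2018}, one may alternatively invoke that reference directly.
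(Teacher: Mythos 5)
The paper gives no proof of Lemma~\ref{lem:Rt-mod-transform} at all: it simply quotes the result from \cite[Subsection~4.1]{B2018}, where $R^\#$ is defined and its transformation is established via Zwegers' theory. Your proposal is therefore not in conflict with the paper --- it ends by invoking exactly that reference --- and the outline you give (reduction to the generators $S$ and $T$, the $T$-case by inspection of $q$-expansions, the $S$-case via the completed Appell--Lerch/level-$3$ error-function terms) is a faithful description of how the cited argument goes. Your one substantive computation checks out: $G_2(-1/\tau)=\tau^2G_2(\tau)+\frac{i\tau}{4\pi}$ gives $e^{12\pi^2G_2(-1/\tau)(z/\tau)^2}=e^{12\pi^2G_2(\tau)z^2}e^{3\pi iz^2/\tau}$, which is precisely what is needed to absorb the index-$\left(-\frac32\right)$ exponential $e^{-3\pi iz^2/\tau}$ coming from the Jacobi transformation, and the parity check $R^\#(-z;\tau)=-R^\#(z;\tau)$ needed for $-I=S^2$ also holds. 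The only caveat is that steps (ii)--(iii) --- matching the two $S(3z\pm\tau;3\tau)$ terms to Zwegers' completion and bookkeeping the eta-multiplier --- are asserted rather than carried out, so as a self-contained proof this is a sketch; but since the paper itself discharges the entire lemma by citation, your proposal does at least as much as the paper does.
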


\subsection{The rank-crank PDE}
The rank-crank PDE of Atkin and Garvan \cite[Theorem 1.1]{AG} relates the rank and crank generating functions by a differential equation. To state it, we define the {\it heat operator}\,\footnote{We point out that we re-normalize the standard heat operator by multiplying by a factor of $\frac{1}{4\pi^2}$ (see \cite[p.~33]{EZ1985} for more details).} (of index $-\frac{3}{2}$) as
\[
H:=6q\frac\partial{\partial q}+\left(\zeta\frac\partial{\partial \zeta}\right)^{\!2}.
\]
This heat operator maps Jacobi forms of weight $\frac12$ to weight $\frac52$ and does not change the index. We state a modified version of the rank-crank PDE which is more convenient for us.

\begin{lem}\label{thm:rank-crankPDE}
	We have
	\begin{equation*}
		2 \left(\frac{\zeta^{\frac{1}{2}}(q)_\infty C(\zeta;q)}{1-\zeta}\right)^{\! 3}		=  (H+6G_2(\tau)) \left(\frac{\zeta^{\frac{1}{2}}(q)_\infty R(\zeta;q)}{1-\zeta}\right).
	\end{equation*}
\end{lem}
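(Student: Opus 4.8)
The plan is to derive this modified rank--crank PDE from the original Atkin--Garvan rank--crank PDE of \cite[Theorem 1.1]{AG} by an appropriate change of normalization. The key observation is that the quantities appearing in our statement are precisely the ``cleaned up'' versions of the rank and crank generating functions: we set
\[
\widetilde{C}(\zeta;q) := \frac{\zeta^{\frac12}(q)_\infty\, C(\zeta;q)}{1-\zeta}, \qquad \widetilde{R}(\zeta;q) := \frac{\zeta^{\frac12}(q)_\infty\, R(\zeta;q)}{1-\zeta}.
\]
Using \eqref{eq:crank} and \eqref{eq:RankLerch}, these take the manifestly nicer shapes $\widetilde{C}(\zeta;q) = \frac{\zeta^{\frac12}}{(\zeta q)_\infty(\zeta^{-1}q)_\infty}$ and $\widetilde{R}(\zeta;q) = \zeta^{\frac12}\sum_{n\in\Z}\frac{(-1)^n q^{n(3n+1)/2}}{1-\zeta q^n}$, which are the standard completed-Lerch normalizations that transform as genuine Jacobi forms (up to the usual $\eta$-factor). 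The factor $\zeta^{\frac12}/(1-\zeta)$ symmetrizes the $\zeta\to\zeta^{-1}$ behaviour and removes the denominator pole, so that $\widetilde C, \widetilde R$ are the objects on which the heat operator acts cleanly.

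First I would write out the original rank--crank PDE of Atkin--Garvan in their normalization and record exactly which generating functions appear there, along with the precise definition of their (un-renormalized) heat operator, paying attention to the factor $\tfrac{1}{4\pi^2}$ alluded to in the footnote. The heat operator $H = 6q\partial_q + (\zeta\partial_\zeta)^2$ given here has index $-\tfrac32$, matching the index of $\widetilde R$; the coefficient $6$ in front of $q\partial_q$ is exactly three times the coefficient one would expect for an index $-\tfrac12$ object like $\widetilde C$, reflecting the index ratio $-\tfrac32 : -\tfrac12$ mentioned in the third remark after Theorem~\ref{thm:main}. Next I would track how conjugating the generating function by $\zeta^{\frac12}(q)_\infty/(1-\zeta)$ transforms the heat operator: since $\zeta\partial_\zeta$ and $q\partial_q$ do not commute with multiplication by this prefactor, there will be correction terms, and the additive $6G_2(\tau)$ on the right-hand side should emerge precisely from the $q\partial_q$ acting on the $(q)_\infty$ (and from the anomalous transformation of $G_2$), via the identity $6\,q\partial_q \log(q)_\infty = -6\,G_2$ up to the constant term, i.e.\ the appearance of $H + 6G_2$ is a quasimodular completion of the naive heat operator making it act covariantly on the renormalized $\widetilde R$.

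The main obstacle I anticipate is the careful bookkeeping of these conjugation corrections: one must verify that all the terms produced by commuting $H$ past the prefactor $\zeta^{\frac12}(q)_\infty/(1-\zeta)$ collapse exactly into the single shift $6G_2$, with the overall factor of $2$ on the left-hand side surviving correctly. This is essentially a routine but delicate computation using $\zeta\partial_\zeta\,\zeta^{\frac12} = \tfrac12\zeta^{\frac12}$, the logarithmic derivative identity $q\partial_q\log(q)_\infty = -\sum_{n\ge1}\frac{nq^n}{1-q^n} = G_2 + \tfrac1{24}$ (matching conventions with the definition of $G_2$ so that the constant $\tfrac1{24}$ is absorbed), and the product rule applied to the Lerch-type denominators $1-\zeta q^n$ in $\widetilde R$. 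Once the transformed operator is shown to equal $H + 6G_2(\tau)$ acting on $\widetilde R$, and the right-hand side of the Atkin--Garvan identity is rewritten in terms of $\widetilde C^3$, the factor of $2$ drops out of comparing normalizations, and the stated identity follows. I would close by double-checking the constant-term and $\zeta$-symmetry to make sure no stray factor of $2\pi i$ from the $\zeta = e^{2\pi i z}$ convention has been dropped.
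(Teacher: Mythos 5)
Your plan is essentially the paper's proof: both deduce the identity from the known Atkin--Garvan rank--crank PDE by renormalizing the generating functions, with the $6G_2$ shift arising exactly as you predict from $6q\frac{\partial}{\partial q}$ hitting the eta/infinite-product prefactor (the paper starts from the formulation in \cite[Theorem 14.28]{BFOR}, where the $\zeta$-dependent prefactors already match, so only an $\eta$-factor needs conjugating, packaged as Lemma~\ref{lem:AnyF}). Just fix the sign in your auxiliary identity: with the paper's conventions $q\frac{\partial}{\partial q}\log (q)_\infty=-\left(G_2+\tfrac1{24}\right)$, not $G_2+\tfrac1{24}$.
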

\begin{proof}
In \cite[Theorem 14.28]{BFOR}, the rank-crank PDE of Atkin--Garvan was formulated as
	\begin{equation*}
		2\eta^2(\tau) \left(\frac{\zeta^{\frac{1}{2}}q^{-\frac{1}{24}}C(\zeta;q)}{1-\zeta}\right)^{\! 3} = H \left(\frac{\zeta^{\frac{1}{2}}q^{-\frac{1}{24}}R(\zeta;q)}{1-\zeta}\right).
	\end{equation*}
Multiplying both sides by $\eta(\tau)=q^{\frac{1}{24}}(q)_\infty$ yields
	\begin{equation*}
		2 \left(\frac{\zeta^{\frac{1}{2}}(q)_\infty C(\zeta;q)}{1-\zeta}\right)^{\! 3}
		= \eta(\tau) H \left(\frac{1}{\eta(\tau)}\frac{\zeta^{\frac{1}{2}}(q)_\infty R(\zeta;q)}{1-\zeta}\right).
	\end{equation*}
We obtain the lemma, using the definition of $H$ and Lemma~\ref{lem:AnyF} with
\[
f(z;\tau)=\frac{\zeta^{\frac{1}{2}}(q)_\infty R(\zeta;q)}{1-\zeta}.\qedhere
\]
\end{proof}

We also require the following lemma, inspired by the observation in \cite{BZ} that the non-holomorphic part of a certain non-holomorphic Jacobi form, closely related to $R^\#$, is annihilated by the heat operator.
\begin{lem}\label{lem:H-on-non-hol-R}
	 We have
	 	\begin{align*}
			H\!\lrb{ R^{\#}(z;\t)e^{-12\pi^2G_2(\tau)z^2}} = -H\!\lrb{\frac{\zeta^{\frac{1}{2}} R(\zeta;q)q^{-\frac{1}{24}}}{1-\zeta}}.
		\end{align*}
\end{lem}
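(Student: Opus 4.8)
The plan is to reduce the identity to the single assertion that the heat operator annihilates the non-holomorphic completion term of $R^\#$. First I substitute the definition \eqref{E:Rt} of $R^\#$ into the left-hand side. Multiplying by $e^{-12\pi^2 G_2(\t)z^2}$ exactly cancels the Gaussian factor, so that
\[
R^\#(z;\t)e^{-12\pi^2 G_2(\t)z^2} = \frac{R(\z;q)}{\z^{\frac12}-\z^{-\frac12}}q^{-\frac1{24}} + \frac12 q^{-\frac16}\sum_\pm \pm\z^{\mp1}S(3z\pm\t;3\t).
\]
Since $\z^{\frac12}-\z^{-\frac12} = -\z^{-\frac12}(1-\z)$, the first summand equals $-\frac{\z^{\frac12}R(\z;q)q^{-\frac1{24}}}{1-\z}$, which is precisely the negative of the argument of $H$ on the right-hand side. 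Applying $H$ and using linearity, the claimed identity becomes equivalent to
\[
H\!\lrb{\frac12 q^{-\frac16}\sum_\pm \pm\z^{\mp1}S(3z\pm\t;3\t)} = 0,
\]
i.e.\ the heat operator kills the non-holomorphic part. This is exactly the content hinted at by the remark preceding the lemma and attributed to \cite{BZ}.

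For the remaining claim I would compute $H$ directly on the series for $S$. Writing $H = \frac{3}{\pi i}\frac{\partial}{\partial\t} - \frac1{4\pi^2}\frac{\partial^2}{\partial z^2}$ and using the chain rule for the substitution $w := 3z\pm\t$, $\sigma := 3\t$, I would track how the holomorphic derivatives act on each summand
\[
\lrb{\sgn(n) - E\!\lrb{\lrb{n+\tfrac{Y}{V}}\sqrt{2V}}}(-1)^{n-\frac12}e^{-\pi i n^2\sigma - 2\pi i n w},
\]
where $Y = \operatorname{Im}(w)$ and $V = \operatorname{Im}(\sigma)$. Here one must keep in mind that $Y$ and $V$ depend on $\overline\t$ and $\overline z$ through the imaginary parts, so the Wirtinger derivatives $\frac{\partial}{\partial\t}$ and $\frac{\partial}{\partial z}$ produce, besides the ``holomorphic'' action on the exponentials $e^{-\pi i n^2\sigma - 2\pi i n w}$ and on the prefactors $q^{-\frac16}\z^{\mp1}$, additional contributions from differentiating the error function via $E'(y) = 2e^{-\pi y^2}$.

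The main obstacle is the bookkeeping of these contributions and showing they cancel. The purely exponential part should vanish because the exponents $-\pi i n^2\sigma - 2\pi i n w$, together with the prefactors $q^{-\frac16}\z^{\mp1}$, satisfy the relevant heat equation once the chain rule is applied; this is where the factor $6$ in $H$, the factor $3$ in the arguments $3z\pm\t$ and $3\t$, and hence the index $-\frac32$ of the underlying Jacobi form, are essential. The error-function contributions produce Gaussians $e^{-\pi u_n^2}$ with $u_n = (n+\frac{Y}{V})\sqrt{2V}$; after simplification these combine with the exponential factors into expressions whose dependence on the sign $\pm$ and on $n$ is arranged so that the first- and second-derivative error-function terms cancel internally and the two sign-choices in $\sum_\pm\pm$ cancel against each other. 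I would organize the verification by collecting the coefficient of each independent Gaussian-times-exponential monomial and checking that it is zero; alternatively, one can deduce the vanishing by directly adapting the computation in \cite{BZ}, identifying our $\frac12 q^{-\frac16}\sum_\pm\pm\z^{\mp1}S(3z\pm\t;3\t)$ with the non-holomorphic part of the Jacobi form treated there.
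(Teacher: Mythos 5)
Your algebraic reduction is exactly the paper's: substituting \eqref{E:Rt} cancels the Gaussian factor, the holomorphic summand equals $-\frac{\zeta^{1/2}R(\zeta;q)q^{-1/24}}{1-\zeta}$, and the lemma becomes the assertion that $H$ annihilates $\frac12 q^{-\frac16}\sum_\pm\pm\zeta^{\mp1}S(3z\pm\tau;3\tau)$. That part is correct. The problem is that you stop there: everything after ``For the remaining claim I would compute\dots'' is a plan, not a proof. You never exhibit the identity that actually makes the terms vanish, and the cancellation mechanism you conjecture is not the right one. In particular, no cancellation between the two sign choices is needed (or available): each of the functions $q^{-\frac16}\zeta^{\mp1}S(3z\pm\tau;3\tau)$ is annihilated by $H$ separately; since $S$ is even, the two are exchanged by $z\mapsto-z$, under which $H$ is invariant, so it suffices to treat the $+$ sign.

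The missing computation is short and should be carried out. By (4.1) of \cite{B2018}, one has
$q^{-\frac16}\zeta^{-1}S(3z+\tau;3\tau)=\sum_{n\in\Z-\frac16}\bigl(\sgn\bigl(n-\frac13\bigr)-E\bigl(\bigl(n+\frac yv\bigr)\sqrt{6v}\bigr)\bigr)(-1)^{n-\frac56}q^{-\frac{3n^2}{2}}\zeta^{-3n}$.
Termwise, $H\bigl(q^{-\frac{3n^2}{2}}\zeta^{-3n}\bigr)=0$, and the product rule $H(fg)=H(f)g+fH(g)+2\bigl(\zeta\frac{\partial}{\partial\zeta}f\bigr)\bigl(\zeta\frac{\partial}{\partial\zeta}g\bigr)$ reduces the claim to
$H\bigl(E\bigl(\bigl(n+\frac yv\bigr)\sqrt{6v}\bigr)\bigr)-\frac{3n}{\pi i}\frac{\partial}{\partial z}E\bigl(\bigl(n+\frac yv\bigr)\sqrt{6v}\bigr)=\frac{3}{8\pi^2v}\bigl[2\pi wE'(w)+E''(w)\bigr]_{w=(n+\frac yv)\sqrt{6v}}=0$,
which holds because $E'(x)=2e^{-\pi x^2}$ and $E''(x)=-4\pi xe^{-\pi x^2}$. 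Without identifying this second-order ODE for the error function (or genuinely importing the corresponding computation from \cite{BZ}, which you only gesture at), the proposal does not establish the lemma.
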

\begin{proof}
	Using \eqref{E:Rt}, it is enough to show
	\begin{equation*}
		H\!\left(q^{-\frac16} \zeta^{-1} S(3z+\tau;3\tau)\right) = 0.
	\end{equation*}
	From (4.1) of \cite{B2018}, we have
	\begin{equation}\label{E:S}
		q^{-\frac16} \zeta^{-1} S(3z+\tau;3\tau)
		= \!\sum_{n\in\Z-\frac16} \!\left(\sgn\!\left(n-\frac13\right)-E\!\left(\left(n+\frac yv\right)\sqrt{6v}\right)\!\right) (-1)^{n-\frac56} q^{-\frac{3n^2}2} \zeta^{-3n}.
	\end{equation}
	Thus the claim follows once we show that
	\begin{equation*}
		H\!\left(\!\left(\sgn\left(n-\frac13\right)-E\left(\left(n+\frac yv\right)\sqrt{6v}\right)\right)q^{-\frac{3n^2}2}\zeta^{-3n}\right) = 0.
	\end{equation*}
	Noting that $H(q^{-\frac{3n^2}2}\zeta^{-3n})=0$ and
	\begin{equation*}
		H(fg) = H(f)g + fH(g) + 2\lrb{\zeta\frac\partial{\partial\zeta} f} \lrb{\zeta\frac\partial{\partial\zeta} g},
	\end{equation*}
	it is enough to prove that
	\begin{equation*}
		0 = H\!\left(E\left(\left(n\!+\!\frac yv\right)\sqrt{6v}\right)\right) - \frac{3n}{\pi i} \frac\partial{\partial z} E\left(\left(n\!+\!\frac yv\right)\sqrt{6v}\right)=
		\frac3{8\pi^2v} [2\pi wE'(w)+E''(w)]_{w=\left(n+\frac yv\right)\sqrt{6v}}\raisebox{-5pt}{.}
	\end{equation*}
	Thus we want to show that
	\begin{equation*}
		2\pi xE'(x) + E''(x) = 0,
	\end{equation*}
	which holds since $E'(x) = 2e^{-\pi x^2}$ and $E''(x) = -4\pi xe^{-\pi x^2}$.
\end{proof}

\subsection{P\'{o}lya cycle index polynomials}
	We require a result about P\'{o}lya cycle index polynomials in the case of symmetric group $S_n$, the set of permutations of the symbols $x_1,x_2,\ldots,x_n$, see \cite{S1999} and \cite[Lemma 2.1]{Traces} for more details.
	\begin{lem}[Example 5.2.10 of \cite{S1999}]\label{lem:cycle-index}
		We have
		\begin{equation*}
			\sum_{{n\ge0}}\sum_{\lambda\vdash n}\prod_{k=1}^n\frac{x_k^{m_k}}{m_k!}w^n=\exp\left(\sum_{{k\ge1}}x_k w^k\right)
		\end{equation*}
		as a formal power series in $w$, where $\lambda=(1^{m_1}, 2^{m_2}, \ldots, n^{m_n})\vdash n$.
	\end{lem}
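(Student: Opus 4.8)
The plan is to prove Lemma~\ref{lem:cycle-index} by factoring the exponential on the right-hand side and matching the resulting monomials with partitions. First I would note that, since each summand $x_k w^k$ has positive $w$-degree, the usual identity $\exp(A+B)=\exp(A)\exp(B)$ extends to the infinite sum, so that
\[
\exp\!\left(\sum_{k\ge1} x_k w^k\right) = \prod_{k\ge1}\exp\!\left(x_k w^k\right)
\]
is a well-defined formal power series in $w$: each factor equals $1$ plus terms of $w$-degree $\ge k$, so only finitely many factors contribute to any fixed power of $w$.

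Next I would expand each factor using the exponential series,
\[
\exp\!\left(x_k w^k\right) = \sum_{m_k\ge0}\frac{x_k^{m_k}}{m_k!}\,w^{k m_k}.
\]
Multiplying out the product over $k$, a general term is indexed by a finitely-supported sequence $(m_k)_{k\ge1}$ of nonnegative integers and equals
\[
\left(\prod_{k\ge1}\frac{x_k^{m_k}}{m_k!}\right)w^{\sum_{k\ge1} k m_k}.
\]
The key observation is that such a sequence $(m_k)_{k\ge1}$ with $\sum_{k\ge1} k m_k = n$ is precisely the data of a partition $\lambda=(1^{m_1},2^{m_2},\ldots)\vdash n$. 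Collecting terms by the total $w$-degree $n$ therefore yields
\[
\prod_{k\ge1}\exp\!\left(x_k w^k\right) = \sum_{n\ge0}\left(\sum_{\lambda\vdash n}\prod_{k\ge1}\frac{x_k^{m_k}}{m_k!}\right)w^n,
\]
which is the desired left-hand side once one observes that $m_k=0$ (hence $x_k^{m_k}/m_k!=1$) for $k>n$, so that $\prod_{k\ge1}$ may be replaced by $\prod_{k=1}^n$.

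I do not expect a genuine obstacle here; the only points requiring care are the interpretation of the infinite product as a formal power series and the bijection between finitely-supported exponent sequences and partitions, both of which are routine. This is exactly the ``exponential formula'' for the cycle index, and one may alternatively simply cite Example~5.2.10 of \cite{S1999} as the authors do.
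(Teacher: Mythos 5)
Your argument is correct and complete: the factorization of the exponential, the termwise expansion, and the identification of finitely-supported exponent sequences with partitions are exactly the standard ``exponential formula'' computation. The paper itself gives no proof of this lemma---it is stated with a citation to Example~5.2.10 of Stanley---so your write-up simply supplies the routine verification that the reference contains, and it does so without any gaps.
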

	
\subsection{Divisibility of multinomials}
Below, we require the following lemma regarding the divisibility of multinomial coefficients, which follows directly from B\'ezout's Lemma.
\begin{lem}\label{lem:div}
For $a_1,\ldots,a_\ell\in\N$ with $\sum_{j=1}^\ell a_j=n$, we have
\[
\frac{n}{\gcd(a_1,\ldots,a_\ell)}\,\Big|\,\binom{n}{a_1,a_2,\ldots,a_\ell}.
\]
\end{lem}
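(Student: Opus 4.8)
The plan is to prove the divisibility statement
\[
\frac{n}{\gcd(a_1,\ldots,a_\ell)}\,\Big|\,\binom{n}{a_1,a_2,\ldots,a_\ell}
\]
by reducing it to the classical fact that $a \mid \binom{n}{a}$ whenever $n \mid a\binom{n}{a}$ fails to obstruct, or more precisely to the well-known binomial identity $\frac{n}{a_j}\binom{n}{a_j}=\binom{n-1}{a_j-1}\cdot\frac{n}{a_j}\cdot\frac{a_j}{n}\cdots$; let me instead organize it around the cleaner single-index estimate. First I would recall the elementary identity, valid for each $j$,
\begin{equation*}
a_j \binom{n}{a_1,\ldots,a_\ell} = n\binom{n-1}{a_1,\ldots,a_j-1,\ldots,a_\ell},
\end{equation*}
which follows immediately from writing the multinomial coefficient as $\frac{n!}{a_1!\cdots a_\ell!}$ and factoring $n=n\cdot(n-1)!/(n-1)!$ out of the numerator while peeling one factor off $a_j!$. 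This shows that $n \mid a_j\binom{n}{a_1,\ldots,a_\ell}$ for every $j\in\{1,\ldots,\ell\}$, because the right-hand side is an integer multiple of $n$.

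Next I would invoke B\'ezout's Lemma in the form stated as the motivation for the lemma. Writing $d:=\gcd(a_1,\ldots,a_\ell)$, B\'ezout guarantees integers $c_1,\ldots,c_\ell\in\Z$ with $\sum_{j=1}^\ell c_j a_j = d$. Multiplying the multinomial coefficient by $d$ and using the previous step termwise gives
\begin{equation*}
d\binom{n}{a_1,\ldots,a_\ell} = \sum_{j=1}^\ell c_j\, a_j \binom{n}{a_1,\ldots,a_\ell} = n\sum_{j=1}^\ell c_j \binom{n-1}{a_1,\ldots,a_j-1,\ldots,a_\ell},
\end{equation*}
so $n \mid d\binom{n}{a_1,\ldots,a_\ell}$. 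Rearranging, $\frac{n}{d}$ divides $\binom{n}{a_1,\ldots,a_\ell}$ once we check that $\frac{n}{d}$ is an integer, which holds because $d=\gcd(a_1,\ldots,a_\ell)$ divides $\sum_j a_j = n$. This completes the argument.

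I do not expect any serious obstacle here, since both the single-index identity and B\'ezout's Lemma are completely standard; the only point requiring mild care is the bookkeeping that $n \mid d\binom{n}{a_1,\ldots,a_\ell}$ together with $d \mid n$ really does yield $\frac{n}{d}\mid\binom{n}{a_1,\ldots,a_\ell}$ rather than merely $n \mid d\binom{n}{a_1,\ldots,a_\ell}$. Concretely, from $n \mid d\binom{n}{a_1,\ldots,a_\ell}$ write $d\binom{n}{a_1,\ldots,a_\ell}=nt$ for some $t\in\Z$; dividing by $d$ (legitimate since the left side is divisible by $d$, or equivalently by working with the integer $\frac{n}{d}$) gives $\binom{n}{a_1,\ldots,a_\ell}=\frac{n}{d}\,t$, which is exactly the claimed divisibility. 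One could alternatively avoid all subtlety by noting that the statement is really about the single quantity $\frac{n}{d}$ dividing the coefficient, and the boxed identity $a_j\binom{n}{\cdots}=n\binom{n-1}{\cdots}$ already shows $\frac{n}{\gcd(n,a_j)}\mid\binom{n}{\cdots}$ for each $j$, after which taking the least common multiple over $j$ and comparing with $\frac{n}{d}$ finishes the proof; I would present whichever of these two routes reads most cleanly.
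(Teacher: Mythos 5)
Your proof is correct and follows exactly the route the paper intends: the paper states Lemma~\ref{lem:div} without proof, remarking only that it ``follows directly from B\'ezout's Lemma,'' and your argument via the identity $a_j\binom{n}{a_1,\ldots,a_\ell}=n\binom{n-1}{a_1,\ldots,a_j-1,\ldots,a_\ell}$ combined with a B\'ezout relation $\sum_j c_ja_j=\gcd(a_1,\ldots,a_\ell)$ is precisely the standard way to fill in that omitted proof. The final bookkeeping step (passing from $n\mid d\binom{n}{a_1,\ldots,a_\ell}$ and $d\mid n$ to $\tfrac{n}{d}\mid\binom{n}{a_1,\ldots,a_\ell}$) is handled correctly.
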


\section{Eisenstein--type series and the proof of Theorems~\ref{thm:main} and \ref{thm:invariant-space}}\label{sec:mock_Eisenstein}
\subsection{Modularity and completion}\label{Modularityandcompletion}
Motivated by Lemma~\ref{lem:cranck}, we define the Eisenstein--type series~$f_k$ in terms of the rank generating function as follows:
\begin{equation}\label{eq:f}
	R(\zeta;q) =: \frac{\sin\!\left(\pi z\right)}{\pi z(q)_\infty} \exp\!\left(2\sum_{k\geq 1} f_{k} (\tau)\frac{(2\pi iz)^{k}}{k!}\right).
\end{equation}
\begin{rem}\label{rem:f_odd} Since $R$ is invariant under $\z\mapsto\z^{-1}$, i.e., $z\mapsto-z$, we have $f_{k}=0$ if $k$ is odd.
\end{rem}
In the remainder of this section, we show that the $f_k$ satisfies all of the properties in Theorem~\ref{thm:main}.
To define their quasi-completions, we let
\begin{align}
	R^\circ(z,\overline z;\tau,\overline\tau) :=\ & 2i\sin(\pi z)q^{\frac{1}{24}} R^{\#}(z;\t)e^{-12\pi^2G_2(\tau)z^2} \nonumber \\
							=\ &R(\zeta;q) + \frac12 q^{-\frac18} \left(\zeta^\frac12-\zeta^{-\frac12}\right) \sum_\pm \pm\zeta^{\mp1} S(3z\pm\tau;3\tau). \label{eq:widehatR}
\end{align}
\begin{lem}\label{L:Jacobi}
We have
\begin{equation*}
	\lim_{\overline\tau\to-i\infty} R^\circ(z,\overline z;\tau,\overline\tau) = R(\zeta;q).
\end{equation*}
\end{lem}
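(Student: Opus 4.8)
The plan is to use the explicit decomposition \eqref{eq:widehatR}, which writes $R^\circ$ as the sum of the holomorphic rank generating function $R(\zeta;q)$ and the non-holomorphic correction
\[
\frac12 q^{-\frac18}\lrb{\zeta^\frac12-\zeta^{-\frac12}}\sum_\pm \pm\zeta^{\mp1} S(3z\pm\tau;3\tau).
\]
Since $R(\zeta;q)$ does not depend on $\overline\tau$, it suffices to show that this correction term tends to $0$ as $\overline\tau\to-i\infty$, and for this it is enough to show $S(3z\pm\tau;3\tau)\to0$.

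First I would isolate where the antiholomorphic variable enters. Using the expansion \eqref{E:S} for $q^{-\frac16}\zeta^{-1}S(3z+\tau;3\tau)$ (and the entirely analogous expansion for the term $S(3z-\tau;3\tau)$), the dependence on $\overline\tau$ sits solely in the coefficient $\sgn(n-\frac13)-E((n+\frac yv)\sqrt{6v})$, as the remaining factors $(-1)^{n-\frac56}q^{-\frac{3n^2}2}\zeta^{-3n}$ are holomorphic. Recall $v=\frac{\tau-\overline\tau}{2i}$ and $y=\frac{z-\overline z}{2i}$; treating $\tau$ and $\overline\tau$ as independent and sending $\overline\tau\to-i\infty$ with $\tau,z,\overline z$ fixed forces $\operatorname{Re}(v)\to+\infty$ while $\operatorname{Im}(v)$ and $y$ stay bounded. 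Consequently $\frac yv\to0$, and the argument $(n+\frac yv)\sqrt{6v}$ has real part tending to $+\infty$ for $n>0$ and to $-\infty$ for $n<0$.

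Next I would take the termwise limit. From $E(w)=2\int_0^w e^{-\pi t^2}\,dt$ one has $E(w)\to\sgn(\operatorname{Re}w)$ as $|\operatorname{Re}w|\to\infty$ within a fixed vertical strip, so each coefficient satisfies $\sgn(n-\frac13)-E((n+\frac yv)\sqrt{6v})\to0$; here one checks that $\sgn(n)=\sgn(n-\frac13)$ for every $n\in\Z-\frac16$, so the limiting sign matches and no summand survives. Thus every term of the series tends to $0$.

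The main obstacle is justifying the interchange of the limit with the infinite sum, because the holomorphic factor $q^{-\frac{3n^2}2}$ grows like $e^{3\pi n^2\operatorname{Im}(\tau)}$ as $|n|\to\infty$. Here I would invoke the complementary-error-function bound $|\sgn(x)-E(x)|\ll e^{-\pi x^2}$, valid uniformly for arguments in a fixed vertical strip, which produces Gaussian decay of the shape $e^{-6\pi\operatorname{Re}(v)\,n^2}$ in each coefficient. Since $\operatorname{Re}(v)\to+\infty$ while $\operatorname{Im}(\tau)$ is fixed, for $\overline\tau$ sufficiently deep the product of the coefficient and the holomorphic factor is dominated by a fixed convergent series, so dominated convergence applies and the whole sum tends to $0$. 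This gives $S(3z\pm\tau;3\tau)\to0$, hence the correction vanishes and $\lim_{\overline\tau\to-i\infty}R^\circ(z,\overline z;\tau,\overline\tau)=R(\zeta;q)$.
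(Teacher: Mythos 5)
Your proof is correct and follows essentially the same route as the paper: decompose via \eqref{eq:widehatR}, expand $S(3z\pm\tau;3\tau)$ as in \eqref{E:S}, take the termwise limit using $E\to\sgn$ together with $\sgn(n-\tfrac13)=\sgn(n)$ for $n\in\Z-\tfrac16$, and handle the minus sign by the evenness of $S$. The only difference is that you explicitly justify the interchange of limit and sum with the Gaussian bound on $\sgn-E$ (the paper leaves this implicit here, though it invokes the same bound from Zwegers in the proof of Lemma~\ref{lem:limit}), which is a welcome addition rather than a deviation.
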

\begin{proof}
By \eqref{eq:widehatR}, we have 
\begin{equation*}
	\lim_{\overline\tau\to-i\infty} R^\circ(z,\overline z;\tau,\overline\tau) = R(\zeta;q) + \frac12 q^{-\frac18} \left(\zeta^\frac12-\zeta^{-\frac12}\right) \sum_\pm \pm\zeta^{\mp1} \lim_{\overline\tau\to-i\infty} S(3z\pm\tau;3\tau).
\end{equation*}
From equation (4.1) of \cite{B2018}, we directly obtain
\begin{multline*}
	\lim_{\overline\tau\to-i\infty} q^{-\frac16} \zeta^{-1} S(3z+\tau;3\tau)\\
	= \sum_{n\in\Z-\frac16} \left(\sgn\left(n-\frac13\right)-\lim_{\overline\tau\to-i\infty}E\left(\left(n+\frac yv\right)\sqrt{6v}\right)\right) (-1)^{n-\frac56} q^{-\frac{3n^2}2} \zeta^{-3n}.
\end{multline*}
Now
\begin{equation*}
	\lim_{\overline\tau\to-i\infty}E\left(\left(n+\frac yv\right)\sqrt{6v}\right) = \lim_{v\to\infty} E\left(n\sqrt{6v}+\frac{\sqrt6y}{\sqrt v}\right) = \sgn(n).
\end{equation*}
As $\sgn(n-\frac13) = \sgn(n)$ for $n\in \Z-\frac{1}{6}$ and $S(3z-\tau;3\tau) = S(-3z+\tau;3\tau)$, the claimed statement follows.
\end{proof}

We next introduce quasi-completions $f_k^*$ of $f_{k}$.
First, we define
\begin{equation}\label{eq:defRhat}
	\frac{(q)_\infty R^\circ(z,\overline z; \tau, \overline\tau)}{2i\sin(\pi z)} =: \frac1{2\pi iz}\exp\!\left(2\sum_{k,\ell\ge0}f^*_{k,\ell}(\tau, \overline\tau) \frac{(2\pi iz)^k}{k!}\frac{(2\pi i\overline z)^\ell}{\ell!}\right).
\end{equation}
We let\footnote{Note that here we suppress the dependence on $\overline\tau$.} $R^*(z;\tau)$ be the constant term of $R^\circ(z,\overline z; \tau, \overline\tau)$ in the Taylor expansion in~$\overline{z}$ and set
\begin{equation*}
	f^*_k(\tau) := f^*_{k,0}(\tau,\overline\tau).
\end{equation*}
In terms of these, we define\footnote{Again $\mathbb F(z;\tau)$ depends on $\overline\tau$.} 
\begin{equation}\label{Rs}
	\mathbb F(z;\tau) := \frac{(q)_\infty R^*(z;\tau)}{2i\sin(\pi z)} = \frac1{2\pi iz}\exp\!\left(2\sum_{k\ge0}f^*_k(\tau) \frac{(2\pi iz)^k}{k!}\right).
\end{equation}
Note that $f^*_k=0$ for $k$ odd. We are now ready to prove Theorem~\ref{thm:main}~(2).
\begin{lem}\label{lem:completion}
The function $f_{k}^*$ is a quasi-completion of $f_k$. In particular, for $\left(\begin{smallmatrix}
a&b\\ c&d
\end{smallmatrix}\right)\in\mathrm{SL}_2(\Z)$,
		\begin{equation*}
				f^*_{k}\left(\frac{a\tau+b}{c\tau+d}\right) = \begin{cases}
		(c\tau+d)^{k} f^*_{k}(\tau)&\text{if $k\neq2$},\\
		(c\tau+d)^2f^*_2(\tau) + \frac{3ic}{4\pi}(c\tau+d)&\text{if $k=2$}.
	\end{cases}
		\end{equation*}
\end{lem}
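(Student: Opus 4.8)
The plan is to derive the transformation of $f_k^*$ directly from the modularity of $R^\#$ established in Lemma~\ref{lem:Rt-mod-transform}, transported through the definitions \eqref{eq:widehatR}, \eqref{eq:defRhat}, and \eqref{Rs}. The key observation is that $f_k^*$ is defined as a Taylor coefficient (in $z$) of the logarithm of $\mathbb{F}(z;\tau)$, which in turn is built from $R^\circ$ by stripping off the elementary factors $\frac{(q)_\infty}{2i\sin(\pi z)}$ and projecting onto the constant term in $\overline{z}$. So first I would compute how the \emph{whole} generating object $\mathbb{F}(z;\tau)$, or more conveniently $R^\circ$, transforms under $\left(\begin{smallmatrix} a&b\\ c&d\end{smallmatrix}\right)\in\mathrm{SL}_2(\Z)$, and then extract the coefficient of $(2\pi i z)^k/k!$ in the logarithm.

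The first main step is to assemble the transformation of $R^\circ$. Since $R^\circ(z,\overline z;\tau,\overline\tau)=2i\sin(\pi z)\,q^{\frac1{24}}R^\#(z;\tau)e^{-12\pi^2 G_2(\tau)z^2}$ and $\eta(\tau)=q^{\frac1{24}}(q)_\infty$, Lemma~\ref{lem:Rt-mod-transform} gives the clean multiplier behaviour of $\eta R^\#$. I would combine this with the known transformation \eqref{eq:Gk} of $G_2$, namely $G_2\!\left(\frac{a\tau+b}{c\tau+d}\right)=(c\tau+d)^2G_2(\tau)+\frac{ic}{4\pi}(c\tau+d)$, substituting $z\mapsto \frac{z}{c\tau+d}$ throughout. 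The exponential factor $e^{-12\pi^2 G_2 z^2}$ is precisely what absorbs the anomalous $G_2$ term: under the substitution $z\mapsto\frac{z}{c\tau+d}$, the term $\frac{ic}{4\pi}(c\tau+d)$ in $G_2$ produces an exponential factor $\exp\!\left(-12\pi^2\cdot\frac{ic}{4\pi(c\tau+d)}z^2\right)=\exp\!\left(\frac{-3\pi i c}{c\tau+d}z^2\right)$, and I would track exactly this factor, since it is what ultimately yields the $\frac{3ic}{4\pi}$ anomaly in the $k=2$ case. The net result should be a functional equation of the schematic form
\begin{equation*}
	\frac{(q)_\infty R^\circ\!\left(\tfrac{z}{c\tau+d},\ldots;\tfrac{a\tau+b}{c\tau+d},\ldots\right)}{2i\sin\!\left(\tfrac{\pi z}{c\tau+d}\right)} = (c\tau+d)\,e^{\frac{-3\pi i c}{c\tau+d}z^2}\,\frac{(q)_\infty R^\circ(z,\ldots;\tau,\ldots)}{2i\sin(\pi z)},
\end{equation*}
where the factor $(c\tau+d)$ comes from the $\eta$-multiplier in Lemma~\ref{lem:Rt-mod-transform}. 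The same identity holds verbatim after projecting onto the constant $\overline z$-term, giving the transformation of $\mathbb{F}$.

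The final step is the coefficient extraction. Taking $\log$ of \eqref{Rs} and writing $\log\mathbb{F}(z;\tau)=-\log(2\pi i z)+2\sum_{k\ge0}f^*_k(\tau)\frac{(2\pi i z)^k}{k!}$, I apply $\log$ to the functional equation above. On the left I substitute $z\mapsto z$ in the transformed object; the prefactors contribute $\log(c\tau+d)$, the term $-\log\!\left(2\pi i \frac{z}{c\tau+d}\right)=-\log(2\pi i z)+\log(c\tau+d)$, and the Gaussian contributes $\frac{-3\pi i c}{c\tau+d}z^2$. Writing $z\mapsto (c\tau+d)z$ on the holomorphic side converts $2\sum_k f^*_k\!\left(\frac{a\tau+b}{c\tau+d}\right)\frac{(2\pi i z)^k}{k!}$ into the transformed coefficients; matching powers of $z$ term by term yields $f^*_k\!\left(\frac{a\tau+b}{c\tau+d}\right)=(c\tau+d)^k f^*_k(\tau)$ for $k\neq 2$, while the $z^2$-coefficient $\frac{-3\pi i c}{c\tau+d}z^2$ contributes exactly the extra summand, which after matching the normalisation $\frac{(2\pi i z)^2}{2!}$ produces $\frac{3ic}{4\pi}(c\tau+d)$. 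Finally, that $f_k^*$ is a quasi-completion in the sense required follows because $\lim_{\overline\tau\to-i\infty}R^\circ=R(\zeta;q)$ by Lemma~\ref{L:Jacobi}, so the $\overline\tau\to-i\infty$ limit of $f^*_k$ recovers $f_k$ from \eqref{eq:f}.

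I expect the main obstacle to be the careful bookkeeping in assembling the transformation of $R^\circ$: one must keep the two independent variables $\tau,\overline\tau$ (and $z,\overline z$) straight, verify that the $\overline z$-constant-term projection commutes with the modular substitution (since $\frac{a\overline\tau+b}{c\overline\tau+d}$ also transforms and the cross terms in the $S$-function involve $v$ and $y$), and confirm that the only surviving non-holomorphic contribution to the $k=2$ anomaly is the Gaussian $G_2$-term rather than any contribution from the Mordell-type integral in $S$. Reconciling the index factor---why the anomaly coefficient is $\frac{3ic}{4\pi}$ rather than $\frac{ic}{4\pi}$ as for $G_2$---is exactly the factor of $3$ arising from $12\pi^2 z^2$ versus the $4\pi$ normalisation, and I would double-check this constant explicitly since it is the one genuinely new feature compared to the classical $G_2$ transformation.
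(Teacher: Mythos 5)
Your proposal is correct and follows essentially the same route as the paper: both derive the transformation by combining Lemma~\ref{lem:Rt-mod-transform} with the $G_2$ anomaly from \eqref{eq:Gk} to get a functional equation for the generating function, extract the coefficient of $(2\pi i z)^2$ to obtain the $\frac{3ic}{4\pi}(c\tau+d)$ term (the paper compares exponentials where you take logarithms, and it carries the full double expansion in $z,\overline z$ via the $f^*_{k,\ell}$ before setting $\ell=0$, which settles the commutation of the $\overline z$-projection you flag), and both conclude the quasi-completion property from Lemma~\ref{L:Jacobi} and \eqref{eq:f}. The constants you track all check out.
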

\begin{proof}
We start by showing the transformation law of $f^*_k$. 
Combining \eqref{eq:defRhat} and \eqref{eq:widehatR} with \eqref{eq:Gk} and Lemma~\ref{lem:Rt-mod-transform}, we obtain
\begin{align*}
	&\frac{c\tau+d}{2\pi iz} \exp\!\left(2\sum_{k,\ell\ge0} f^*_{k,\ell}\left(\frac{a\tau+b}{c\tau+d}, \frac{a\overline\tau+b}{c\overline\tau+d}\right) \frac{\left(\frac{2\pi iz}{c\tau+d}\right)^k}{k!} \frac{\left(\frac{2\pi i\overline z}{c\overline\tau+d}\right)^\ell}{\ell!}\right)\hspace{-4.8cm}\\
	&\hspace{2cm}=\eta\left(\frac{a\tau+b}{c\tau+d}\right)R^\#\left(\frac{z}{c\tau+d}; \frac{a\tau+b}{c\tau+d}\right) e^{-12\pi^2G_2\left(\frac{a\tau+b}{c\tau+d}\right)\left(\frac{z}{c\tau+d}\right)^2}\\
	&\hspace{2cm}= (c\tau+d)\eta(\tau) R^\#(z;\tau) e^{-12\pi^2\left(G_2(\tau)+\frac{ic}{4\pi(c\tau+d)}\right)z^2}\\
	&\hspace{2cm}= \frac{c\tau+d}{2\pi iz}\exp\!\left(2\left(\frac{3 ic}{4\pi(c\tau+d)}\frac{(2\pi iz)^2}{2!} + \sum_{k,\ell\ge0} f^*_{k,\ell}(\tau,\overline\tau)\frac{(2\pi iz)^k}{k!}\frac{(2\pi i\overline{z})^\ell}{\ell!}\right)\right).
\end{align*}
Hence, we get
\begin{equation*}
	f^*_{k,\ell} \left(\frac{a\tau+b}{c\tau+d}, \frac{a\overline\tau+b}{c\overline\tau+d}\right) = \begin{cases}
		(c\tau+d)^k	 (c\overline\tau+d)^\ell f^*_{k,\ell}(\tau,\overline\tau)&\text{if } (k,\ell)\neq(2,0),\\
		(c\tau+d)^2f^*_{2,0}(\tau,\overline\tau) + \frac{3ic}{4\pi}(c\tau+d)&\text{if } (k,\ell)=(2,0).
	\end{cases}
\end{equation*}
So the transformation formula for $f^*_k$ follows.

We next show that
\begin{equation}\label{E:Limit}
	\lim_{\overline\tau\to-i\infty} f^*_k(\tau) = f_k(\tau).
\end{equation}
By \eqref{eq:defRhat}, Lemma~\ref{L:Jacobi}, and \eqref{eq:f} we have
\begin{align*}
	\frac1{2\pi iz} \lim_{\overline\tau\to-i\infty} \exp\!\left(2\sum_{k,\ell\ge0}f^*_{k,\ell}(\tau,\overline\tau)\frac{(2\pi iz)^k}{k!}\frac{(2\pi i\overline z)^\ell}{\ell!}\right) = \frac{(q)_\infty}{2i\sin(\pi z)} \lim_{\overline\tau\to-i\infty} R^\circ(z,\overline z;\tau,\overline\tau)\hspace{.5cm}\\
	 = \frac{(q)_\infty}{2i\sin(\pi z)} R(\z;q) = \frac{1}{2\pi i z}\exp\!\left(2\sum_{k\ge 0}f_k(\tau)\frac{(2\pi iz)^k}{k!}\right).
\end{align*}
Comparing coefficients gives \eqref{E:Limit}.
\end{proof}

We also define
\begin{align}\label{eq:F-hat}
	\widehat{\mathbb{F}}(z;\tau) := \frac{(q)_\infty R^*(z;\tau)e^{-\frac{3\pi z^2}{2v}}}{2i\sin(\pi z)}  =: \frac1{2\pi iz} \exp\!\left(2\sum_{k\ge1}\widehat f_k(\tau)\frac{(2\pi iz)^k}{k!}\right).
\end{align}
Then, by the same argument used to prove Lemma~\ref{lem:completion}, we obtain the following result.
\begin{lem}\label{lem:f^} For $k\in\mathbb N$, we have
\begin{equation*}
	\widehat f_k(\tau) =
	\begin{cases}
		f_k^*(\tau) & \text{if $k\neq 2$},\\
		f_2^*(\tau) + \frac{3}{8\pi v} & \text{if $k=2$}.
	\end{cases}
\end{equation*}
In particular
\begin{equation*}
\widehat f_k\left(\frac{a\tau+b}{c\tau+d}\right) = (c\t+d)^k\widehat f_k(\tau) .
\end{equation*}
\end{lem}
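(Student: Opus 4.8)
The plan is to read off $\widehat f_k$ directly from the factorization relating $\widehat{\mathbb F}$ and $\mathbb F$, and then to deduce the transformation law from the one for $f_k^*$ established in Lemma~\ref{lem:completion}. Comparing the defining equations \eqref{Rs} and \eqref{eq:F-hat}, the two generating functions differ only by the Gaussian factor, namely $\widehat{\mathbb F}(z;\tau) = \mathbb F(z;\tau)\,e^{-\frac{3\pi z^2}{2v}}$. Since $(2\pi iz)^2 = -4\pi^2 z^2$, this factor rewrites as
\[
e^{-\frac{3\pi z^2}{2v}} = \exp\!\left(\frac{3}{8\pi v}(2\pi iz)^2\right) = \exp\!\left(2\cdot\frac{3}{8\pi v}\cdot\frac{(2\pi iz)^2}{2!}\right),
\]
so it contributes purely to the coefficient of $\frac{(2\pi iz)^2}{2!}$ in the exponent.

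First I would take logarithms in \eqref{Rs} and \eqref{eq:F-hat} and equate the resulting power series in $z$. As the prefactors $\frac1{2\pi iz}$ agree, this yields
\[
2\sum_{k\ge1}\widehat f_k(\tau)\frac{(2\pi iz)^k}{k!} = 2\sum_{k\ge0}f_k^*(\tau)\frac{(2\pi iz)^k}{k!} + \frac{3}{8\pi v}(2\pi iz)^2.
\]
Comparing the coefficient of $(2\pi iz)^k$ for each $k$ then gives $\widehat f_k = f_k^*$ for $k\neq 2$ and $\widehat f_2 = f_2^* + \frac{3}{8\pi v}$, which is the first assertion. (The constant terms are consistent because $f_0^*=0$, as one sees by letting $z\to0$ in \eqref{eq:defRhat} and using $R(1;q)=1/(q)_\infty$.)

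Finally, for the transformation law, the case $k\neq2$ is immediate from Lemma~\ref{lem:completion}, since then $\widehat f_k=f_k^*$. For $k=2$ I would combine the anomalous transformation of $f_2^*$ from Lemma~\ref{lem:completion} with $\operatorname{Im}\!\left(\frac{a\tau+b}{c\tau+d}\right)=v|c\tau+d|^{-2}$, obtaining
\[
\widehat f_2\!\left(\tfrac{a\tau+b}{c\tau+d}\right) = (c\tau+d)^2 f_2^*(\tau) + \frac{3ic}{4\pi}(c\tau+d) + \frac{3}{8\pi}\frac{|c\tau+d|^2}{v}.
\]
Writing $c\overline\tau+d=(c\tau+d)-2icv$ gives $|c\tau+d|^2=(c\tau+d)^2-2icv(c\tau+d)$, and substituting this shows that the anomaly term $\frac{3ic}{4\pi}(c\tau+d)$ is cancelled exactly, leaving $(c\tau+d)^2\big(f_2^*(\tau)+\frac{3}{8\pi v}\big)=(c\tau+d)^2\widehat f_2(\tau)$, as claimed.

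There is no substantial obstacle: the argument reduces to coefficient extraction together with one short algebraic cancellation. The only point requiring care is this $k=2$ bookkeeping, namely confirming that the completion term $\frac{3}{8\pi v}$ carries precisely the constant for which its modular defect annihilates the quasimodular anomaly of $f_2^*$. Alternatively, one could rerun the transformation computation from the proof of Lemma~\ref{lem:completion} verbatim while carrying the extra factor $e^{-\frac{3\pi z^2}{2v}}$ and using its transformation; this automatically absorbs the $G_2$-anomaly and is presumably the ``same argument'' alluded to in the statement.
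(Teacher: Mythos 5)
Your proposal is correct and is essentially the argument the paper intends: the identity $\widehat f_k=f_k^*$ for $k\neq2$ and $\widehat f_2=f_2^*+\frac{3}{8\pi v}$ is immediate from comparing the exponents in \eqref{Rs} and \eqref{eq:F-hat} (with $f_0^*=0$ checked as you do), and the modular transformation then follows from Lemma~\ref{lem:completion} together with the cancellation $\frac{3|c\tau+d|^2}{8\pi v}=\frac{3(c\tau+d)^2}{8\pi v}-\frac{3ic}{4\pi}(c\tau+d)$. The paper merely says ``by the same argument used to prove Lemma~\ref{lem:completion}''; your explicit bookkeeping (or the alternative you mention of rerunning that proof with the factor $e^{-3\pi z^2/(2v)}$) is exactly what is meant.
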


\subsection{Limiting behavior of $f_{k}$}\label{S:Asymp}
Next, we determine the behavior of $f_k(\tau)$ as $\tau\to i\infty$.
\begin{lem}\label{lem:limit} For $k\ge2$, we have
\begin{equation*}
	\lim_{\tau\to i\infty} f_k(\tau) = \lim_{\tau\to i\infty} f^*_k(\tau) = \lim_{\tau\to i\infty} \widehat f_k(\tau) = -\frac{B_k}{2k}.
\end{equation*}
\end{lem}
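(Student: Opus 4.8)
The plan is to reduce all three limits to the single power-series identity of Lemma~\ref{lem:ExpB}, which (using $\z=e^{2\pi iz}$, so that $\z^{\frac12}/(\z-1)=1/(2i\sin(\pi z))$) I rewrite as
\[
\frac{\pi z}{\sin(\pi z)} \= \exp\!\left(-\sum_{k\ge2}\frac{B_k}{k}\frac{(2\pi iz)^k}{k!}\right).
\]
First I would treat $f_k$ itself. As $\tau\to i\infty$ we have $q\to0$, so $(q)_\infty\to1$ and, by \eqref{eq:rank}, $R(\z;q)\to1$ (only the $n=0$ summand survives). Setting $b_k:=\lim_{\tau\to i\infty}f_k(\tau)$ and passing to the limit inside \eqref{eq:f} (justified since the relevant expansions converge locally uniformly in $z$ near $0$), I obtain $1=\frac{\sin(\pi z)}{\pi z}\exp(2\sum_{k\ge1}b_k\frac{(2\pi iz)^k}{k!})$, that is $\exp(2\sum_{k\ge1}b_k\frac{(2\pi iz)^k}{k!})=\frac{\pi z}{\sin(\pi z)}$. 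Comparing with the identity above and matching the coefficient of $\frac{(2\pi iz)^k}{k!}$ gives $2b_k=-\frac{B_k}{k}$, i.e.\ $b_k=-\frac{B_k}{2k}$ for $k\ge2$ (and $b_1=0$).

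For $f_k^*$ I would run the same comparison, now starting from \eqref{Rs} (equivalently \eqref{eq:defRhat}): it suffices to prove $\lim_{\tau\to i\infty}R^\circ(z,\overline z;\tau,\overline\tau)=1$, because then $R^*\to1$ (its constant term in $\overline z$) and $(q)_\infty R^*\to1$, so $\lim_{\tau\to i\infty}\mathbb{F}(z;\tau)=\frac1{2i\sin(\pi z)}$, and matching coefficients against Lemma~\ref{lem:ExpB} yields $\lim_{\tau\to i\infty}f_k^*(\tau)=-\frac{B_k}{2k}$. Using \eqref{eq:widehatR}, the first summand $R(\z;q)\to1$ exactly as before, so everything reduces to showing that the $S$-contribution vanishes as $v\to\infty$.

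This last point is the main obstacle, and it is genuinely more delicate than in Lemma~\ref{L:Jacobi}: there $\tau$ (hence $q$) was held fixed while $\overline\tau\to-i\infty$, whereas now $\tau\to i\infty$ forces $q\to0$, so the factors $q^{-\frac{3n^2}2}$ in \eqref{E:S} blow up like $e^{3\pi n^2 v}$. The point is that the accompanying coefficient $\sgn(n-\frac13)-E((n+\frac yv)\sqrt{6v})$ decays like the complementary error function, i.e.\ like $e^{-6\pi v(n+y/v)^2}$, and since $-6\pi v(n+\frac yv)^2+3\pi n^2v=-3\pi n^2 v-12\pi ny-\frac{6\pi y^2}{v}\to-\infty$ for $n\neq0$, each summand tends to $0$; a uniform bound of this type (for $z$ near $0$) then lets me conclude, by dominated convergence, that the whole $S$-sum tends to $0$. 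Together with $E((n+\frac yv)\sqrt{6v})\to\sgn(n)=\sgn(n-\frac13)$ on $\Z-\frac16$, this gives $R^\circ\to1$.

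Finally, for $\widehat f_k$ I would invoke Lemma~\ref{lem:f^}: since $\widehat f_k=f_k^*$ for $k\neq2$ and $\widehat f_2=f_2^*+\frac3{8\pi v}$ with $\frac3{8\pi v}\to0$ as $v\to\infty$, all three limits coincide and equal $-\frac{B_k}{2k}$.
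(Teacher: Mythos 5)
Your proof is correct and follows essentially the same route as the paper: compute $\lim_{\tau\to i\infty}R(\zeta;q)$ and compare with Lemma~\ref{lem:ExpB} for $f_k$, show the $S$-contribution vanishes via the Gaussian decay of $\sgn-E$ (the paper cites Zwegers' Lemma~1.7 for exactly the bound you assert) for $f_k^*$, and invoke Lemma~\ref{lem:f^} for $\widehat f_k$. The only cosmetic difference is that the paper evaluates the first limit via the Lerch-sum representation \eqref{eq:RankLerch} rather than the $q$-hypergeometric series \eqref{eq:rank}.
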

\begin{proof}
	Using \eqref{eq:RankLerch}, we first compute
	\begin{equation*}
		\frac{(q)_\infty R(\zeta;q)}{2i\sin(\pi z)} = -\frac{\zeta^\frac12(q)_\infty}{1-\zeta} R(\zeta;q) = -\zeta^\frac12 \sum_{n\in\Z} \frac{(-1)^nq^\frac{n(3n+1)}2}{1-\zeta q^n} \to -\frac{\zeta^\frac12}{1-\zeta}
	\end{equation*}
	as $\tau\to i\infty$.
	Thus, by \eqref{eq:f},
	\begin{equation*}
		\frac{\zeta^\frac12}{\zeta-1} = \frac1{2\pi iz} \exp\!\left(2\sum_{k\ge1}\lim_{\tau\to i\infty}f_k(\tau)\frac{(2\pi iz)^k}{k!}\right).
	\end{equation*}
	By Lemma~\ref{lem:ExpB}, 
	we obtain the claim for $f_k$.

	To prove the claim for $f^*_k$, by \eqref{eq:f} and \eqref{Rs}, we have to show that
	\begin{equation}\label{want}
		\lim_{\tau\to i\infty} \frac{(q)_\infty\left(R^*(z;\tau)-R(\zeta;q)\right)}{2i\sin(\pi z)} = 0.
	\end{equation}
	By \eqref{eq:widehatR}, we have
	\begin{equation*}
		R^*(z;\tau) - R(\zeta;q) = \frac1{2} q^{-\frac18} \left(\zeta^\frac12-\zeta^{-\frac12}\right) \sum_\pm \pm\zeta^{\mp1} S(3z\pm\tau;3\tau).
	\end{equation*}
	Next, note that
	\begin{equation*}
		S(3z\pm\tau;3\tau) = \sum_{n\in \Z+\frac12} \left(\sgn(n)-E\left(\left(n\pm\frac13+\frac {y}{v}\right)\sqrt{6v}\right)\right) (-1)^{n-\frac12} q^{-\frac{3n^2}2} e^{-2\pi in(3z\pm\tau)}.
	\end{equation*}
	Using Lemma~1.7 of \cite{Zw}, as $\tau\to i\infty$ (so $v\to\infty$),
	\begin{equation*}
		\sgn(n)-E\left(\left(n\pm\frac13+\frac y{v}\right)\sqrt{6v}\right) \ll e^{-\pi\left(n\pm\frac13+\frac{y}{v}\right)^26v}.
	\end{equation*}
	Thus $q^{-\frac{1}{8}}S(3z\pm\tau;3\tau)\to 0$ as $\tau\to i\infty$, from which we obtain \eqref{want}. The claim for $\widehat f_k$ follows from the claim for $f_k^*$ and Lemma~\ref{lem:f^}.
\end{proof}

\subsection{Differential equations}
Next, we look at the action of~$D$ on the~$f_k$ as well as the action of the raising and the lowering operator on its completions.
\begin{proof}[Proof of Theorem~\ref{thm:invariant-space}]
	\noindent(1) It is well-known, and follows by $\eqref{eq:Ramanujandiff}$, that $D(G_\ell)\in\mathcal F$. Thus it remains to show the formula for $D(f_k)$ for $k\in2\N$ which directly implies that $D(f_k)\in\mathcal F$. Using Theorem~\ref{thm:rank-crankPDE}, Lemma~\ref{lem:cranck}, and \eqref{eq:f}, we obtain
	\begin{align}
	&\frac{2}{(2\pi iz)^3} \exp\!\left(6\sum_{k\ge1}G_k(\tau)\frac{(2\pi iz)^k}{k!}\right)		\nonumber\\
	&=  \left(6q\frac\partial{\partial q}+\left(\zeta\frac\partial{\partial \zeta}\right)^2+6G_2(\tau)\right) \frac{1}{2\pi iz}\exp\!\left(2\sum_{k\ge1}f_k(\tau)\frac{(2\pi iz)^k}{k!}\right)\label{eq:C-R-diffeq}\\
		&= \left(\!\left(12\sum_{k\ge1}D(f_k(\tau))\frac{(2\pi iz)^k}{k!} + 6G_2(\tau)\right) \frac{1}{2\pi iz} + \frac{2}{(2\pi i z)^3} +  2\sum_{k\ge1}f_k(\tau)\frac{(k-2)(2\pi iz)^{k-3}}{(k-1)!} \right. \nonumber\\
		&\hspace{.2cm} \left.+2\left(-\frac{1}{(2\pi i z)^2} +  2\sum_{k\ge1}f_k(\tau)\frac{(2\pi iz)^{k-2}}{(k-1)!}\right)\sum_{\ell\ge1}f_\ell(\tau)\frac{(2\pi iz)^{\ell-1}}{(\ell-1)!}\right)\exp\!\left(2\sum_{k\ge1}f_k(\tau)\frac{(2\pi iz)^k}{k!}\right).\nonumber
	\end{align}
	Multiplying by $\frac{(2\pi iz)^3}{2} \exp(-2\sum_{k\ge1}f_k(\tau)\frac{(2\pi iz)^k}{k!})$
and collecting all terms with $D(f_k)$ on one side and the rest on the other, we have
	\begin{multline}
		6\sum_{k\ge1}D(f_k(\tau))\frac{(2\pi iz)^{k+2}}{k!} =- 3G_2(\tau)(2\pi i z)^2
		 +\exp\!\left(2\sum_{k\ge1}(3G_k(\tau)-f_k(\tau))\frac{(2\pi iz)^k}{k!}\right)-1\\
		-\sum_{k\ge1}f_k(\tau)(k-3)\frac{(2\pi iz)^k}{(k-1)!}-2\left(\sum_{k\ge1}f_k(\tau)\frac{(2\pi iz)^{k}}{(k-1)!}\right)^{2}.\label{D}
	\end{multline}
	Using Lemma~\ref{lem:cycle-index} with $w=2\pi i z$ and ${x_k}=\frac{2}{k!}(3G_k-f_k)$, we obtain
	\begin{equation*}
		\exp\!\left(2\sum_{k\ge1}(3G_k(\tau)-f_k(\tau))\frac{(2\pi iz)^k}{k!}\right) = \sum_{k\ge 0} \mathrm{Tr}_k(\phi,3G-f;\t) (2\pi i z)^k.
	\end{equation*}
	Plugging this in \eqref{D} and extracting the coefficient of ${(2\pi i z)^{k+2}}$ on both sides, we get, for $k\geq 2$,
	\begin{align*}
		 \frac{6}{k!}D(f_k(\tau)) &= \mathrm{Tr}_{k+2}(\phi,3G-f;\tau) -\frac{k-1}{(k+1)!} f_{k+2}(\tau) - 2\sum_{\substack{a,b\ge0\\a+b=k}}\frac{1}{a!b!}f_{a+1}(\tau)f_{b+1}(\tau).
	\end{align*}
	Multiplying with $\frac{k!}{6}$, yields the claim.

	\noindent(2) By \eqref{eq:widehatR} and Lemma~\ref{lem:H-on-non-hol-R}, we have
	\begin{equation*}
		H\!\left(\frac{1}{\eta(\tau)}\frac{\zeta^{\frac{1}{2}}(q)_\infty R^\circ(z,\overline z; \tau, \overline\tau)}{1-\zeta}\right) = H\!\left(\frac{1}{\eta(\tau)}\frac{\zeta^{\frac{1}{2}}(q)_\infty R(\z;q)}{1-\zeta}\right).
	\end{equation*}
	Using Lemma~\ref{lem:AnyF} and then Theorem~\ref{thm:rank-crankPDE}, we obtain
	\begin{equation*}
		(H+6G_2(\tau))\left(\frac{\zeta^{\frac{1}{2}}(q)_\infty R^\circ(z,\overline z; \tau, \overline\tau)}{1-\zeta}\right) = 2 \left(\frac{\zeta^{\frac{1}{2}}(q)_\infty C(\zeta;q)}{1-\zeta}\right)^{\! 3}.
	\end{equation*}
	Employing \eqref{eq:defRhat} for the left-hand side, we obtain
	\begin{equation*}
		2 \left(\frac{(q)_\infty C(\zeta;q)}{2i\sin(\pi z)}\right)^{\! 3} = \left(H+6G_2(\tau)\right)\left(\frac1{2\pi iz}\exp\!\left(2\sum_{k,\ell\ge0}f^*_{k,\ell}(\tau, \overline\tau) \frac{(2\pi iz)^k}{k!}\frac{(2\pi i\overline z)^\ell}{\ell!}\right)\right).
	\end{equation*}
	Hence, using Lemma~\ref{lem:cranck} and the definition of $H$, we obtain
	\begin{multline*}
		\frac{2}{(2\pi iz)^3} \exp\!\left(6\sum_{k\ge1}G_k(\tau)\frac{(2\pi iz)^k}{k!}\right) \\
		=\left(6q\frac\partial{\partial q}+\left(\zeta\frac\partial{\partial \zeta}\right)^2+6G_2(\tau)\right)\left(\frac1{2\pi iz}\exp\!\left(2\sum_{k,\ell\ge0}f^*_{k,\ell}(\tau, \overline\tau) \frac{(2\pi iz)^k}{k!}\frac{(2\pi i\overline z)^\ell}{\ell!}\right)\right).
	\end{multline*}
	Taking the constant terms from both sides with respect to $\overline{z}$ gives
	\begin{multline*}
		\frac{2}{(2\pi iz)^3} \exp\!\left(\!6\sum_{k\ge1}G_k(\tau)\frac{(2\pi iz)^k}{k!}\!\right) \\
		=\left(\!6q\frac\partial{\partial q}+\left(\zeta\frac\partial{\partial \zeta}\right)^2+6G_2(\tau)\!\right)\!\left(\frac1{2\pi iz}\exp\!\left(\!2\sum_{k\ge0}f^*_{k}(\tau) \frac{(2\pi iz)^k}{k!}\!\right)\right)\!.
	\end{multline*}
		Now the shape of the above equation is exactly the same as in~\eqref{eq:C-R-diffeq} except that $f_k$ is replaced by $f^*_k$ on the right-hand side. Hence, the same calculation as in part~(1) gives that
		\begin{equation*}
		 \frac{6}{k!}D\left(f^*_k(\tau)\right)
		 =		 \mathrm{Tr}_{k+2}\left(\phi,3G-f^*;\tau\right)-\frac{k-1}{(k+1)!}f^*_{k+2}(\tau)  -2\sum_{\substack{a,b\ge0\\a+b=k}} \frac{1}{a!b!} f^*_{a+1}(\tau) f^*_{b+1}(\tau).
		\end{equation*}
Noting that $3G-f^*=3\widehat G-\widehat f$, by Lemma~\ref{lem:f^}, we have
		\begin{equation*}
		 \frac{6}{k!}D\left(f^*_k(\tau)\right)
		 =		 \mathrm{Tr}_{k+2}\left(\phi,3\widehat G-\widehat f;\tau\right)-\frac{k-1}{(k+1)!}f^*_{k+2}(\tau)  -2\sum_{\substack{a,b\ge0\\a+b=k}} \frac{1}{a!b!} f^*_{a+1}(\tau) f^*_{b+1}(\tau).
		\end{equation*}
		From this, it is not hard to conclude the claim.

\noindent(3) Since $\widehat{\mathcal{F}}$ is generated by $\widehat{f}_k$ for $k\in\N$, $\widehat G_2, G_4$, and $G_6$, it is enough to show that $L(\widehat{f}_k), L(\widehat G_2)$, $L(G_4),L(G_6)\in \widehat{\mathcal{F}}\oplus \sqrt{v} |\eta|^2\widehat{\mathcal{F}}$ and prove the formula for $L(\widehat f_k)$.
First, we have $L(G_4)=L(G_6) = 0\in \widehat{\mathcal{F}}\oplus \sqrt{v} |\eta|^2\widehat{\mathcal{F}}$ since $G_4$ and $G_6$ are holomorphic.
Next, by \eqref{eq:G2*}, we have $L(\widehat{G}_2)=-\frac1{8\pi}\in \widehat{\mathcal{F}}\oplus \sqrt{v} |\eta|^2\widehat{\mathcal{F}}$. Finally, we look at $L(\widehat{f}_k)$. We compute, using \eqref{eq:F-hat},
	\begin{equation}\label{eq:L-on-F-hat}
		L\!\left(\widehat{\mathbb{F}}(z;\tau)\right)= 2\widehat{\mathbb{F}}(z;\tau) \sum_{k\ge0} L\!\left(\widehat f_k(\tau)\right) \frac{(2\pi iz)^k}{k!}.
	\end{equation}
	Using \eqref{eq:F-hat} again, the left-hand side can be written as
	\begin{equation*}
		L\!\left(\frac{(q)_\infty R^*(z;\tau) e^{-\frac{3\pi z^2}{2v}}}{2i\sin(\pi z)}\right) =\frac{(q)_\infty}{2i\sin(\pi z)} \lrb{L\!\left(R^*(z;\tau)\right)+R^*(z;\tau) \frac{3\pi z^2}{2}}e^{-\frac{3\pi z^2}{2v}}.
	\end{equation*}
	Using the above for the left-hand side and \eqref{eq:F-hat} for the right-hand side of \eqref{eq:L-on-F-hat} gives
	\begin{equation}\label{eq:L-on-R*}
		L\!\left(R^*(z;\tau)\right) = R^*(z;\tau)\sum_{k\ge0} \lrb{2L\!\left(\widehat f_k(\tau)\right) + \frac{3}{4\pi}\delta_{k=2} } \frac{(2\pi iz)^k}{k!}.
	\end{equation}

	We next compute, using \eqref{eq:widehatR},
	\begin{equation}\label{eq:L-on-R*_+-}
		L\!\left(R^*(z;\tau)\right) = \frac{1}{2} q^{-\frac{1}{8}} \left(\zeta^{\frac{1}{2}} - \zeta^{-\frac{1}{2}}\right) \sum_{\pm} \pm \zeta^{\mp 1} L\!\left(S(3z\pm\tau;3\tau)\right).
	\end{equation}

	We first consider the plus sign.
	Applying the lowering operator to \eqref{E:S} and using that by \cite[p. 11]{B2018}, we have
	\begin{equation*}
		L\!\left(E\left(\left(n+\frac yv\right)\sqrt{6v}\right)\right) = \sqrt{6} v^{\frac{3}{2}} \left(n-\frac yv\right) e^{-6\pi v\left(n+\frac yv\right)^2}
	\end{equation*}
	yields
	\begin{equation*}
	L\!\left(S(3z+\tau;3\tau)\right) = -\sqrt6 v^\frac32 q^{\frac16}\zeta
	e^{- \frac{6\pi y^2}{v}} \sum_{n\in\Z-\frac16} \left(n-\frac y{v}\right) (-1)^{n-\frac56} \overline q^{\frac{3n^2}2} \overline\z^{3n}.
	\end{equation*}

	Next, we turn to the minus sign. Using that $S$ is an even function and changing $z\mapsto -z$ in the above, yields
	\begin{align*}
		L\!\left(S(3z-\tau;3\tau)\right) = L\!\left(S(-3z+\tau;3\tau)\right) &= -\sqrt6 v^\frac32 q^{\frac16}\zeta^{-1} 
		e^{- \frac{6\pi y^2}{v}}\!\!\!\sum_{n\in\Z-\frac16}\!\! \left(n+\frac y{v}\right) (-1)^{n-\frac56} \overline q^{\frac{3n^2}2}\overline\z^{-3n}.
	\end{align*}
	Plugging the above two equations back into \eqref{eq:L-on-R*_+-} gives 
	\begin{align*}
		L\!\left(R^*(z;\tau)\right) &= -\sqrt{\frac32} v^\frac32
		e^{- \frac{6\pi y^2}{v}} q^{\frac{1}{24}} \left(\zeta^{\frac{1}{2}} - \zeta^{-\frac{1}{2}}\right) \sum_{\pm} \pm 
		\!\sum_{n\in\Z-\frac16} \left(n\mp\frac y{v}\right) (-1)^{n-\frac56} \overline q^{\frac{3n^2}2} \overline\z^{\pm 3n}\\
		&=-i\sqrt{6} v^\frac32
		e^{ \frac{3\pi (z-\overline z)^2}{2v}} q^{\frac{1}{24}} \sin(\pi z) \sum_\pm \pm \sum_{n\in \Z-\frac{1}{6}} \lrb{n\mp\frac{z-\overline z}{2iv}} (-1)^{n-\frac56} \overline q^{\frac{3n^2}2} \overline \zeta^{\pm3n},
	\end{align*}
	using that $y=\frac{z-\overline z}{2i}$.
	Using \eqref{eq:L-on-R*} and \eqref{eq:F-hat}, we obtain 
	\begin{multline*}
		\sum_{k\ge0}  \lrb{2L\!\left(\widehat f_k(\tau)\right) + \frac{3}{4\pi}\delta_{k=2} }\! \frac{(2\pi iz)^k}{k!} = \frac{L\!\left(R^*(z;\tau)\right)}{ R^*(z;\tau)} \\
		 = - \frac{i\sqrt{6} v^\frac32
		 	e^{ \frac{3\pi (z-\overline z)^2}{2v}} q^{\frac{1}{24}} \sin(\pi z)}{R^*(z;\t)} \sum_\pm \pm
		 \sum_{n\in \Z-\frac{1}{6}} \lrb{n\mp\frac{z-\overline z}{2iv}} (-1)^{n-\frac56} \overline q^{\frac{3n^2}2} \overline \zeta^{\pm3n}.
	\end{multline*}
	Collecting the constant terms in the Taylor expansions with respect to $\overline{z}$ on both sides  yields
	\begin{align*}
		&\sum_{k\ge0}  \!\lrb{\!2L\Bigl(\widehat f_k(\tau)\Bigr) \!+\! \frac{3}{4\pi}\delta_{k=2} \!}\! \frac{(2\pi iz)^k}{k!} 
		 \!=\! - \frac{i\sqrt{6} v^\frac32 e^{ \frac{3\pi z^2}{2v}} q^{\frac{1}{24}} \sin(\pi z)}{R^*(z;\t)} \sum_\pm \pm
		 \!\!\!\sum_{n\in \Z-\frac{1}{6}} \!\!\!\lrb{n \!\mp\! \frac{z}{2iv}}\! (-1)^{n-\frac56} \overline q^{\frac{3n^2}2}\\
		 &\hspace{2cm}= \frac{\sqrt{6v} z e^{\frac{3\pi z^2}{2v}} q^\frac1{24}\sin(\pi z)}{R^*(z;\tau)} \!\sum_{n\in\Z-\frac16}\! (-1)^{n-\frac56} \overline q^{\frac{3n^2}2}
		 = -iz\sqrt\frac{3v}2 \frac{\eta(\tau)}{\widehat{\mathbb F}(z;\tau)}
		 \!\sum_{n\in \Z-\frac{1}{6}}\!  (-1)^{n-\frac56} \overline q^{\frac{3n^2}2},
	\end{align*}
	using \eqref{eq:F-hat}. The sum on the right-hand side equals $-\eta(-\overline\t)$, so the above becomes
	\begin{equation*}
		iz \sqrt{\frac{3 v}{2}} \frac{|\eta(\t)|^2}{\widehat{\mathbb{F}}(z;\tau)} =-\pi \sqrt{6v} |\eta(\tau)|^2 z^2 \exp\!\left(-2\sum_{k\ge1}\widehat f_k(\tau)\frac{(2\pi iz)^k}{k!}\right),
	\end{equation*}
	by \eqref{eq:F-hat} again. Using Lemma~\ref{lem:cycle-index} with $w=2\pi i z$ and $x_k=-\frac{2}{k!}\widehat{f}_k(\tau)$, we conclude
	\begin{equation*}
		\sum_{k\ge0}  \!\lrb{\!2L\Bigl(\widehat f_k(\tau)\Bigr) + \frac{3}{4\pi}\delta_{k=2} \!}\! \frac{(2\pi iz)^k}{k!} = \frac{\sqrt{3v}}{2\sqrt2\pi} |\eta(\tau)|^2 (2\pi iz)^2 \sum_{n\ge 0} \mathrm{Tr}_n\bigl(\psi,\!\widehat{f};\!\tau\bigr)(2\pi i z)^n\!.
	\end{equation*}
	Comparing the coefficient of $(2\pi iz)^k$ gives the claim for $k\ge 2$.

\noindent(4) The Serre derivative~$\vartheta_k$ defined in \eqref{eq:Serrederivative} increases the weight of a modular object by $2$ (note that the proof in \cite[p.~48]{Zag123} goes through if $f$ is non-holomorphic). We have 
\begin{align*}
	D+\frac{2k}{3} f_2^*-\vartheta_k = 2k\left(\frac{1}3 f_2^*-G_2\right).
\end{align*}
By Lemma~\ref{lem:completion} and \eqref{eq:Gk} this is a non-holomorphic modular form of weight $2$. Hence $D+\frac{2k}{3} f_2^*$  maps a modular object of weight $k$ to a modular object of weight $k+2$.
\end{proof}
We are now ready to prove Theorem~\ref{thm:main}.
\begin{proof}[Proof of Theorem~\ref{thm:main}] Using \eqref{eq:f} and Lemma~\ref{lem:cycle-index} with $w=2\pi i z$ and $x_k = \frac{2}{k!}f_k$, we find
\begin{align}\label{eq:r-trace}
	R(\zeta;q) = \frac{\sin\!\left(\pi z\right)}{\pi z(q)_\infty} \sum_{k\ge0} \mathrm{Tr}_k(\phi,f;\t) (2\pi i z)^{k}.
\end{align}
From \eqref{eq:rank} and \eqref{eq:rankmoments} we deduce that
\begin{equation}\label{rmom}
R(\zeta;q) = \sum_{k\ge0 } R_{k}(q) \frac{(2\pi iz)^{k}}{k!}.
\end{equation}
Substituting \( z \mapsto \frac{z}{2\pi i}\) and applying \eqref{rmom} and $\sin(-\frac{iz}{2})=-i\sinh(\frac{z}{2})$ in \eqref{eq:r-trace}, we obtain
\begin{align*}
	\sum_{k\geq 0} R_k(q) \frac{z^k}{k!} 
	= \frac{2\sinh\!\left(\frac{z}{2}\right)}{z(q)_\infty} \sum_{k\ge0} \mathrm{Tr}_k(\phi,f;\t) z^{k}.
\end{align*}
By Lemma~\ref{lem:limit} the first property is satisfied, by Lemma~\ref{lem:completion} the second, and by Theorem~\ref{thm:invariant-space}~(1) the third.
\end{proof}

\section{Proof of Theorems~\ref{T:Recursion} and \ref{thm:integrality}}\label{sec:recursion}
The following lemma rewrites the rank moments~$R_k$ in terms of the~$g_\ell$, defined by \eqref{eq:gl}.
\begin{lem}\label{lem:R} For $k\geq 1$, we have
\begin{equation*}
R_k(q) = \frac{2^{2-k}}{(q)_\infty }\sum_{\substack{\ell=2 \\ \ell\equiv k\pmod{2}}}^k \binom{k}{\ell-1}\left(g_{\ell}(\tau) + \left(2^{\ell-1}-1\right) \frac{B_{\ell}}{2\ell}\right).
\end{equation*}
\end{lem}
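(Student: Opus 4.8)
The plan is to establish a single closed generating identity packaging all $k$ at once, and then to read off Lemma~\ref{lem:R} by extracting Taylor coefficients. Write $w:=2\pi iz$, so that $\z=e^w$ and, by \eqref{rmom}, $(q)_\infty R(\z;q)=\sum_{k\ge0}(q)_\infty R_k(q)\tfrac{w^k}{k!}$. I would prove
\begin{equation*}
(q)_\infty R(\z;q) = 4\sinh\!\left(\tfrac w2\right)\!\left[\sum_{2n-1\ge3m\ge3}\sinh\!\left(\tfrac w2(2n-3m)\right)q^{nm} - \sum_{n-1\ge6m\ge6}\sinh\!\left(\tfrac w2(n-6m)\right)q^{nm}\right].
\end{equation*}
Granting this, the lemma follows by comparing coefficients of $\tfrac{w^k}{k!}$. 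Expanding $4\sinh(\tfrac w2)\sinh(\tfrac w2X)=(\z^{1/2}-\z^{-1/2})(\z^{X/2}-\z^{-X/2})$ and using that $e^{\frac w2Y}$ contributes $(Y/2)^k$, the coefficient of $\tfrac{w^k}{k!}$ equals $2^{1-k}\bigl((1+X)^k-(1-X)^k\bigr)$ for even $k$ and vanishes for odd $k$ (which recovers $R_k=0$ for odd $k$, matching the parity constraint $\ell\equiv k\pmod2$). Via $(1+X)^k-(1-X)^k=2\sum_{s\,\mathrm{odd}}\binom ks X^s$ and the constant term of \eqref{eq:gl}, namely $\sum_{2n-1\ge3m\ge3}(2n-3m)^{\ell-1}q^{nm}-\sum_{n-1\ge6m\ge6}(n-6m)^{\ell-1}q^{nm}=g_\ell+(2^{\ell-1}-1)\tfrac{B_\ell}{2\ell}$, this becomes exactly $\tfrac{2^{2-k}}{(q)_\infty}\sum_\ell\binom{k}{\ell-1}\bigl(g_\ell+(2^{\ell-1}-1)\tfrac{B_\ell}{2\ell}\bigr)$ with $\ell=s+1$ even.

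To prove the generating identity, I would divide by $2i\sin(\pi z)=\z^{1/2}-\z^{-1/2}=2\sinh(\tfrac w2)$ and invoke the computation from the proof of Lemma~\ref{lem:limit}, namely $\tfrac{(q)_\infty R(\z;q)}{2i\sin(\pi z)}=-\z^{1/2}\sum_{n\in\Z}\tfrac{(-1)^nq^{n(3n+1)/2}}{1-\z q^n}$, together with $2\sinh(\tfrac w2X)=\z^{X/2}-\z^{-X/2}$. The identity then reduces to
\begin{equation*}
-\z^{\frac12}\sum_{n\in\Z}\frac{(-1)^nq^{\frac{n(3n+1)}2}}{1-\z q^n} = \sum_{2n-1\ge3m\ge3}\bigl(\z^{n-\frac{3m}2}-\z^{-n+\frac{3m}2}\bigr)q^{nm} - \sum_{n-1\ge6m\ge6}\bigl(\z^{\frac n2-3m}-\z^{-\frac n2+3m}\bigr)q^{nm}.
\end{equation*}
I would prove this by expanding the right-hand divisor sums: for fixed $m$, summing the geometric series in $n$ over the admissible range $2n-1\ge3m$ (resp.\ $n-1\ge6m$) produces Appell--Lerch terms of the shape $\z^{\pm1/2}q^{m(3m\pm1)/2}(1-\z^{\pm1}q^m)^{-1}$, so that the divisor variable $m$ becomes the Lerch summation index and the exponents $\tfrac{m(3m\pm1)}2$ reproduce the two pentagonal progressions of the Lerch sum.

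The main obstacle is the bookkeeping of this reindexing. One must match the two constraint regions $\{2n-1\ge3m\ge3\}$ and $\{n-1\ge6m\ge6\}$, together with the two parities of $m$ and the two signs $\z^{\pm X/2}$, against the ranges $n>0$ and $n<0$ of the Lerch sum and the progressions $q^{n(3n\pm1)/2}$; the alternating sign $(-1)^n$ and the relative minus sign between the two divisor sums emerge only after these contributions are combined. A delicate point is that for one parity the individual summands carry half-integer powers of $\z$, which cancel only across the two divisor sums, so the identity holds at the level of the full Laurent series in $\z$ rather than term by term (this cancellation is already visible for $k=2$, where the even-$m$ contribution of the first sum to a given $q^{nm}$ is annihilated by a contribution of the second sum). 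As a safeguard against sign errors I would cross-check the identity for small powers of $q$, for instance that the coefficient of $q^6$ equals $18$ on both sides when $k=2$.
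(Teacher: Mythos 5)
Your strategy is sound, and once unwound it is essentially the paper's proof repackaged as a generating-function identity. The paper starts from Atkin--Swinnerton-Dyer's formula \eqref{eq:formula_for_Rk} (which encodes the same Lerch-sum expansion as \eqref{eq:RankLerch}), performs an even/odd split in $n$, swaps $n$ and $m$ in half of the resulting sums, and shifts variables to land on the constraint regions $2n-1\ge 3m$ and $m\ge 6n+1$, finishing with $(x+1)^k-(x-1)^k=2\sum_{2\mid\ell}\binom{k}{\ell-1}x^{\ell-1}$; you carry out the same reindexing at the level of the full Laurent series in $\zeta$ and the same binomial identity reappears in your coefficient extraction, which is correct. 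The cancellation you flag is indeed the crux and works exactly as you suspect: for even index $M$ the first divisor sum produces $\zeta q^{M(3M+2)/2}(1-\zeta q^M)^{-1}$ rather than a Lerch term, and after writing $(1-\zeta^{1/2}q^m)^{-1}=(1+\zeta^{1/2}q^m)(1-\zeta q^{2m})^{-1}$ the second divisor sum splits into a piece supplying the even-index Lerch terms and a piece that annihilates the anomalous contribution of the first sum; I verified your reduced identity through the coefficient of $q^7$, and your sanity value $18$ for the $q^6$-coefficient of $(q)_\infty R_2$ is right. What you defer as ``bookkeeping'' is precisely the content of the paper's chain of substitutions, so nothing conceptually new is required, but it does all still need to be written out. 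The one structural advantage of your packaging is that it handles all $k$ at once and makes the vanishing for odd $k$ automatic.

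There is one genuine, though easily repaired, error: both of your displayed identities are off by a constant. Since $R_0(q)=(q)_\infty^{-1}$, the left-hand side of your generating identity has constant term $1$ as a series in $w$, whereas the right-hand side vanishes at $w=0$; correspondingly, after dividing by $2\sinh(\tfrac w2)=\zeta^{1/2}-\zeta^{-1/2}$, the $n=0$ term $-\zeta^{1/2}(1-\zeta)^{-1}=\zeta^{1/2}(\zeta-1)^{-1}$ of the Lerch sum has no counterpart among your divisor sums. You must either state the generating identity as $(q)_\infty R(\zeta;q)=1+4\sinh(\tfrac w2)[\cdots]$ and exclude $n=0$ from the Lerch sum in the reduced identity, or equivalently carry $\zeta^{1/2}(\zeta-1)^{-1}$ over to the right-hand side. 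This does not affect the lemma itself, which only concerns $k\ge 1$, but as written the two identities you propose to prove are false.
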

\begin{proof}
Both sides of the lemma are zero for $k$ odd. Namely, since $N(m,n)=N(-m,n)$, we have that $R_k$ is zero for $k$ odd, and for $\ell$ odd, we have
\begin{align}\label{eq:B_l-g_l-odd}
	\left(2^{\ell-1}-1\right)\frac{ B_{\ell}}{2\ell} + g_{\ell}(\tau)=0. 
\end{align}
So, we may assume that $k$ is even. Recall that by \cite[equation (2.12)]{AS} we have, for $k\ge2$ even,
\begin{equation}\label{eq:formula_for_Rk}
R_k(q) = \frac{2}{(q)_\infty} \sum_{n\ge1} (-1)^{n+1} q^\frac{n(3n-1)}2(1-q^n)\sum_{m\geq 0} m^k q^{nm}.
\end{equation}
Distinguishing between $n$ even and $n$ odd yields
\begin{align*}
2^{k-1}(q)_\infty R_k(q)&\!=\! \!\!\! \sum_{n\geq 1, m\geq 0} \!\!\! (2m)^k \!\left(\!- q^{n(6n+2m-1)} \!+\!  q^{(2n-1)(3n+m-2)} \!+\! q^{n(6n+2m+1)} \!-\! q^{(2n-1)(3n+m-1)}\!\right)\\
&\!=\! \hspace{-.15cm}\sum_{m\geq 3n\geq 3}\hspace{-.15cm} (2m-6n)^k \!\left(\!- q^{n(2m-1)} +  q^{(2n-1)(m-2)} + q^{n(2m+1)} -  q^{(2n-1)(m-1)}\!\right)\!,
\end{align*}
making the change of variables $m\mapsto m-3n$.
Interchanging the role of $n$ and $m$ in the second and fourth sum the above becomes
\begin{multline*}
	-\sum_{m\geq 3n\geq 3} (2m-6n)^k q^{n(2m-1)} + \sum_{n\geq 3m\geq 3} (2n-6m)^k  q^{(n-2)(2m-1)} \\
			+ \sum_{m\geq 3n\geq 3} (2m-6n)^k q^{n(2m+1)} - \sum_{n\geq 3m\geq 3} (2n-6m)^k  q^{(n-1)(2m-1)}.
\end{multline*}
Making the change of variables $n\mapsto n+2$ in the second sum, the change of variables $m\mapsto m-1$ in the third sum, and the change of variables $n\mapsto n+1$ in the fourth sum, the above becomes
\begin{align}
&- \hspace{-0.3cm}\sum_{2m-1\geq 6n-1\geq 5}\hspace{-0.3cm} (2m-1-6n+1)^k q^{n(2m-1)}
	 + \hspace{-0.3cm}\sum_{2n+1\geq 3(2m-1)\geq 3}\hspace{-0.3cm} (2n+1-3(2m-1))^k  q^{n(2m-1)} \nonumber\\
	 &+\hspace{-0.3cm} \sum_{2m-1\geq 6n+1\geq 7}\hspace{-0.3cm} (2m-1-6n-1)^k q^{n(2m-1)}	- \hspace{-0.3cm} \sum_{2n-1\geq 3(2m-1)\geq 3}\hspace{-0.3cm} (2n-1-3(2m-1))^k  q^{n(2m-1)} \nonumber\\
			&\qquad=\sum_{\substack{m\geq 6n+1\geq 7\\\text{$m$ odd}}}\hspace{-0.1cm} \left((m-6n-1)^k - (m-6n+1)^k\right) q^{nm}\nonumber\\[-32pt]\nonumber\\
			&\hspace{5cm} + \sum_{\substack{2n-1\geq 3m\geq 3\\ \text{$m$ odd}}}\hspace{-0.1cm} \left((2n-3m+1)^k - (2n-3m-1)^k\right)  q^{nm}\nonumber\\
	&\qquad= \sum_{\substack{m\geq 6n+1\geq 7}}\hspace{-0.1cm} \left((m-6n-1)^k - (m-6n+1)^k\right)  q^{nm}\nonumber\\
			&\hspace{5cm}+ \sum_{\substack{2n-1\geq 3m\geq 3}}\hspace{-0.1cm} \left((2n-3m+1)^k - (2n-3m-1)^k\right)  q^{nm}.\label{eq:all-sums}
	\end{align}
We require the identity
\[
(x+1)^k-(x-1)^k = 2\sum_{\substack{1\leq \ell \leq k-1 \\ 2\nmid \ell}} \binom{k}{\ell} x^\ell = 2\sum_{\substack{2\leq \ell \leq k \\ 2\mid \ell}} \binom{k}{\ell-1} x^{\ell-1} .
\]
Using this and the fact that $k$ is even, we obtain that \eqref{eq:all-sums} equals
\begin{align*}
2\sum_{\substack{\ell=2 \\ 2\mid \ell}}^k \binom{k}{\ell-1} \lrb{-\sum_{\substack{m\geq 6n+1\geq 7}}(m-6n)^{\ell-1}  q^{nm}  + 	\sum_{\substack{2n-1\geq 3m\geq 3}} (2n-3m)^{\ell-1}  q^{nm}}.
\end{align*}
Now the lemma follows from the definition of $g_\ell$.
\end{proof}

The generating function of the $g_\ell$ is closely related to the rank generating function.
\begin{lem}\label{lem:mathcalF}
We have 
\[
	\frac{ \pi z(q)_\infty}{\sin\left(\pi z\right)}R(\zeta;q) \\
= 1 + 4\sum_{k\ge1}g_{k}(\tau) \frac{(\pi iz)^{k}}{(k-1)!}.
\]
\end{lem}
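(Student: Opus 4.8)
The plan is to read off the lemma directly from the rank-moment formula in Lemma~\ref{lem:R}, which already packages the relation between the $g_\ell$ and the Fourier coefficients of $R(\zeta;q)$. Set $w:=2\pi iz$, so that $\zeta=e^{w}$, $\sin(\pi z)=-i\sinh(\tfrac w2)$, and $\frac{\pi z}{\sin(\pi z)}=\frac{w}{2\sinh(w/2)}$. I start from the expansion
\begin{equation*}
R(\zeta;q)=\sum_{k\ge0}R_k(q)\frac{w^k}{k!},
\end{equation*}
which is \eqref{rmom} (and is immediate from the definition \eqref{eq:rankmoments} of the rank moments together with $\zeta^m=e^{mw}$). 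Substituting the formula for $R_k$ from Lemma~\ref{lem:R}, writing $\widetilde g_\ell:=g_\ell+(2^{\ell-1}-1)\frac{B_\ell}{2\ell}$ (so $\widetilde g_\ell=0$ for odd $\ell$ by \eqref{eq:B_l-g_l-odd}), and using $\tbinom{k}{\ell-1}/k!=1/((\ell-1)!(k-\ell+1)!)$, I interchange the sums over $k$ and $\ell$.

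The crux---and the step I expect to be most delicate---is the parity restriction $\ell\equiv k\pmod2$ present in Lemma~\ref{lem:R}. For fixed even $\ell$ the outer index $k$ runs only over \emph{even} integers $k\ge\ell$, hence, after the substitution $j:=k-\ell+1$, the index $j$ runs over \emph{odd} positive integers only. Consequently the inner series is
\begin{equation*}
\sum_{\substack{j\ge1\\ j\ \mathrm{odd}}}\frac{(w/2)^j}{j!}=\sinh\!\left(\tfrac w2\right),
\end{equation*}
rather than $e^{w/2}-1$, which is what one would erroneously obtain by ignoring the parity. Together with the term $R_0=1/(q)_\infty$ this gives
\begin{equation*}
R(\zeta;q)=\frac1{(q)_\infty}\left(1+4\sinh\!\left(\tfrac w2\right)\sum_{\ell\ge2}\frac{\widetilde g_\ell(\tau)\,(w/2)^{\ell-1}}{(\ell-1)!}\right).
\end{equation*}

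Multiplying by $\frac{\pi z(q)_\infty}{\sin(\pi z)}=\frac{w(q)_\infty}{2\sinh(w/2)}$ cancels both $(q)_\infty$ and $\sinh(\tfrac w2)$, leaving
\begin{equation*}
\frac{\pi z(q)_\infty}{\sin(\pi z)}R(\zeta;q)=\frac{w}{2\sinh(w/2)}+2w\sum_{\ell\ge2}\frac{\widetilde g_\ell\,(w/2)^{\ell-1}}{(\ell-1)!}.
\end{equation*}
Splitting $\widetilde g_\ell$ into its $g_\ell$-part and its Bernoulli part, the $g_\ell$-part contributes exactly $2w\sum_\ell\frac{g_\ell(w/2)^{\ell-1}}{(\ell-1)!}=4\sum_{\ell\ge1}g_\ell\frac{(\pi iz)^\ell}{(\ell-1)!}$, which is the asserted right-hand side.

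It therefore remains to prove the $q$-free generating-function identity
\begin{equation*}
\frac{w}{2\sinh(w/2)}+2w\sum_{\ell\ge1}\frac{(2^{\ell-1}-1)\frac{B_\ell}{2\ell}(w/2)^{\ell-1}}{(\ell-1)!}=1,
\end{equation*}
which accounts for the constant term $1$. Using $\ell(\ell-1)!=\ell!$ and $\sum_{\ell\ge0}\frac{B_\ell}{\ell!}t^\ell=\frac{t}{e^t-1}$ (applied once with $t=w$ and once with $t=\tfrac w2$), the Bernoulli sum simplifies to $1-\frac{we^{w/2}}{e^w-1}$, while $\frac{w}{2\sinh(w/2)}=\frac{we^{w/2}}{e^w-1}$; adding the two yields $1$. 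This completes the argument: the only real obstacle is the parity bookkeeping that replaces the exponential by $\sinh(\tfrac w2)$, after which the Bernoulli correction terms in Lemma~\ref{lem:R} conspire with the prefactor $\frac{\pi z}{\sin(\pi z)}$ to produce precisely the constant $1$.
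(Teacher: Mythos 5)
Your proof is correct, and it rests on the same two inputs as the paper's proof---namely \eqref{rmom} and Lemma~\ref{lem:R}---but the middle of the computation is organized differently. The paper expands the prefactor $\frac{\pi z}{\sin(\pi z)}$ into the Bernoulli-polynomial series \eqref{eq:sin-exp}, forms the Cauchy product with the rank-moment series, and collapses the resulting triple sum using the translation and derivative identities \eqref{eq:BernoulliTranslation} and \eqref{eq:BernoulliDerivative} together with the special values $B_m(\tfrac12)=0$ and $B_m(0)=-\tfrac12\delta_{m=1}$ for $m$ odd. You instead keep the prefactor in closed form as $\frac{w}{2\sinh(w/2)}$, resum the inner sum over $k$ first---your parity bookkeeping ($j=k-\ell+1$ running over odd integers, so the inner series is $\sinh(\tfrac w2)$ rather than $e^{w/2}-1$) is exactly the delicate point and you handle it correctly---and let the resulting $\sinh(\tfrac w2)$ cancel against the prefactor, after which the Bernoulli constants are disposed of with the ordinary generating function $\frac{t}{e^t-1}$ at $t=w$ and $t=\tfrac w2$; I verified that this last sum does reduce to $1-\frac{we^{w/2}}{e^w-1}$, which combines with $\frac{w}{2\sinh(w/2)}=\frac{we^{w/2}}{e^w-1}$ to give the constant term $1$. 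Your route avoids the Bernoulli-polynomial machinery entirely and makes the cancellation transparent; the paper's version is a single term-by-term coefficient comparison, which matches the conventions used elsewhere in the paper but hides the same cancellation inside the identity $\sum_k\binom nk\binom k\ell B_{n-k}(\tfrac12)2^{\ell-k}=-\binom n\ell B_{n-\ell}(0)$.
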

\begin{proof}
By setting $X=\frac{1}{2}$ and $t=-2\pi i z$ in \eqref{eq:BernoulliPolGS}
 we have
\begin{align}\label{eq:sin-exp}
\frac{\pi z}{\sin(\pi z)} &= \sum_{n\ge0} B_{n}\!\left(\frac{1}{2}\right) \frac{(2\pi i z)^{n}}{n!}.
\end{align}
Note that, by \eqref{eq:rankmoments}, $R_0(q)=\sum_{n\geq 0} \sum_{m\in \Z} N(m,n) q^{n} = \frac{1}{(q)_\infty}$ is the generating function of partitions.
Hence, using \eqref{rmom} and Lemma~\ref{lem:R}, we obtain
\begin{align}
&\frac{\pi z(q)_\infty}{\sin\left(\pi z\right)}	\left(R(\zeta;q)-\frac{1}{(q)_\infty}\right)
 \nonumber\\
&\hspace{1cm}=  \sum_{{n\ge0}} B_{n}\!\left(\frac{1}{2}\right) \frac{(2\pi i z)^{n}}{n!} \sum_{k\ge 1}
 2^{2-k} \sum_{\substack{\ell= 1\\ \ell\not\equiv k\Pmod{2}}}^k \binom{k}{\ell} \left( g_{\ell+1}(\tau) + \frac{\left(2^{\ell}-1\right) B_{\ell+1}}{2(\ell+1)} \right)
 \frac{(2\pi iz)^{k}}{k!} \nonumber\\
 &\hspace{1cm}=  \sum_{\substack{n\ge0\\ \ell\ge1}} \sum_{\substack{k\geq \ell\\ k\not\equiv \ell\Pmod{2}}} \binom{n}{k}  \binom{k}{\ell}  B_{n-k}\!\left(\frac{1}{2}\right)
 2^{2-k} \left( g_{\ell+1}(\tau) + \frac{\left(2^{\ell}-1\right) B_{\ell+1}}{2(\ell+1)}\right)
 \frac{(2\pi iz)^{n}}{n!},\nonumber
\end{align}
where we make the change of variables $n\mapsto n-k$. 
Using \eqref{eq:BernoulliTranslation}, \eqref{eq:BernoulliDerivative}, and the fact that $B_m(\frac{1}{2})=0$ and $B_m(0)=-\frac{\delta_{m=1}}{2}$ for $m$ odd, for $\ell\in\N$ odd and $n\in \N$ we find
\begin{align*}
\sum _{\substack{k=\ell\\ k\not\equiv\ell\Pmod{2}}}^n {n \choose k}\binom{k}{\ell}B_{n-k}\!\left(\frac{1}{2}\right)2^{\ell-k} = -\binom{n}{\ell} B_{n-\ell}(0) = \begin{cases} \frac{n}{2} & \text{if } n=\ell+1, \\ 0 & \text{otherwise}.\end{cases}
\end{align*}
Using the above and \eqref{eq:B_l-g_l-odd}, we find
\begin{align*}
&\frac{\pi z(q)_\infty}{\sin\left(\pi z\right)}\left(R(\zeta;q)-\frac{1}{(q)_\infty}\right) =  \sum_{n\ge1} \frac{n}{2^{n-2}} \left( \frac{\left(2^{n-1}-1\right) B_{n}}{2n} + g_{n}(\tau) \right) \frac{(2\pi iz)^{n}}{n!}.
\end{align*}
The result follows by adding $\frac{\pi z}{\sin(\pi z)}$ on both sides and using \eqref{eq:sin-exp} and the fact that $B_n(\frac12)=-(1-2^{1-n}) B_n$.
\end{proof}

Now, we are ready to prove Theorems~\ref{T:Recursion} and \ref{thm:integrality}.

\begin{proof}[Proof of Theorem~\ref{T:Recursion}]
(1) Using Lemma~\ref{lem:mathcalF} and then taking the derivative of \eqref{eq:f} with respect to $z$ yields
\begin{equation}
	4\pi i \sum_{k\geq 1} f_{k}(\tau)\frac{(2\pi iz)^{k-1}}{(k-1)!} \exp\!\left(\!2\sum_{k\geq 1} f_{k}(\tau)\frac{(2\pi iz)^{k}}{k!}\!\right)
	=2\pi i\sum_{k\ge0} \frac{kg_{k}(\tau)}{2^{k-2}} \frac{(2\pi iz)^{k-1}}{(k-1)!}. \label{eq:recf}
\end{equation}
Again using Lemma~\ref{lem:mathcalF} we obtain
\begin{equation*}
 2\sum_{k\geq 1} f_{k}(\tau) \frac{(2\pi iz)^{k-1}}{(k-1)!} + 2\sum_{n,\ell\geq 1} \binom{n+\ell-1}{n-1}f_n(\tau) \frac{\ell g_{\ell}(\tau)}{2^{\ell-2}} \frac{(2\pi iz)^{n+\ell-1}}{(n+\ell-1)!}=\sum_{k\ge0} \frac{kg_{k}(\tau)}{2^{k-2}} \frac{(2\pi iz)^{k-1}}{(k-1)!}.
\end{equation*}
By extracting the coefficients of $(2\pi i z)^{k-1}$, we obtain~(1).

\noindent(2) We send the exponential in \eqref{eq:recf} to the other side and apply Lemma~\ref{lem:cycle-index} with $w=2\pi i z$ and $x_k=-\frac{2f_k}{k!}$ to obtain
\begin{align*}
	2\sum_{k\geq 1} f_{k}(\tau)\frac{(2\pi iz)^{k-1}}{(k-1)!}
 =\sum_{n\ge0}  \frac{ng_{n}(\tau)}{2^{n-2}}
 \frac{(2\pi iz)^{n-1}}{(n-1)!}  \sum_{m\geq 0} \mathrm{Tr}_m(\psi,f;\t) {(2\pi i z)^m}.
\end{align*}
Comparing the coefficients of $(2\pi i z)^{k-1}$ gives (2).
\end{proof}

\begin{proof}[Proof of Theorem~\ref{thm:integrality}]
By \eqref{eq:f}, \eqref{rmom}, and Lemma~\ref{lem:ExpB}, we find that
\begin{align*}
	\exp\!\left(2\sum_{k\geq 1} \left(f_{k} (\tau)+\frac{B_k}{2k}\right) \frac{z^{k}}{k!}\right) &=1+(q)_\infty\sum_{k\ge1}R_{k}(q) \frac{z^{k}}{k!}.
\end{align*}
Taking the logarithm and expanding the right-hand side formally, we deduce that
\begin{equation*}
	2\sum_{k\geq 1} \left(f_{k} (\tau)+\frac{B_k}{2k}\right)\frac{z^{k}}{k!}
	= \sum_{n \geq 1}\frac{(-1)^{n+1}}{n}\sum_{k_1,\ldots,k_n\geq 1}z^{k_1+\ldots+k_n}\prod_{j=1}^n  (q)_\infty R_{k_j}(q) \frac{1}{k_j!} .
\end{equation*}
Let $\Omega$ be the linear map on $\mathbb{C}[\![z]\!]$ given by
$ \Omega(z^k) := k!z^k$ for $k\geq 0$. 
Applying~$\Omega$, we obtain
\begin{align}
\!\!2\sum_{k\geq 1}  \!\left(f_{k} (\tau)\!+\!\frac{B_k}{2k}\right)\! z^{k} 
&=  \sum_{n \geq 1} \frac{(-1)^{n+1}}{n} \mathcal{R}_n(z;q), \label{eq:fklog}
\end{align}
where we set
\[
\mathcal{R}_n(z;q):=\sum_{k_1,\ldots,k_n\geq 1}\!\!\binom{k_1+\ldots+k_n}{k_1,\ldots,k_n} z^{k_1+\ldots+k_n}\prod_{j=1}^n  (q)_\infty R_{k_j}(q).
\]

To show that the Fourier coefficients of $f_{k} (\tau)\!+\!\frac{B_k}{2k}$ are integral, it suffices to show that these on the right-hand side of \eqref{eq:fklog} are even integers.
By \eqref{eq:formula_for_Rk} the Fourier coefficients of $(q)_\infty R_k(q)$ are even. Hence, those of $\frac{1}{n}\mathcal{R}_n(z;q)$ are even. Let $k_1,\ldots,k_n\in\N$. There exists a unique partition $\lambda$ of length~$n$ associated to these~$k_j$ (obtained by ordering the~$k_j$ in non-increasing order). Conversely, for a partition $\lambda$ of length~$n$, there are $\binom{n}{r_1(\lambda),r_2(\lambda),\ldots}$ many $n$-tuples $(k_1,\ldots,k_n)$ with associated partition $\lambda$, where $r_m(\lambda)$ denotes the number of parts of size~$m$ in $\lambda$. Let $\mathcal{P}$ be the set of partitions and $|\lambda|=\sum_{j}\lambda_j$ the size of $\lambda \in \mathcal{P}$. We replace the sum over $k_1,\ldots,k_n$ by a sum over partitions. Then, we have $\binom{k_1+\ldots+k_n}{k_1,\ldots,k_n} = \binom{|\lambda|}{\lambda_1,\ldots,\lambda_n}$
and 
\[
	\mathcal{R}_n(z;q)=\sum_{\substack{\lambda \in \mathcal{P}\\\ell(\lambda)=n}}
	{\binom{|\lambda|}{\lambda_1,\ldots,\lambda_n}}
	\binom{n}{r_1(\lambda),r_2(\lambda),\ldots}z^{|\lambda|}\prod_{j=1}^n  (q)_\infty R_{\lambda_j}(q).
\]
The claim follows once we show that
 \begin{equation}\label{show}
 	n\,\Big|\, \binom{|\lambda|}{\lambda_1,\ldots,\lambda_n}\binom{n}{r_1(\lambda),r_2(\lambda),\ldots}.
 \end{equation}

Let $d$ be the greatest common divisor of $r_1(\lambda)$, $r_2(\lambda), \ldots$. Since $\sum_{m\geq 1} r_m(\lambda)=n$, we have that $d\mid n$ and we write $n=rd$.  Moreover, note that one can write the partition $(\lambda_1,\ldots,\lambda_n)$ as $(\ell_1,\ldots,\ell_1,\ell_2,\ldots,\ell_2,\ldots,\ell_r,\ldots,\ell_r)$, where each $\ell_j$ is repeated $d$ times. First, we show that 
\begin{equation}\label{eq:sdiv}
d\,\Big|\,\binom{|\lambda|}{\lambda_1,\ldots,\lambda_n}. 
\end{equation}
We factorize the multinomial coefficient as 
\begin{equation}\label{split}
\binom{|\lambda|}{\lambda_1,\ldots,\lambda_n} = \binom{|\lambda|}{\tfrac{|\lambda|}{d},\ldots, \tfrac{|\lambda|}{d}} \binom{\frac{|\lambda|}{d}}{\ell_1,\ldots,\ell_r}^{\!d},
\end{equation}
where $\tfrac{|\lambda|}{d}$ occurs $d$ times in the first multinomial coefficient on the right-hand side.
All of the multinomial coefficients are integers and by Lemma~\ref{lem:div} with $n=|\lambda|$ and $a_j=\frac{|\lambda|}{d}$ for $j\in \{1,\ldots,d\}$, we find that $d$ divides the first factor in \eqref{split} and
hence, 
\eqref{eq:sdiv} holds.

Again using Lemma~\ref{lem:div}, in this case with $a_m=r_m(\lambda)$ and $d=\gcd(a_1,\ldots,a_\ell)$, we have
$
r\,|\,\binom{n}{r_1(\lambda),r_2(\lambda),\ldots}.
$ Combining this with \eqref{eq:sdiv} gives \eqref{show}.
\end{proof}

Finally, we refine Lemma~\ref{lem:limit} by computing the first Fourier coefficients of $f_k$. In particular, the $f_k$ are naturally normalized in the sense that the second Fourier coefficient is one. 
\begin{prop}\label{prop:fknorm}
For $k\geq 2$ even we have
\[f_k(\tau) = - \frac{B_k}{2k}+q^2+\left(2^{k}-1\right)q^3+O\left(q^4\right).\]
\end{prop}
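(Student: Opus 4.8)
The plan is to read off the first few $q$-coefficients of $f_k$ directly from the defining relation \eqref{eq:f}, taking advantage of the fact that $(q)_\infty R_k(q)$ vanishes to order $q^2$. Writing \eqref{eq:f} as $\frac{\pi z(q)_\infty}{\sin(\pi z)}R(\zeta;q)=\exp\bigl(2\sum_{k\geq1} f_k(\tau)\frac{(2\pi iz)^k}{k!}\bigr)$ and taking logarithms gives
\[
2\sum_{k\geq 1} f_k(\tau)\frac{(2\pi iz)^k}{k!} = \log\!\left(\frac{\pi z}{\sin(\pi z)}\right) + \log\!\left((q)_\infty R(\zeta;q)\right).
\]
The first summand on the right is independent of $q$. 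Identifying $\frac{\zeta^{1/2}}{\zeta-1}=\frac{1}{2i\sin(\pi z)}$, Lemma~\ref{lem:ExpB} yields $\log\bigl(\frac{\pi z}{\sin(\pi z)}\bigr)=-\sum_{k\geq2}\frac{B_k}{k}\frac{(2\pi iz)^k}{k!}$; comparing the coefficient of $\frac{(2\pi iz)^k}{k!}$ and halving, this contributes exactly the constant term $-\frac{B_k}{2k}$. It therefore remains to expand $\log((q)_\infty R(\zeta;q))$ to order $q^3$.

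Using \eqref{rmom} together with $R_0(q)=\frac{1}{(q)_\infty}$, I would write $(q)_\infty R(\zeta;q)=1+X$, where $X:=\sum_{k\geq1}(q)_\infty R_k(q)\frac{(2\pi iz)^k}{k!}$. The key observation is that every coefficient $(q)_\infty R_k(q)$ is divisible by $q^2$: by \eqref{eq:formula_for_Rk}, only the summand $n=1$ contributes below $q^4$ (the terms with $n\geq2$ start at $q^7$), and it equals $q(1-q)\sum_{m\geq1}m^kq^m=q^2+(2^k-1)q^3+O(q^4)$, so that $(q)_\infty R_k(q)=2q^2+2(2^k-1)q^3+O(q^4)$. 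Consequently $X=O(q^2)$, whence $\log(1+X)=X+O(q^4)$, and the coefficient of $\frac{(2\pi iz)^k}{k!}$ in $\log((q)_\infty R(\zeta;q))$ agrees with $(q)_\infty R_k(q)$ up to $O(q^4)$.

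Putting the two pieces together and comparing the coefficient of $\frac{(2\pi iz)^k}{k!}$ on both sides gives $2\bigl(f_k(\tau)+\frac{B_k}{2k}\bigr)=(q)_\infty R_k(q)+O(q^4)$, which rearranges to the asserted expansion $f_k(\tau)=-\frac{B_k}{2k}+q^2+(2^k-1)q^3+O(q^4)$. The only real content, and the one step to handle with care, is the vanishing of $(q)_\infty R_k(q)$ to order $q^2$ established above: this is what makes the nonlinear terms of $\log(1+X)$ irrelevant to order $q^3$ and collapses the computation to the single $n=1$ contribution in \eqref{eq:formula_for_Rk}; everything else is routine power-series bookkeeping.
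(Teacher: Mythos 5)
Your proposal is correct and follows essentially the same route as the paper: the paper invokes its equation \eqref{eq:fklog} (the logarithm of the defining relation \eqref{eq:f}, already established in the proof of Theorem~\ref{thm:integrality}) and observes that products of two or more factors $(q)_\infty R_{k_j}(q)=2q^2+2(2^k-1)q^3+O(q^4)$ are $O(q^4)$, exactly as you do by expanding $\log(1+X)$ and isolating the $n=1$ term of \eqref{eq:formula_for_Rk}. The only cosmetic difference is that you rederive the logarithmic identity and the $-\frac{B_k}{2k}$ constant from Lemma~\ref{lem:ExpB} inline rather than citing \eqref{eq:fklog}.
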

\begin{proof}
Recall that $(q)_\infty R_k(q)=0$ for $k$ odd. By \eqref{eq:formula_for_Rk}, we have for $k$ even
\[(q)_\infty R_k(q)=2q^2+2\left(2^{k}-1\right)q^3+O\left(q^4\right).\]
 Hence, we have $\prod_{j=1}^n(q)_\infty R_{k_j}(q)=O(q^4)$ if $n\geq 2$ and $k_j\in \mathbb{N}$ even. Therefore, by \eqref{eq:fklog},
\begin{align*}
2\sum_{k\geq 1}  \left(f_{k} (\tau)+\frac{B_k}{2k}\right) {z^{k}} &= 2\sum_{\substack{k\geq 2 \\ k \text { even}}}\!\left(q^2+\left(2^k-1\right)q^3\right) z^k + O\!\left(q^4\right). \qedhere
\end{align*}
\end{proof}
\begin{rem*}
Using a similar argument as in the proof of Propostion~\ref{prop:fknorm}, one can show that the $n$-th Fourier coefficient of $f_k$ is a linear combination of the $k$-th powers of $\{1,\ldots,n-1\}$. 
\end{rem*}

\section{Examples}
Here, we write down the first Fourier coefficients of the Eisenstein--type series~$f_k$. We have
\begin{align*}
 f_2(\tau) & =  -\frac{1}{24}+q^2+3 q^3+5 q^4+7 q^5+9 q^6+10 q^7+13 q^8+ O\!\left(q^9\right)\!, \\
 f_4(\tau) & =  \frac{1}{240}+q^2+15 q^3+59 q^4+139 q^5+255 q^6+406 q^7+595 q^8+O\!\left(q^9\right)\!, \\
 f_6(\tau) & =  -\frac{1}{504}+q^2+63 q^3+635 q^4+2827 q^5+8199 q^6+18550 q^7+36043 q^8+O\!\left(q^9\right)\!, \\
 f_8(\tau) & =  \frac{1}{480}+q^2+255 q^3+6179 q^4+53179 q^5+253815 q^6+844966 q^7+2234875 q^8+O\!\left(q^9\right)\!.
 \end{align*}
Some examples for Theorem~\ref{thm:invariant-space}~(1) are given by
\begin{align*}
D(f_2) &= -f_2 G_2-\frac{f_2^2}{2}-\frac{f_4}{12}+\frac{3 G_2^2}{2}+\frac{G_4}{12},\\
D(f_4) &= 6 f_2^2 G_2-18 f_2 G_2^2-f_2 G_4-f_4 G_2-\frac{2}{3}  f_2^3-\frac{7 f_4 f_2}{3}-\frac{f_6}{9}+18 G_2^3+3 G_2 G_4+\frac{G_6}{30},\\
D(f_6) &= -60 f_2^3 G_2+270 f_2^2 G_2^2+15 f_2^2 G_4-540 f_2 G_2^3+30 f_4 f_2 G_2-90 f_2 G_2 G_4-f_2 G_6\\
&\hspace{0.5cm}-45 f_4 G_2^2-f_6 G_2-\frac{5 f_4 G_4}{2}+5 f_2^4-5 f_4 f_2^2-\frac{11 f_6 f_2}{3}-\frac{25 f_4^2}{4}-\frac{f_8}{8}+405 G_2^4\\
&\hspace{0.5cm}+\frac{21855 G_4^2}{3652}+135 G_2^2 G_4+3 G_2 G_6-\frac{39 G_8}{51128}, \\
D(f_8) &= 840 f_2^4 G_2-5040 f_2^3 G_2^2-280 f_2^3 G_4+15120 f_2^2 G_2^3-840 f_4 f_2^2 G_2+2520 f_2^2 G_2 G_4\\
&\hspace{0.5cm}+28 f_2^2 G_6-22680 f_2 G_2^4+2520 f_4 f_2 G_2^2-\frac{305970}{913} f_2 G_4^2+56 f_6 f_2 G_2-7560 f_2 G_2^2 G_4\\
&\hspace{0.5cm}+140 f_4 f_2 G_4-168 f_2 G_2 G_6+\frac{39 f_2 G_8}{913}-2520 f_4 G_2^3-84 f_6 G_2^2+70 f_4^2 G_2-f_8 G_2 \\
&\hspace{0.5cm}-420 f_4 G_2 G_4-\frac{14 f_6 G_4}{3} -\frac{14 f_4 G_6}{3}-56 f_2^5+\frac{280}{3} f_4 f_2^3-\frac{28}{3} f_6 f_2^2-\frac{70}{3} f_4^2 f_2-5 f_8 f_2\\
&\hspace{0.5cm}-\frac{322 f_4 f_6}{9}-\frac{2 f_{10}}{15}+13608 G_2^5+\frac{917910}{913} G_2 G_4^2+7560 G_2^3 G_4+252 G_2^2 G_6\\
&\hspace{0.5cm}+\frac{19352886 G_4 G_6}{1983949}+\frac{36751 G_{10}}{1803590}-\frac{117 G_2 G_8}{913}.
\end{align*}

\section{Questions for future research}\label{sec:?}
 We end by raising some open questions.
\begin{enumerate}[leftmargin=*]
	\item The three properties of the~$f_k$ given in Theorem~\ref{thm:main} do not determine them uniquely. Therefore, we provide two recursive definitions of~$f_k$ in Theorem~\ref{T:Recursion}. It would be interesting to find ``nice'' properties that define an Eisenstein--type series uniquely. For example, the fact that $G_k$ is a normalized Hecke eigenform of weight $k$ which does not vanish at $i\infty$ determines it uniquely. As the coefficient of~$q$ in the Fourier expansion of the $f_k$ vanishes by Proposition~\ref{prop:fknorm}, one deduces that~$f_k$ cannot be a Hecke eigenform with respect to the usual action of the Hecke operators on $q$-series. 
	\item\label{q:2} The functions $f_2,f_4,f_6,\ldots$ together with $G_2,G_4$, and $G_6$, do not seem to satisfy any algebraic relations. This has been verified numerically up to weight 24 (and also up to mixed weight 12). Is it indeed the case that the algebra $\mathcal{F}$ is free?
	\item What variations of the $g_\ell$, in particular in the range of summation, are also mock modular? More concretely, for which $a, b\in\N$ is the function
	\[
	 \sum_{\substack{an-1\geq bm\geq b}} (an-bm)^{\ell-1}  q^{nm}-\sum_{\substack{n-1\geq abm\geq ab}}(n-abm)^{\ell-1}  q^{nm}
	\]
	an Eisenstein--type series? Note that $a=2$ and $b=3$ yields the function $g_\ell$ in this paper. Are other choices of $a$ and $b$ also of particular interest? 
	Do these functions, together with those in \cite[p.15]{Zag09} and \cite[equation~(1.4)]{MOR21}, form the first examples of a theory of (higher level)  Eisenstein--type series?
\end{enumerate}


\begin{thebibliography}{99}
	\bibitem{Traces} T. Amdeberhan, M. Griffin, K. Ono, and A. Singh, \emph{Traces of partition Eisenstein series}, Forum Math., published online (2025).
	\bibitem{AOS2024} T. Amdeberhan, K. Ono, and A. Singh, \emph{Derivatives of theta functions as traces of partition Eisenstein series}, Nagoya Math. J., published online (2025).
	\bibitem{AG} A. Atkin and F. Garvan, \emph{Relations between the ranks and cranks of partitions}, Ramanujan J. \textbf{3} (2003), 343--366.
	\bibitem{AG88} G. Andrews and F. Garvan, \emph{Dyson’s crank of a partition}, Bull. Amer. Math. Soc. \textbf{18} (1988), 167--171.
	\bibitem{AS} A. Atkin and P. Swinnerton-Dyer, \emph{Some properties of partitions}, Proc. Lond. Math. Soc. (3) \textbf{4} (1954), 84--106.
	\bibitem{B2018} K. Bringmann, \emph{Taylor coefficients of non-holomorphic Jacobi forms and applications}, Res. Math. Sci. \textbf{5}, No. 15 (2018).
	\bibitem{BFOR} K. Bringmann, A. Folsom, K. Ono, and L. Rolen, \emph{Harmonic Maass forms and mock modular forms: theory and applications}, Amer. Math. Soc. Colloq. Publ. \textbf{64} American Mathematical Society, Providence, RI, 2017. xv+391 pp.
	\bibitem{BGM} K. Bringmann, F. Garvan, and K. Mahlburg, \emph{Partition statistics and quasimodular Maass forms}, Int. Math. Res. Not. {\bf1} (2009), 63--97.
	\bibitem{BZ} K. Bringmann and S. Zwegers, \emph{Rank-crank type PDE's and non-holomorphic Jacobi forms}, Math. Res. Lett. \textbf{17} (2010), 589--600.	
	\bibitem{DMZ}  A. Dabholkar, S. Murthy, and D. Zagier, \emph{Quantum black holes, wall crossing, and mock modular forms}, to appear in Cambridge Monographs in Mathematical Physics. 	
	\bibitem{Dys} F. Dyson, \emph{Some guesses in the theory of partitions}, Eureka (Cambridge) \textbf{8} (1944), 10--15.
	\bibitem{EOYT} T. Eguchi, H. Ooguri, S-K. Yang, and A. Taormina, \emph{Superconformal algebras and string compactification on manifolds with $\mathrm{SU}(n)$ Holonomy}, Nucl. Phys. B {\bf 315:1} (1989), 193--221
\bibitem{ET} T. Eguchi and A. Taormina, \emph{On the unitary representation of $N = 2$ and $N = 4$ superconformal algebras}, Phys.~Lett.~B {\bf 210:1-2} (1988),125--132
	\bibitem{EZ1985} M. Eichler and D. Zagier, \emph{The theory of Jacobi forms}, Progr. Math. \textbf{55}, Birkhäuser Boston, Inc., Boston, MA, 1985. v+148 pp.
	\bibitem{Ga} F. Garvan, \emph{New combinatorial interpretation of Ramanujan's partition congruences mod $5$, $7$, and $11$}, Trans. Amer. Math. Soc. \textbf{305} (1988), 47--77.
	\bibitem{Hori2003}
K.~Hori, S.~Katz, A.~Klemm, R.~Pandharipande, R.~Thomas, C.~Vafa, 
R.~Vakil, and E.~Zaslow, 
\emph{Mirror Symmetry}, 
Clay Mathematics Monographs, Vol.~1, 
American Mathematical Society, Providence, RI, 2003.
	\bibitem{HosonoSaitoTakahashi1999}
S.~Hosono, M.\nobreakdash.-H.~Saito, and A.~Takahashi,
\emph{Holomorphic Anomaly Equation and BPS State Counting of Rational Elliptic Surface},
Adv.\ Theor.\ Math.\ Phys.\ \textbf{3} (1999), no.~1, 177–208.
	\bibitem{Manschot2019}
J.~Manschot,
\emph{Vafa–Witten Theory and Iterated Integrals of Modular Forms},
Commun.\ Math.\ Phys.\ \textbf{371} (2019), 787--831.

	\bibitem{MOR21} M. Mertens, K. Ono, and L. Rolen, \emph{Mock modular Eisenstein series with Nebentypus}, Int. J. Number Theory \textbf{17} (2021), no. 3, 683--697.
	\bibitem{Oberdieck2022}
G.~Oberdieck,
\emph{Lectures on Holomorphic Anomaly Equations (for the workshop Geometric Aspects of the Swampland 2022, CFTMAT, Madrid)},
Lecture notes, May 2022.

	\bibitem{Sko} N. Skoruppa, \emph{A quick combinatorial proof of Eisenstein series identities}, Journal of Number Theory \textbf{43} (1993), 68--73.
	\bibitem{S1999} R. Stanley, \emph{Enumerative combinatorics. Vol. 2}, Cambridge Stud. Adv. Math. \textbf{62}, Cambridge Univ. Press, Cambridge (1999).
	\bibitem{Zag91} D. Zagier, \emph{Periods of modular forms and Jacobi theta functions}, Invent. Math. \textbf{104}, 449--465 (1991).
	\bibitem{Zag123} D. Zagier, \emph{Elliptic modular forms and their applications}. In {\em The 1-2-3 of modular forms},
  Universitext, 1--103, Springer, Berlin, 2008.
  \bibitem{Zag09} D. Zagier, \emph{Ramanujan's mock theta functions and their applications (after Zwegers and Ono-Bringmann)}, Séminaire Bourbaki. Vol. 2007/2008
Astérisque No. 326 (2009), Exp. No. 986, vii--viii, 143--164.
	\bibitem{Zw} S. Zwegers, \emph{Mock theta functions}, Ph.D. thesis, Universiteit Utrecht, 2002.
\end{thebibliography}
\end{document}